\chardef\bslash=`\\
\newtheorem{theorem}[subsection]{Theorem}
\newtheorem{corollary}[subsection]{Corollary}
\newtheorem{lemma}[subsection]{Lemma}
\newtheorem{lem}[subsection]{Lemma}
\newtheorem{cor}[subsection]{Corollary}
\newtheorem{prop}[subsection]{Proposition}
\newtheorem{proposition}[subsection]{Proposition}
\newtheorem{defn}[subsection]{Definition}
\theoremstyle{remark}
\newtheorem{remark}[subsection]{Remark}
\newtheorem{rem}[subsection]{Remark}
\numberwithin{equation}{section}
\newif\iffinalrun
  \newcommand{\need}[1]{}
  \newcommand{\mar}[1]{}
  \newcommand{\need}[1]{{\tiny *** #1}}
  \newcommand{\mar}[1]{\marginpar{\raggedright\tiny FIXME:  #1}}\fi
\renewcommand\mathbb{\mathbf}
\newcommand{\gog}{{\mathfrak{g}}}
\newcommand{\wt}{{\operatorname{wt}}}
\newcommand{\rec}{{\operatorname{rec}}}
\newcommand{\rbar}{\overline{r}}
\newcommand{\wv}{{\widetilde{v}}}
\newcommand{\ww}{{\widetilde{w}}}
\renewcommand{\ell}{l}
\def\PSL{\mathrm{PSL}}
\def\PGL{\mathrm{PGL}}
\newcommand{\ad}{\operatorname{ad}}
\newcommand{\tr}{\operatorname{tr}}
\newcommand{\A}{\mathbf{A}}
\newcommand{\bA}{\ensuremath{\mathbf{A}}}
\newcommand{\bC}{\ensuremath{\mathbf{C}}}
\newcommand{\Cp}{\ensuremath{\mathbf{C}_p}}
\newcommand{\bF}{\ensuremath{\mathbf{F}}}
\newcommand{\bG}{\ensuremath{\mathbf{G}}}
\newcommand{\bQ}{\ensuremath{\mathbf{Q}}}
\newcommand{\Q}{\QQ}
\newcommand{\QQ}{{\mathbb Q}}
\newcommand{\bT}{\ensuremath{\mathbf{T}}}
\newcommand{\Z}{\ZZ}
\newcommand{\ZZ}{{\mathbb Z}}
\newcommand{\bZ}{\ensuremath{\mathbf{Z}}}
\newcommand{\cA}{{\mathcal A}}
\newcommand{\cC}{{\mathcal C}}
\newcommand{\cD}{{\mathcal D}}
\newcommand{\cE}{{\mathcal E}}
\newcommand{\cF}{{\mathcal F}}
\newcommand{\cL}{{\mathcal L}}
\newcommand{\cO}{{\mathcal O}}
\newcommand{\cR}{{\mathcal R}}
\newcommand{\cS}{{\mathcal S}}
\newcommand{\cT}{{\mathcal T}}
\newcommand{\cW}{{\mathcal W}}
\newcommand{\cZ}{{\mathcal{Z}}}
\newcommand{\cX}{{\mathcal{X}}}
\newcommand{\Qbar}{\overline{\Q}}
\newcommand{\Zbar}{\overline{\Z}}
\newcommand{\Zpbar}{\Zbar_p}
\newcommand{\Zpbarx}{\Zpbar^{\times}}
\newcommand{\Qp}{\Q_p}
\newcommand{\Qpbar}{\Qbar_p}
\DeclareMathOperator{\End}{End}
\DeclareMathOperator{\Fil}{Fil}
\DeclareMathOperator{\Gal}{Gal}
\newcommand{\GL}{\mathrm{GL}}
\DeclareMathOperator{\Hom}{Hom}
\DeclareMathOperator{\Ind}{Ind}
\DeclareMathOperator{\SL}{SL}
\DeclareMathOperator{\Spec}{Spec}
\DeclareMathOperator{\Sym}{Symm}
\DeclareMathOperator{\Symm}{Symm}
\newcommand{\Frob}{\mathrm{Frob}}
\newcommand{\toisom}{\buildrel\sim\over\to}
\newcommand{\Art}{{\operatorname{Art}}}
\newcommand{\Res}{\operatorname{Res}}
\newcommand{\univ}{\mathrm{univ}}
\newcommand{\doubleslash}{/\kern-0.2em{/}}
\begin{document}
\author{Laurent Clozel,  James Newton, and Jack  A. Thorne}
\title[Non-abelian base change]{Non-abelian base change for symmetric power liftings of  holomorphic modular forms}
\begin{abstract}
Let $f$ be a non-CM Hecke eigenform of weight $k \geq 2$. We give a new proof of some cases of Langlands functoriality for the automorphic representation $\pi$ associated to $f$. More precisely, we prove the existence of the base change lifting, with respect to any totally real extension $F / \bQ$, of any symmetric power lifting of $\pi$. 
\end{abstract}
\maketitle
\setcounter{tocdepth}{1}
\tableofcontents

\section{Introduction}
	
	Let $F$ be a totally real number field. In this paper we prove the following theorem:
	\begin{theorem}\label{intro_thm_existence_of_base_change}
Let $\pi$ be a cuspidal, regular algebraic automorphic representation of $\GL_2(\bA_\bQ)$ without CM, and let $n \geq 1$. Then the base change of the symmetric power $\Sym^{n-1} \pi$ to $F$ exists. More precisely, there exists a cuspidal, regular algebraic automorphic representation $\mathrm{BC}_{F / \bQ}(\Sym^{n-1} \pi)$ of $\GL_n(\bA_F)$ such that for any place $w$ of $F$ lying above the place $v$ of $\bQ$, we have
\[ \rec_{F_w}(\mathrm{BC}_{F / \bQ}(\Sym^{n-1} \pi)_w) \cong \Symm^{n-1}\rec_{\bQ_{v}}(\pi_{v})|_{W_{F_w}}. \]
	\end{theorem}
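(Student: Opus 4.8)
The plan is to reduce the theorem to an automorphy assertion for a Galois representation of $G_F$, and then to prove that assertion by carrying the strategy of Newton--Thorne (which handles the case $F=\bQ$) over to a general totally real field, routed through a CM quadratic extension.

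\emph{Reduction to Galois.} Fix a prime $\ell$ and an isomorphism $\iota\colon \overline{\bQ}_\ell \toisom \bC$, and let $r_{\pi,\iota}\colon G_\bQ \to \GL_2(\overline{\bQ}_\ell)$ be the Galois representation attached to $\pi$. It suffices to prove that $R := \Symm^{n-1} r_{\pi,\iota}|_{G_F}$ is automorphic over $F$, i.e.\ that there is a cuspidal regular algebraic automorphic representation $\Pi$ of $\GL_n(\bA_F)$ with $\rec_{F_w}(\Pi_w) \cong \WD\bigl(R|_{G_{F_w}}\bigr)$ for every finite place $w$. Granting this, local--global compatibility for $\pi$ and the definition of $r_{\pi,\iota}$ give, at each finite $w\mid v$,
\[ \rec_{F_w}(\Pi_w) \cong \WD\bigl(\Symm^{n-1} r_{\pi,\iota}|_{G_{F_w}}\bigr) \cong \Symm^{n-1} \rec_{\bQ_v}(\pi_v)|_{W_{F_w}}; \]
regularity of $R$ (valid since $\pi$ has weight $\geq 2$, so the Hodge--Tate weights of $\Symm^{n-1} r_{\pi,\iota}$ are distinct and remain so after restriction) fixes the archimedean components, and $\Pi$ is cuspidal because $R$ is irreducible: $\pi$ is non-CM, and CM is inherited under totally real base change, so the Zariski closure of $r_{\pi,\iota}(G_F)$ still contains $\SL_2$ and its $(n-1)$st symmetric power is irreducible. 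We then put $\mathrm{BC}_{F/\bQ}(\Symm^{n-1}\pi) := \Pi$.

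\emph{Strategy.} The representation $R$ is irreducible, odd, regular, essentially self-dual (as a symmetric power of a $2$-dimensional representation), de Rham, and part of a compatible system, so it is a candidate for automorphy. Following Newton--Thorne, I would prove this by passing to a CM quadratic extension $E/F$ (take $E = FK$ for an imaginary quadratic $K$), proving automorphy of $R|_{G_E}$ over $E$ using the automorphy lifting and potential automorphy theorems over CM fields of Allen, Calegari, Caraiani, Gee, Helm, Le Hung, Newton, Scholze, Taylor and Thorne, and then descending: since $E/F$ is quadratic and $R|_{G_E}$ is conjugate self-dual and extends to $G_F$, the associated $\Gal(E/F)$-stable cuspidal automorphic representation of $\GL_n(\bA_E)$ descends to a cuspidal representation of $\GL_n(\bA_F)$ (Clozel--Labesse-type descent, with essential self-duality over $F$ fixing the descended parameter up to quadratic twist, which is then pinned down by Hodge--Tate weights). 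The automorphy lifting arguments over $E$ need automorphic starting points over $E$, ultimately base changes of classical newforms over $\bQ$; here one must note that $E/\bQ$ is itself non-solvable whenever $F/\bQ$ is, so Arthur--Clozel base change is unavailable. Instead one uses that the residual representations of a $\GL_2(\bA_\bQ)$-newform at small primes have solvable image, hence are modular over any number field (Langlands--Tunnell via cyclic base change, which requires solvable image, not a solvable extension), after which $\GL_2$ modularity lifting over $E$ promotes this to modularity over $E$ of the corresponding $\ell$-adic representations.

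\emph{Induction, and the main obstacle.} With these inputs I would run the Newton--Thorne induction on $n$ over $E$: the base case is the modularity over $E$ just described, and granting automorphy over $E$ of $\Symm^{m-1}(r_{\pi,\iota}|_{G_E})$ for $m<n$, the same level-changing and congruence constructions --- now with Hilbert and quaternionic modular forms over $F$ and unitary Shimura varieties over $E$ in place of their rational counterparts --- produce a cuspidal automorphic representation of $\GL_n(\bA_E)$ realizing $R|_{G_E}$, with the flexible CM-field automorphy lifting theorems absorbing the small residual images ($\Symm^{n-1}$ of $\SL_2(\FF_\ell)$, and residually reducible pieces) that occur; descending $E\to F$ yields $\Pi$, whose compatible system is $(\Symm^{n-1} r_{\pi,\iota}|_{G_F})_\iota$ by construction, so strong multiplicity one characterizes it. I expect the essential difficulty to be exactly this transplantation: re-doing the construction of the auxiliary automorphic representations and re-checking the Taylor--Wiles--Kisin inputs --- adequate/``enormous'' image, vanishing of the relevant Galois cohomology, existence of Taylor--Wiles primes --- after restriction from $G_\bQ$ to $G_E$, which holds for all but finitely many $\ell$ but needs care with linear disjointness (e.g.\ ensuring $\overline{r}_{\pi,\ell}(G_E) \supseteq \SL_2(\FF_\ell)$, which holds once $\PSL_2(\FF_\ell)$ is not a subquotient of $\Gal(\widetilde{E}/\bQ)$), and keeping cuspidality under control throughout. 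Crucially, the non-solvable extension $F/\bQ$ is never descended through: automorphy is produced directly over $F$ (via $E$), and Arthur--Clozel base change enters only for the solvable extensions $E/F$ and in manufacturing the solvable-image inputs.
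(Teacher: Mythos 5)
Your proposal takes a genuinely different route from the paper, and it contains a gap at its core. You propose to pass to a CM quadratic extension $E=FK$ of $F$ and then ``run the Newton--Thorne induction on $n$ over $E$,'' generating the $\GL_2$ base case from residually solvable primes (Langlands--Tunnell plus modularity lifting over $E$) and invoking the CM-field automorphy lifting/potential automorphy machinery of Allen et al.\ for the inductive step. But the Newton--Thorne induction over $\bQ$ hinges on a non-portable ingredient: the Buzzard--Kilford explicit description of the $2$-adic, tame level $1$ eigencurve, which drives the ``ping-pong'' argument that gets the level-$1$ base case off the ground. No analogue of this is available over $E$ or $F$, and your discussion of the ``transplantation'' difficulty does not mention it --- you focus on big-image and Taylor--Wiles conditions after restriction to $G_E$, which are comparatively routine, while the actual bottleneck is the absence of any comparable explicit control of a Coleman--Mazur-type eigenvariety over a general totally real or CM field. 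Re-proving Newton--Thorne wholesale over $F$ or $E$ is precisely the problem the paper is engineered to avoid.

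The paper's solution is structurally different and worth contrasting. It does not run the analytic continuation over $E$: the \emph{source} eigenvariety stays over $\bQ$ (a definite $U(2)$ attached to a CM field $K$ over which $\pi$ is base-changed solubly), so the Buzzard--Kilford input is still available; only the \emph{target} is an eigenvariety for a definite $U(n)$ over a CM extension $L=KL^+$ containing $F$ (the map $\sigma_{n,L}=\sigma_{n,L,\Gal}\times\sigma_{n,p,L}$ in the text). The new classicality result (Proposition \ref{prop:noncritical implies classical}) and the $(n,F)$-regularity condition make this work at a general $F$ where $p$ need not split. Furthermore, there is no induction on $n$: the paper fixes $n$ and inducts on ramification (the statements $(H_N)$) and on the set of supercuspidal places; the only $n$-dependency is that the supercuspidal case for $n>2$ appeals to the already-established $n=2$ case. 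Finally, rather than manufacturing an automorphic starting point via potential automorphy over CM fields as you suggest, the paper produces it from a CM eigenform via Thorne's level-raising theorem (\cite[Theorem 9.1]{Tho22}) combined with Theorem \ref{thm_congruence_to_n_regular_liftings} (a patching argument producing $M$-regular congruences), and then propagates automorphy using classical over-$\bQ$ automorphy lifting theorems (\cite[Theorem 4.2.1]{BLGGT}, Kisin) whose hypotheses remain valid after restriction to $G_F$ by Lemma \ref{lem_large_image_over_F}. Your proposal, as written, neither replaces the Buzzard--Kilford input nor offers a mechanism of comparable strength, so the inductive step over $E$ does not close.
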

More informally, we prove the existence of certain functorial liftings for Hecke eigenforms of weight $k \geq 2$ over $\bQ$; the functoriality being the composite of `$(n-1)$th symmetric power' and `base change with respect to $F / \bQ$'. 

The case $n = 2$ of Theorem \ref{intro_thm_existence_of_base_change}, which is just base change, was established by Dieulefait using modularity lifting theorems and the notion of a `safe chain' of congruences between modular forms \cite{Dieulefait-bc,Die15}. The general case of Theorem \ref{intro_thm_existence_of_base_change} then follows using the results of two of us on symmetric power functoriality for Hilbert modular forms \cite{NT2022symmetric}. However, in this paper we give new proofs of both base change and the existence of the symmetric power lifting for the base changed eigenform. Our strategy is based on the papers \cite{New21a, New21b}, which handle the case $F = \bQ$. We hope that this work (a first draft of which was written before \cite{NT2022symmetric}) is still of interest.

The case $F = \bQ$ was established in the papers \cite{New21a, New21b} using the following two principles as the primary inputs:
\begin{itemize}
\item An `analytic continuation of functoriality' principle, that asserts that the existence of a functorial lifting can be propagated along irreducible components of the eigencurve. This is particularly effective in light of the description by Buzzard--Kilford \cite{Buz05} of a large part of the 2-adic, tame level 1 eigencurve. 
\item A `functoriality lifting theorem', giving robust hypotheses under which a congruence $\overline{r}_{\pi, \iota} \cong \overline{r}_{\pi', \iota}$ of mod $p$ residual representations attached to (cuspidal, regular algebraic) automorphic representations of $\GL_2(\bA_F)$ implies the equivalence of the existence of the liftings $\Sym^{n-1} \pi$ and $\Sym^{n-1} \pi'$.
\end{itemize}
Here we extend these ideas in order to prove Theorem \ref{intro_thm_existence_of_base_change}. Where possible, we strengthen the arguments used in \cite{New21a, New21b}, with one eye towards potential future applications of a similar type. In particular, we prove a more general classicality result (Proposition \ref{prop:noncritical implies classical}), and replace the appeal to the very intricate level-raising results of \cite{New21a} with an appeal to the more generally applicable level-raising theorems proved in \cite{Tho22}. This gives a streamlined proof even in the case $F= \Q$. 

\subsection*{Notation}
We follow the notation and conventions established in the notation section of \cite[pp.~6-9]{New21a}.  The following table gives a list of symbols used with their meanings. We refer to \emph{loc.~cit.} for precise definitions.

	\begin{tabular}{|p{0.2\textwidth}|p{0.7\textwidth}|}
		\hline
		Symbol &  Meaning \\ \hline
		$G_F$ & Absolute Galois group of a field $F$ of characteristic 0 \\ \hline
		$F_v$, $\cO_{F_v}$, $\varpi_v$ &  Completion of number field $F$ at finite place $v$, ring of integers, fixed choice of uniformizer \\  \hline
		$F_S / F$, $G_{F, S}$& Maximal extension of number field $F$ unramified outside finite set $S$, $\Gal(F_S / F)$ \\ \hline
		$E, \cO, \varpi, k$ &  Finite extension of $\bQ_p$ with ring of integers, uniformizer, residue field \\ \hline
		$\cC_\cO$ & Category of complete Noetherian local $\cO$-algebras with residue field $k$ \\ \hline
		$W_K, I_K, \Art_K$ & Weil group, inertia group, Artin map of $p$-adic local field $K$ \\ \hline
		$\rec_K$ & Local Langlands correspondence for $\GL_n(K)$  \\ \hline
		$r_{\pi, \iota}$ & $p$-adic Galois representation associated to a regular algebraic, cuspidal, polarizable automorphic representation of $\GL_n(\A_F)$, $F$ a CM or totally real number field, and $\iota : \overline{\bQ}_p \to \bC$ an isomorphism \\ \hline
	\end{tabular}
\vskip 10pt
	\subsection*{Acknowledgements}

J.N. was supported by a UKRI Future Leaders Fellowship, grant MR/V021931/1. J.T.’s work received funding from the European Research Council (ERC) under the European Union’s Horizon 2020 research and innovation programme (grant agreement No 714405). For the purpose of Open Access, the authors have applied a CC BY public copyright licence to any Author Accepted Manuscript (AAM) version arising from this submission.

\section{Preliminaries}\label{sec_preliminaries}

We first remark  that  if $\pi$ is a cuspidal, regular algebraic automorphic representation of $\GL_2(\bA_\bQ)$ without CM, $n \geq 2$ is an integer, $p$ is a prime, and $\iota : \overline{\bQ}_p \to \bC$ is an isomorphism, then $\Sym^{n-1} r_{\pi, \iota}|_{G_F}$ is irreducible. This explains why the base change of $\Sym^{n-1} \pi$ to a totally real field $F$ can always be expected to be \emph{cuspidal}. Indeed, if $\pi$ is not of CM type, then the Zariski closure of $r_{\pi,  \iota}(G_{\bQ})$ contains $\SL_2$, implying that in fact any finite index  subgroup of $G_\bQ$ acts irreducibly in any symmetric power of the standard representation. This is quite useful: 
\begin{lemma}\label{lem_OK_after_base_change}
Let $\pi$ be a cuspidal, regular algebraic automorphic representation of $\GL_2(\bA_\bQ)$, let $n \geq 1$, and let $F' / F$ be a (Galois) soluble extension of totally real number fields. Then the following are equivalent:
\begin{enumerate}
\item There exists a polarizable
automorphic representation $\Pi'$ of $\GL_n(\bA_{F'})$, a prime number $p$, and an isomorphism $\iota : \overline{\bQ}_p \to \bC$ such that $r_{\Pi', \iota} \cong \Sym^{n-1} r_{\pi, \iota}|_{G_{F'}}$.
\item There exists a polarizable automorphic representation $\Pi$ of $\GL_n(\bA_F)$ such that for any prime number $p$ and isomorphism $\iota : \overline{\bQ}_p \to \bC$, we have $r_{\Pi, \iota} \cong \Sym^{n-1} r_{\pi, \iota}|_{G_F}$.
\end{enumerate}
\end{lemma}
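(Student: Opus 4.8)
The plan is to reduce both implications to solvable base change and solvable descent for $\GL_n$, in the form established by Arthur and Clozel. The implication (2)$\,\Rightarrow\,$(1) is the easier one: given $\Pi$ as in (2), I would set $\Pi'=\mathrm{BC}_{F'/F}(\Pi)$, the solvable base change obtained by iterating cyclic base change along a filtration of $\Gal(F'/F)$ by normal subgroups with quotients cyclic of prime order. Local--global compatibility at unramified places together with the Chebotarev density theorem give $r_{\Pi',\iota}\cong r_{\Pi,\iota}|_{G_{F'}}\cong\Sym^{n-1}r_{\pi,\iota}|_{G_{F'}}$, which is irreducible by the remark at the start of this section; hence $\Pi'$ is cuspidal, and it is regular algebraic and polarizable because these properties pass through (solvable) base change. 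This gives (1), in fact for the chosen $(p,\iota)$.

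For the implication (1)$\,\Rightarrow\,$(2), I would first filter $\Gal(F'/F)$ as above to reduce to the case $[F':F]=\ell$ a prime, $\Gal(F'/F)=\langle\sigma\rangle$, and then iterate down the resulting tower --- every field in the tower is totally real, being a subfield of $F'$, and after each descent step one again has an automorphic representation satisfying the Galois-theoretic hypothesis of (1), ready to feed the next step. So suppose $F'/F$ is cyclic of prime degree. Because $\Sym^{n-1}r_{\pi,\iota}|_{G_{F'}}$ is the restriction of a representation of $G_F$, we have $r_{\Pi',\iota}^{\sigma}\cong r_{\Pi',\iota}$, and therefore $\Pi'\cong\Pi'^{\sigma}$, since a regular algebraic, cuspidal, polarizable automorphic representation of $\GL_n$ is determined up to isomorphism by its associated Galois representation (local--global compatibility and strong multiplicity one). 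By the descent half of cyclic base change there is then a cuspidal automorphic representation $\Pi$ of $\GL_n(\bA_F)$ with $\mathrm{BC}_{F'/F}(\Pi)\cong\Pi'$; it is regular algebraic, as one reads off the archimedean components. Since $r_{\Pi,\iota}|_{G_{F'}}\cong\Sym^{n-1}r_{\pi,\iota}|_{G_{F'}}$ and the right-hand side is irreducible, Frobenius reciprocity produces a character $\delta$ of $\Gal(F'/F)$ with $r_{\Pi,\iota}\cong(\Sym^{n-1}r_{\pi,\iota}|_{G_F})\otimes\delta$. Replacing $\Pi$ by $\Pi\otimes(\mu^{-1}\circ\det)$, where $\mu$ is the finite-order Hecke character of $\bA_F^{\times}/F^{\times}$ corresponding to $\delta$ under class field theory --- automatically trivial on norms from $F'$, so that the twist leaves $\mathrm{BC}_{F'/F}(\Pi)$ unchanged --- I may assume $r_{\Pi,\iota}\cong\Sym^{n-1}r_{\pi,\iota}|_{G_F}$, and then $\Pi$ is polarizable, because $\Sym^{n-1}r_{\pi,\iota}$ is essentially self-dual of the relevant type and this can be checked on the Galois representation.

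It then remains to upgrade $r_{\Pi,\iota}\cong\Sym^{n-1}r_{\pi,\iota}|_{G_F}$ from the chosen $\iota$ to every isomorphism $\iota'\colon\overline{\bQ}_{p'}\to\bC$. For all but finitely many finite places $w$ of $F$, lying over $v$ of $\bQ$, the characteristic polynomials of $r_{\Pi,\iota}(\Frob_w)$ and of $(\Sym^{n-1}r_{\pi,\iota}|_{G_F})(\Frob_w)$ are the images under $\iota^{-1}$ of polynomials with coefficients in $\overline{\bQ}$ --- formed respectively from the Satake parameters of $\Pi_w$, and from the Satake parameters of $\pi_v$ together with $q_v$ --- the algebraicity of these Satake parameters being a standard consequence of regular algebraicity. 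As $r_{\Pi,\iota}\cong\Sym^{n-1}r_{\pi,\iota}|_{G_F}$, the two polynomials over $\overline{\bQ}$ agree after applying $\iota^{-1}$, hence agree; applying $\iota'^{-1}$ instead and invoking Chebotarev together with the Brauer--Nesbitt theorem gives $r_{\Pi,\iota'}^{\mathrm{ss}}\cong(\Sym^{n-1}r_{\pi,\iota'}|_{G_F})^{\mathrm{ss}}$, and as the latter is irreducible we conclude $r_{\Pi,\iota'}\cong\Sym^{n-1}r_{\pi,\iota'}|_{G_F}$, which completes (2).

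I do not expect a genuine obstacle here: the statement is a repackaging of Arthur--Clozel solvable base change and descent. The two points calling for care are the bookkeeping of the character-twist ambiguity in the descent (controlled by class field theory, using that the ambiguity is by characters of $\Gal(F'/F)$, which are trivial on norms and hence invisible to $\mathrm{BC}_{F'/F}$) and the passage from one $(p,\iota)$ to all of them (controlled by the algebraicity of the Satake parameters of regular algebraic cuspidal representations, together with Chebotarev and Brauer--Nesbitt).
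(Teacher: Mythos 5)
Your proof is correct and follows the same route as the paper, which simply cites soluble base change/descent for $\GL_n$ (Arthur--Clozel, packaged as \cite[Lemma 1.3]{blght}) together with the irreducibility of $\Sym^{n-1}r_{\pi,\iota}|_{G_{F'}}$. What you have written is, in effect, a re-derivation of that cited lemma — the reduction to cyclic prime degree, the twist ambiguity resolved by a character of $\Gal(F'/F)$ trivial on norms, and the passage from one $(p,\iota)$ to all via algebraicity of Satake parameters, Chebotarev, and Brauer--Nesbitt — rather than a different argument.
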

\begin{proof}
This follows from the irreducibility of $\Sym^{n-1} r_{\pi, \iota}|_{G_{F'}}$ and soluble base change for $\GL_n$ (see \cite{Art89}; a convenient reference for us is \cite[Lemma 1.3]{blght}).
\end{proof}
\begin{lemma}
Let $F$ be a totally real field, and let $\pi$ be cuspidal, regular algebraic automorphic representation of $\GL_2(\bA_F)$ without CM. Let $n \geq 1$, let $p$ be a prime number, and let $\iota : \overline{\bQ}_p \to \bC$ be an isomorphism. Then the following are equivalent:
\begin{enumerate}
\item $\Symm^{n-1} \pi$ exists, in the sense that there is a cuspidal automorphic representation $\Pi = \Symm^{n-1} \pi$ of $\GL_n(\bA_F)$ such that for every place $v$ of $F$, $\rec_{F_v}(\Pi_v) \cong \Symm^{n-1} \, \rec_{F_v} \pi_v$. $\Pi$ is then polarizable.
\item There exists a polarizable automorphic representation $\Pi$ of $\GL_n(\bA_F)$ such that $r_{\Pi, \iota} \cong \Symm^{n-1}  r_{\pi, \iota}$.
\end{enumerate}
\end{lemma}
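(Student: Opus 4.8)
The plan is to derive the equivalence from local--global compatibility, which relates the local parameters $\rec_{F_v}(\Pi_v)$ to the restrictions $r_{\Pi,\iota}|_{G_{F_v}}$.

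The implication $(1)\Rightarrow(2)$ is the easy direction. Given $\Pi=\Symm^{n-1}\pi$ as in $(1)$, I would first observe that $\Pi$ is regular algebraic (comparing archimedean $L$-parameters, $\Symm^{n-1}\rec_{F_v}(\pi_v)$ is regular since $\pi$ has cohomological weight $\geq 2$) and polarizable ($\Symm^{n-1}$ of a $2$-dimensional parameter is essentially self-dual, so $\rec_{F_v}(\Pi_v)^\vee\cong\rec_{F_v}(\Pi_v)\otimes(\chi_v\circ\det)$ for a compatible system of characters, whence $\Pi^\vee\cong\Pi\otimes(\chi\circ\det)$ for a Hecke character $\chi$); hence $r_{\Pi,\iota}$ is defined. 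Local--global compatibility for $\pi$ and for $\Pi$ at the unramified finite places, together with the compatibility of the Satake isomorphism with $\Symm^{n-1}$, shows that $r_{\Pi,\iota}$ and $\Symm^{n-1}r_{\pi,\iota}$ have the same characteristic polynomials of Frobenius on a set of places of density one. Since $\pi$ has no CM, $\Symm^{n-1}r_{\pi,\iota}$ is irreducible, so the Brauer--Nesbitt theorem and Chebotarev give $r_{\Pi,\iota}^{\semi}\cong\Symm^{n-1}r_{\pi,\iota}$, and irreducibility of the right-hand side upgrades this to $r_{\Pi,\iota}\cong\Symm^{n-1}r_{\pi,\iota}$, which is $(2)$.

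For $(2)\Rightarrow(1)$, the substantive direction, I would take the given $\Pi$ --- regular algebraic, cuspidal, polarizable, with $r_{\Pi,\iota}\cong\Symm^{n-1}r_{\pi,\iota}$ --- and show $\rec_{F_v}(\Pi_v)\cong\Symm^{n-1}\rec_{F_v}(\pi_v)$ at \emph{every} place $v$ of $F$. At a finite place $v\nmid p$, full local--global compatibility for $\GL_n$ (Harris--Taylor, Taylor--Yoshida, Shin, Chenevier--Harris, Caraiani) and for $\GL_2$ (Carayol) identifies both sides, up to the standard twist, with $\WD(r_{\Pi,\iota}|_{G_{F_v}})^{\Fss}$ and with $\Symm^{n-1}\bigl(\WD(r_{\pi,\iota}|_{G_{F_v}})^{\Fss}\bigr)$ respectively; these agree because the Weil--Deligne functor and Frobenius-semisimplification commute with tensor operations, hence with $\Symm^{n-1}$. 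At $v\mid p$ the same argument applies using local--global compatibility at $p$ (Barnet-Lamb--Gee--Geraghty--Taylor, Caraiani; T.~Saito and Skinner for $\GL_2$) and the tensor-compatibility of $D_{\mathrm{pst}}$; in particular the Hodge--Tate weights of $r_{\Pi,\iota}$, and hence the infinitesimal character of $\Pi$, are forced to be those of $\Symm^{n-1}\pi$. At an archimedean place $v$, the infinitesimal character of $\Pi_v$ is now determined; Clozel's purity lemma forces $\Pi_v$ to be essentially tempered; and the central character of $\Pi$, corresponding under class field theory to $\det r_{\Pi,\iota}$, agrees with that of $\Symm^{n-1}\pi$. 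An essentially tempered cohomological representation of $\GL_n(F_v)$, $v\mid\infty$, is determined by its infinitesimal character together with its central character, so the archimedean parameters agree as well. Cuspidality and polarizability of the witness representation are already built into hypothesis $(2)$.

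The main obstacle is that $(1)$ requires agreement of the \emph{entire} local parameter $\rec_{F_v}(\Pi_v)$ --- including the monodromy operator $N$ --- at every finite place, so one cannot get away with the semisimplified form of local--global compatibility (which suffices for $(1)\Rightarrow(2)$) but must invoke its full form; this is most delicate at places above $p$ and rests on the deepest results cited above. A secondary technical point is the archimedean analysis, where regular algebraicity and polarizability (through Clozel's purity lemma and the classification of cohomological representations of $\GL_n(\RR)$) must be combined to single out $\Pi_v$ among the finitely many cohomological parameters with the prescribed infinitesimal character.
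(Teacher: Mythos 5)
Your proof is correct and follows essentially the same route as the paper. The paper handles $(1)\Rightarrow(2)$ by the same comparison of Satake parameters at unramified places; for $(2)\Rightarrow(1)$ it invokes local--global compatibility at finite places exactly as you do, and at archimedean places it observes that the restrictions to $\bC^\times$ agree (from the Hodge--Tate weights) and then appeals to the argument of \cite[Proof of Theorem 8.1]{Tho22} -- which is the temperedness-plus-classification argument you spell out by hand via Clozel's purity lemma and the structure of cohomological representations of $\GL_n(\RR)$.
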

\begin{proof}
If (1) holds then (2) holds, as can be checked at finite unramified places. If (2) holds, then we must show that $\Pi = \Symm^{n-1} \pi$, i.e.\ that $\Pi_v$ has the correct local components. If $v$ is a finite place then this follows from local-global compatibility for $r_{\pi, \iota}$. If $v$ is an infinite place, then we have $\rec_{F_v}(\Pi_v)|_{\bC^\times} \cong \Symm^{n-1} \rec_{F_v} \pi_v|_{\bC^\times}$ (as both sides are a sum of characters, determined by the Hodge--Tate weights of $r_{\Pi, \iota} \cong \Symm^{n-1}  r_{\pi, \iota}$). We can now repeat verbatim the argument of \cite[Proof of Theorem 8.1]{Tho22} to conclude that $\rec_{F_v}(\Pi_v) \cong \Symm^{n-1} \, \rec_{F_v} \pi_v$.
\end{proof}

We next give a useful lemma, which implies that many congruences of residual Galois representations over $\bQ$ remain robust after base extension to a totally real field $F$.
\begin{lemma}\label{lem_large_image_over_F}
Let $p$ be an odd prime, and let $\overline{\rho} : G_\bQ \to \GL_2(\overline{\bF}_p)$ be a continuous representation such that $\overline{\rho}(G_\bQ)$ contains a conjugate of $\SL_2(\bF_{p^a})$ for some $a \geq 1$ such that $p^a \geq 5$. Let $F$ be a totally real number field. Then $\overline{\rho}(G_F)$ contains a conjugate of $\SL_2(\bF_{p^a})$. 
\end{lemma}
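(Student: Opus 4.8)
The plan is to reduce the statement to a purely group-theoretic fact about subgroups of $\GL_2(\overline{\bF}_p)$ containing $\SL_2(\bF_{p^a})$, combined with the observation that $\overline{\rho}(G_F)$ is a normal subgroup of a subgroup of finite index in $\overline{\rho}(G_\bQ)$. First I would replace $F$ by its Galois closure $\widetilde{F}$ over $\bQ$; since $\SL_2(\bF_{p^a}) \subseteq \overline{\rho}(G_{\widetilde{F}}) \subseteq \overline{\rho}(G_F)$, it suffices to treat the case $F/\bQ$ Galois. Then $H := \overline{\rho}(G_F)$ is a normal subgroup of $G := \overline{\rho}(G_\bQ)$ with $G/H$ soluble (indeed a quotient of $\Gal(F/\bQ)$), and $G$ contains a conjugate of $\SL_2(\bF_{p^a})$; after conjugating we may assume $S := \SL_2(\bF_{p^a}) \subseteq G$.

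The key step is to show that any finite-index subgroup $H$ of $G$ still contains a conjugate of $\SL_2(\bF_{p^{a'}})$ for some $a'$, and then to upgrade $a'$ back to $a$ using that $G/H$ is soluble. For the first part, the group $S = \SL_2(\bF_{p^a})$ is quasisimple when $p^a \geq 5$ (its only proper normal subgroup is the center $\{\pm 1\}$, and $S/\{\pm 1\} = \PSL_2(\bF_{p^a})$ is simple); the standard classification of subgroups of $\GL_2$ of order divisible by $p$ — going back to Dickson, and recorded in the form needed e.g.\ in work on images of modular Galois representations — shows that a subgroup of $\GL_2(\overline{\bF}_p)$ containing such an $S$ is, up to conjugacy and up to scalars, sandwiched between $\SL_2(\bF_{p^b})$ and the group generated by $\GL_2(\bF_{p^b})$ and scalars, for some multiple $b$ of $a$ (in our case simply $b = a$, since $S$ determines the field). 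The subgroup $H \cap S$ has finite index in $S$, hence by quasisimplicity either contains $S' := [S,S] = S$ itself or is contained in the center; the latter is impossible as $H$ has finite index and $S$ is infinite only for... rather, $S$ is finite, so instead one argues: $H \cap S \trianglelefteq$ is not forced, but $H \trianglelefteq G$ gives $H \cap S \trianglelefteq S$, so $H \cap S \in \{1, \{\pm1\}, S\}$ by the normal subgroup structure; since $[S : H\cap S] \leq [G:H] < \infty$ and is actually bounded, and $S$ is large, $H \cap S = S$, i.e.\ $S \subseteq H$. Thus in fact no "upgrade" is needed once $F/\bQ$ is Galois: normality does all the work.

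Concretely, then, the proof I would write is: reduce to $F/\bQ$ Galois by passing to the Galois closure; conjugate so that $S = \SL_2(\bF_{p^a}) \subseteq \overline{\rho}(G_\bQ)$; observe $H = \overline{\rho}(G_F) \trianglelefteq \overline{\rho}(G_\bQ)$, so $H \cap S$ is a normal subgroup of $S$; invoke that $p^a \geq 5$ forces the only normal subgroups of $S$ to be $1$, $\{\pm 1\}$, $S$; rule out the first two using that $[\,\overline{\rho}(G_\bQ) : H\,]$ is finite (indeed bounded by $[F:\bQ]$) while $[S : 1]$ and $[S : \{\pm 1\}]$ grow with $p^a$ — or, cleaner, note $S/(H\cap S) \hookrightarrow \overline{\rho}(G_\bQ)/H$ is both a quotient of the perfect group $S$ (when $p^a \geq 5$, $S$ is its own derived subgroup) and a subquotient of the soluble group $\Gal(F/\bQ)$, hence trivial; conclude $S \subseteq H = \overline{\rho}(G_F)$. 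The main obstacle is organizing the group theory cleanly — specifically making sure the perfectness of $\SL_2(\bF_{p^a})$ for $p^a \geq 5$ (which fails for $p^a \in \{2,3\}$, exactly the excluded cases, and is delicate for $p^a = 4$ versus the hypothesis $p$ odd) is the precise input that makes the soluble quotient argument go through; once that is in hand, the argument is short. Note the oddness of $p$ is only needed to ensure $p^a \geq 5 \Rightarrow p^a \notin \{2,4\}$ causes no trouble, i.e.\ it is essentially automatic here, so the real hypothesis in play is $p^a \geq 5$.
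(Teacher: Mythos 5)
There is a genuine gap. Your argument hinges on the claim that $\Gal(F/\bQ)$ (after replacing $F$ by its Galois closure) is soluble, which you use to conclude that the perfect group $S/(H\cap S)$ embeds in a soluble group and is therefore trivial. But that claim is false: a totally real number field can have \emph{any} Galois group that arises over $\bQ$ — $S_n$, $A_n$, $\PSL_2(\bF_q)$, and so on — and nothing about being totally real forces solubility. (You may be conflating this with the separate Lemma~\ref{lem_OK_after_base_change}, where the extension $F'/F$ is \emph{assumed} soluble.) Your weaker fallback — that $[S : H\cap S] \leq [G : H] \leq [F:\bQ]$ is ``bounded, so $H\cap S = S$'' — also fails, since $[F:\bQ]$ can be as large as one likes and in particular can exceed $|\SL_2(\bF_{p^a})|$. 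So neither version of your argument rules out the scenario $H\cap S = \{\pm 1\}$, $S/(H\cap S) \cong \PSL_2(\bF_{p^a}) \hookrightarrow G/H$.

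What saves the lemma are precisely the two hypotheses you set aside as inessential, and the paper's proof uses both crucially. Since $F$ is totally real, complex conjugation $c$ (for any choice of embedding $\overline{\bQ}\hookrightarrow \bC$) lies in $G_F$; since $p$ is odd and $\det\overline{\rho}(c) = -1 \neq 1$, the matrix $\overline{\rho}(c)$ has eigenvalues $1,-1$ and hence is a non-scalar involution, so $\operatorname{Proj}\overline{\rho}(c)$ is a nontrivial element of $\operatorname{Proj}\overline{\rho}(G_F)$. After passing to the Galois closure (still totally real), the normal subgroup $\operatorname{Proj}\overline{\rho}(G_F) \trianglelefteq \operatorname{Proj}\overline{\rho}(G_\bQ)$ is therefore nontrivial; since $\operatorname{Proj}\overline{\rho}(G_\bQ)$ equals $\PSL_2(\bF_{p^a})$ or $\PGL_2(\bF_{p^a})$ (up to conjugacy, after enlarging $a$) and $p^a \geq 5$ makes $\PSL_2(\bF_{p^a})$ simple, any nontrivial normal subgroup contains $\PSL_2(\bF_{p^a})$, which gives the conclusion. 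Your remark that ``$p$ odd'' is essentially automatic is also wrong: for $p=2$ one can have $\overline{\rho}$ even (i.e.\ $\overline{\rho}(c)$ trivial) with image $\SL_2(\bF_8)$, in which case the field $K$ cut out by $\overline{\rho}$ is totally real and taking $F=K$ gives a counterexample. So the correct proof really does run through complex conjugation, and the group-theoretic reduction you propose is not enough on its own.
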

\begin{proof}
After increasing $a$ and replacing $\overline{\rho}$ by a conjugate, and applying the classification of finite subgroups of $\GL_2(\overline{\bF}_p)$, we can assume that $\operatorname{Proj} \overline{\rho}(G_\bQ)$ equals $\PSL_2(\bF_{p^a})$ or $\PGL_2(\bF_{p^a})$. In particular, it contains the simple group $\PSL_2(\bF_{p^a})$ as a subgroup of index at most 2. If $\operatorname{Proj} \overline{\rho}(G_F)$ contains $\PSL_2(\bF_{p^a})$, then we're done. 

We are free to enlarge $F$, so we can replace it by its Galois closure.  Let $K / \bQ$ be the extension cut out by $\operatorname{Proj} \overline{\rho}$. Then $\Gal(K / K \cap F)$ is a normal subgroup of $\Gal(K / \bQ)$ which contains each complex conjugation. Since $\operatorname{Proj} \overline{\rho}(G_F) = \operatorname{Proj} \overline{\rho}(G_{K \cap F})$, we see that if the conclusion of the lemma fails, then $\Gal(K / K \cap F)$ must be trivial. This would contradict the fact that since $p$ is odd, each complex conjugation is non-trivial as an element of $\Gal(K / \bQ)$. This concludes the proof. 
\end{proof}

\section{Analytic continuation of base change}

\subsection{Definite unitary groups}\label{subsec:unitary}
As in \cite{New21a}, our main analytic continuation result will first be proved for definite unitary groups. We adopt the following assumptions and notation, as in \cite[\S1]{New21a}:
\begin{itemize}
	\item $K$ is a CM number field  such that $K / K^+$ is everywhere 
	unramified (this implies that $[K^+ : \Q]$ is even).
	\item $p$ is a prime. We write $S_p$ for the set of $p$-adic places of 
	$K^+$. 
	\item $\iota$ is an isomorphism $\iota:\Qpbar\toisom\bC$.
	\item $S$ is a finite set of finite places of $K^+$, all of which split in 
	$K$. $S$ contains $S_p$.
	\item For each $v \in S$, we suppose fixed a factorization $v = \wv \wv^c$ in 
	$K$, and write $\widetilde{S} = \{ \wv \mid v \in S \}$. 
\end{itemize}

Let $n \geq 1$ be an integer. Under the above assumptions we can fix the following data:
\begin{itemize}
	\item The unitary group $G_n = G$ over $K^+$ with $R$-points given by the formula
	\begin{equation}\label{eqn_definite_unitary_group} G(R) = \{ g \in \GL_n(R \otimes_{K^+} K) \mid g =  (1 \otimes c)(g)^{-t} \}. \end{equation}
	$G$ is quasi-split at all finite places, while for each place $v | \infty$ of $K^+$, $G(K^+_v)$ is compact. We use the same formula to extend $G$ to a reductive group scheme over $\cO_{K^+}$.
\end{itemize}
\begin{itemize}	
	\item If $L^+/K^+$ is a finite totally real extension, we set $L = L^+K$ and let $G_{n,L^+}$ be the extension of scalars of $G_n$ to $L^+$ (it can be identified with the unitary group defined relative to $L/L^+$ by formula \ref{eqn_definite_unitary_group}).
	\item If $T$ is a set of places of $K^+$ then we write $T_L$ for the set of 
	places of $L^+$ lying above $T$. If $w \in T_L$ lies above $v \in T$ and $v$ 
	splits $v = \wv \wv^c$ in $K$ (in particular, we suppose that we have made a choice of $\wv|v$), then we will write $\ww$ for the unique place of 
	$L$ which lies above both $w$ and $\wv$ (in which case $w$ splits $w = 
	\ww \ww^c$ in $L$). We write e.g.\ $\widetilde{S}_L$ for the set of places of the form $\ww$ ($w \in S_L$).
\end{itemize}

\subsection{Eigenvarieties, their classical points and Galois pseudocharacters}
Let $E \subset \Qpbar$ be a coefficient field, which contains the normal closure of $L$, and denote its residue field by $k$. Let $U_{n,L} = \prod_v U_{n,L,v} \subset G_{n,L^+}(\bA_{L^+}^\infty)$ be an open compact subgroup with $U_{n,L,v} = G_{n}(\cO_{L^+_v})$ for finite places $v\not\in S$. (When $L = K$ we will suppress all the subscripts in our notation.)

For $\pi$ an automorphic representation of $G_n(\bA_{L^+})$ with $\pi^{U_{n,L}}\neq 0$, we have the notion of accessible refinements $\chi = (\chi_v)_{v\in S_{p,L}}$ for $\pi$, where each $\chi_v$ is a $p$-adic character of the diagonal maximal torus $T_n(L_{\wv})$ in $G_n(L^+_v) \cong \GL_n(L_{\wv})$ which appears in the normalized Jacquet module of $\iota^{-1}\pi_v$. See \cite[\S2.17]{New21a} for more details.

We denote by $\cR\cA_{n,L}$ the set of pairs $(\pi,\chi)$, where $\pi$ is an automorphic representation of $G_n(\bA_{L^+})$ such that $(\pi^{\infty})^{U_{n,L}^p} \ne 0$ and $\chi$ is an accessible refinement of $\pi$.
	
We have an eigenvariety $\cE_{n,L}$ of tame level $U_{n,L}^p$, a rigid space defined over $E$. See \cite[\S2.18.1]{New21a}. Its (Zariski-dense) subset $Z_{n,L}$ of classical points admits a surjective map $\gamma_n: \cR\cA_{n,L} \to Z_{n,L}$. 

The eigenvariety comes equipped with a finite morphism $\nu: \cE_{n,L}\to \cT_{n,L}$, where $\cT_{n,L} = \Hom\left(\prod_{v\in S_{p,L}}T_n(L_{\wv}),\bG_m\right)$ and $\nu(\gamma_n(\pi,\chi)) = \nu(\pi,\chi)$ is given by the formula \cite[(2.18.1)]{New21a}. It is also natural to consider a renormalisation $\jmath\nu$ given by composing $\nu$ with an automorphism $\jmath$ of $\cT_{n,L}$ (see \cite[\S2.17]{New21a} again). When studying the local geometry of eigenvarieties, we will need to restrict to points $z$ where the character $\delta(z) := \jmath\nu(z)$ is regular:
\begin{defn}\label{def:regular}
	A character \[\delta = \prod_{v \in S_{p,L}}\delta_v \in \Hom\left(\prod_{v\in S_{p,L}}T_n(L_{\wv}),\Qpbar^\times\right)\] is \emph{regular} if for each $v \in S_{p,L}$ and $1\le i< j \le n$ we have $\delta_{v,i}/\delta_{v,j} \ne x^{a_v}$ for any $a_v = (a_{v,\tau})_{\tau}\in\Z_{\ge 0}^{\Hom_{\Qp}(L_{\wv},\Qpbar)}$, where $x^{a_v}(y) = \prod_{\tau}\tau(y)^{a_{v,\tau}}$.
\end{defn}
Regularity defines a Zariski-open condition on characters, so we get a Zariski-open subspace $\cT_{n,L}^{reg} \subset \cT_{n,L}$.

For a single place $v$, we also have $\cT_{n,L_{\wv}}^{reg} \subset \cT_{n,L_{\wv}}$ where $\cT_{n,L_{\wv}} = \Hom\left(T_n(L_{\wv}),\bG_m\right)$ and $\cT_{n,L_{\wv}}^{reg}$ is defined by the condition in Definition \ref{def:regular} at the place $v$.

There is a family of conjugate self-dual Galois pseudocharacters $\tau^{\univ}_{n,L} : G_{L,S_L}\to \cO(\cE_{n,L})$ over $\cE_{n,L}$, with the property that the specialization $\tau_{z}$ at $z = \gamma(\pi,\chi) \in Z_{n,L}$ is equal to $\tr r_{\pi,\iota}$. We fix a residual pseudocharacter $\overline{\tau}:G_{L,S_L}\to k$ and consider the (open and closed) subspace $\cE_{n,L}(\overline{\tau}) \subset \cE_{n,L}$ where $\tau^{\univ}_{n,L}$ reduces to $\overline{\tau}$. We assume that this subspace is non-empty (in particular, $\overline{\tau}$ is then conjugate self-dual). Then we have the space $\cX_{ps,n,L}$ of conjugate self-dual deformations of $\overline{\tau}$, defined in \cite[\S2.11.1]{New21a}.

Now we have a morphism $\lambda: \cE_{n,L}(\overline{\tau}) \to \cX_{ps,n,L}$ and a closed immersion $i_n = \lambda \times (\jmath\circ\nu): \cE_{n,L}(\overline{\tau}) \to \cX_{ps,n,L} \times \cT_{n,L}$. 

The locus $\cE_{n,L}^{\mathrm{irred}}$ where $\tau^{\univ}_{n,L}$ is absolutely irreducible is an open subset of $\cE_{n,L}$. If $z \in \cE_{n,L}^{\mathrm{irred}}(\Qpbar)$, we have an absolutely irreducible representation $r_z : G_{L,S_L} \to \GL_n(\Qpbar)$, well-defined up to conjugation, with $\tr r_z = \tau_z$. 

If $\delta_v \in \Hom\left(T_n(L_{\wv}),\Qpbar^\times\right)$ and $\rho_v: G_{L_{\wv}}\to \GL_n(\Qpbar)$ is a continuous representation, we have the notion of a triangulation of $\rho_v$ with parameter $\delta_v$ (cf.~\cite[\S2.3.1]{New21a}). When the labelled weights of the triangulation are strictly increasing sequences of integers, we say that the triangulation is non-critical. When the parameter $\delta_v$ satisfies $\delta_{v,i}(\varpi_{\wv}) \in \Zpbarx$ for each $i$ we say that it is ordinary.

\begin{defn}
	Suppose $(\pi,\chi) \in \cR\cA_{n,L}$ and let $\delta = \jmath\nu(\pi,\chi)$. The refinement $\chi$ is said to be \emph{ordinary} if the parameter $\delta_v$ is ordinary for each $v \in S_{p,L}$.
\end{defn}

\begin{remark}
	If $(\pi,\chi)  \in \cR\cA_{n,L}$ and $\chi$ is ordinary, then \cite[Theorem 2.4]{jackreducible} implies that $r_{\pi,\iota}|_{G_{L_{\wv}}}$ has a non-critical triangulation with parameter $\delta_{v}$ for each $v \in S_{p,L}$.
\end{remark}

\begin{defn}
	A point $z \in \cE_{n,L}(\overline{\tau})^{\mathrm{irred}}(\Qpbar)$ is said to be \emph{non-critical} if for each $v \in S_{p,L}$, $r_z|_{G_{L_{\wv}}}$ admits a non-critical triangulation of parameter $\delta_v$, where $\jmath\nu(z) = \prod_{v \in S_{p,L}}\delta_v$. 
\end{defn} 
If $z$ is a non-critical point, the parameters $\delta_v = \delta_{v,alg}\delta_{v,sm}$ are necessarily locally algebraic and the local representations $r_z|_{G_{L_{\wv}}}$ are de Rham. The (distinct) labelled Hodge--Tate weights correspond to the strictly dominant character $\delta_{v,alg}$. (See \cite[Proposition 2.3.4]{bellaiche_chenevier_pseudobook} and \cite[Proposition 2.6]{Hel16}.)

We now prove a simple classicality result for non-critical points, a variant of \cite[Lemma 2.30]{New21a}. 

\begin{prop}\label{prop:noncritical implies classical}
	Let $z \in \cE_{n,L}(\overline{\tau})^{\mathrm{irred}}(\Qpbar)$ be a non-critical point with $\delta:=\jmath\nu(z)\in\cT_{n,L}^{reg}(\Qpbar)$ a regular parameter. Then $z \in Z_{n,L}$.
\end{prop}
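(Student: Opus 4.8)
The plan is to show that a non-critical point $z$ with regular parameter, although a priori only a point of the eigenvariety, actually corresponds to a classical automorphic form, by exhibiting enough of its companion-form structure to force classicality. Since $\cE_{n,L}$ is constructed from an overconvergent cohomology (or completed-cohomology) module, the point $z$ gives rise to an overconvergent eigenclass; the task is to prove this eigenclass is classical. First I would unwind the definition of non-criticality: for each $v \in S_{p,L}$, $r_z|_{G_{L_{\wv}}}$ admits a non-critical triangulation with parameter $\delta_v$, and as recorded just before the proposition this forces $r_z|_{G_{L_{\wv}}}$ to be de Rham with distinct Hodge--Tate weights matching the strictly dominant algebraic part $\delta_{v,alg}$, and $\delta_v = \delta_{v,alg}\delta_{v,sm}$ locally algebraic. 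So the ``weight'' of $z$ is a genuine dominant algebraic weight, and the only thing standing between $z$ and $Z_{n,L}$ is the possibility that the corresponding Jacquet-module eigensystem comes from an overconvergent but non-classical class.

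The key step is a classicality criterion in the style of Coleman's ``small slope implies classical'', but in the guise used in \cite{New21a}: one compares the overconvergent eigenspace at $z$ with the classical eigenspace via the $\BS$-type (Bernstein--Zelevinsky/Emerton) spectral sequence or, more directly, via the fact that the obstruction to classicality is controlled by the occurrence of certain ``twisted'' eigensystems whose parameters differ from $\delta$ by a product $x^{a_v}$ of nonnegative-power characters at the places $v\in S_{p,L}$ (companion points on the boundary of the relevant weight). This is exactly where the hypothesis $\delta \in \cT_{n,L}^{reg}(\Qpbar)$ enters: regularity says precisely that $\delta_{v,i}/\delta_{v,j}\ne x^{a_v}$ for $1\le i<j\le n$ and $a_v\in \Z_{\ge 0}^{\Hom_{\Qp}(L_{\wv},\Qpbar)}$, so none of the potential ``companion'' parameters that would witness non-classicality can occur. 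Concretely, I would follow the argument of \cite[Lemma 2.30]{New21a}: combine the non-critical triangulation (which gives, by \cite[Theorem 2.4]{jackreducible} or the Bellaïche--Chenevier machinery, a saturated chain of sub-triangulations with the correct de Rham Hodge--Tate weights) with the regularity of $\delta$ to deduce that the locally analytic Jacquet module of the overconvergent representation at $z$ contains the expected classical (finite-dimensional, algebraic) constituent, and then invoke the classical/overconvergent comparison (Emerton's criterion, or the explicit control theorem underlying the construction of $\cE_{n,L}$) to conclude that $z$ is classical, i.e.\ $z\in Z_{n,L}$.

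The main obstacle is the passage from ``$r_z|_{G_{L_{\wv}}}$ has a non-critical triangulation'' to ``the overconvergent eigenclass is classical'': this is not purely Galois-theoretic and requires feeding the triangulation data back into the automorphic/cohomological side. In \cite{New21a} this is handled when $L=\bQ$ (or the relevant $K^+$) and with slightly different hypotheses; here the new input is that we work over a general $L^+$ and phrase the non-criticality and regularity conditions place-by-place over $S_{p,L}$, so I would need to check that the relevant classicality/control theorems (e.g.\ the ones in \cite{bellaiche_chenevier_pseudobook,Hel16} and the overconvergent cohomology references underlying \cite[\S2.18.1]{New21a}) apply verbatim with $\Hom_{\Qp}(L_{\wv},\Qpbar)$-indexed weights. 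Granting those, the proof is a minor strengthening of \cite[Lemma 2.30]{New21a}: the regular-parameter hypothesis is doing the same work the small-slope hypothesis does there, and the non-criticality hypothesis packages the de Rham + correct-weights conclusion that one otherwise extracts from a slope bound.
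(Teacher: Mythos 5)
Your high-level strategy is aligned with the paper's: assume $z$ is not classical, extract a ``companion point'' with non-dominant locally algebraic weight via the Orlik--Strauch/BGG machinery as in \cite[Lemma 2.30]{New21a}, and then argue that regularity of $\delta$ rules out such companion points. The proof here does indeed end ``in exactly the same way'' as that lemma. However, there is a genuine gap precisely at the place you flag as the ``main obstacle,'' and it is not merely a matter of transporting the small-slope argument to $L^+$: in \cite[Lemma 2.30]{New21a} the numerical non-criticality (slope) hypothesis kills the offending $\Hom$ spaces directly, whereas here the hypotheses ``non-critical triangulation exists'' plus ``$\delta$ regular'' are strictly weaker, and the argument needs new Galois-theoretic input. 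Concretely, one must show that if $z'$ is a would-be companion point with parameter $(w\delta_{alg})\delta_{sm}$, $w\ne 1$, then interpolating families of $(\varphi,\Gamma_{L_{\wv}})$-modules at $z'$ would produce a second triangulation of $r_z|_{G_{L_{\wv}}}$ with locally algebraic parameter $\delta'_{alg}\delta_{sm,v}$, and that this contradicts the existence of the given non-critical triangulation with regular $\delta_v$.

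Ruling out this second triangulation is the content of the two lemmas the paper proves immediately before the proposition, which your proposal does not identify or supply. The first (Lemma \ref{lem:no change of weight companions for noncritical parameters}) is a pure $(\varphi,\Gamma_{L_{\wv}})$-module statement: a module admitting a triangulation with strictly dominant, regular, locally algebraic parameter $\delta$ cannot admit another triangulation with the same smooth part but different algebraic part; the proof inducts using uniqueness of the $\cR(\delta_1)$-submodule (from regularity and the Herr-complex computation) and a $H^0$-vanishing argument via \cite[Proposition 6.2.8]{Ked14}. The second (Lemma \ref{lem:triangulation jump at critical}) is a genuine refinement of \cite[Thm.~6.3.13]{Ked14}: not only does the parameter of the specialized triangulation differ from $\delta_z$ by algebraic characters $\prod_\tau \tau(x)^{a_\tau}$, but at the first index where they differ one has $a_\tau\le 0$ for all $\tau$; establishing this requires showing that one can quotient by the first step of the filtration and keep a genuine $(\varphi,\Gamma)$-module over the Robba ring of the affinoid (via the duality and descent argument in the proof). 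Without these two lemmas, ``combine the non-critical triangulation with the regularity of $\delta$'' is not yet an argument, and invoking Emerton's criterion or a control theorem as a black box would require a slope bound that you do not have.
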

Before the proof of this proposition, we give two preliminary lemmas on $(\varphi,\Gamma_{L_{\wv}})$-modules. The role of these is to show that a non-critical $z$ as in the statement of the proposition cannot have a companion point $z'$ with $r_{z'} \cong r_z$ and $\jmath_n(\nu(z')) = \delta'_{alg}\delta_{sm}$ with the same smooth part as $\delta$ but $\delta'_{alg}\ne \delta_{alg}$. This is a small part of the companion points conjectures of Breuil \cite[Conj.~6.6]{Bre15}  and Hansen \cite[\S6.2]{Han17}. Under a stronger regularity hypothesis on $\delta$, it would essentially follow from \cite[Thm.~4.2.3]{bhs-localmodel}. We make use of the Robba ring $\cR_{E,L_{\wv}}$ and its $(\varphi,\Gamma_{L_{\wv}})$-modules, as recalled in \cite[\S2.3.1]{New21a}.

\begin{lemma}\label{lem:no change of weight companions for noncritical parameters}
	Let $v \in S_{p,L}$, and let $D$ be a $(\varphi,\Gamma_{L_{\wv}})$-module over $\cR_{E,L_{\wv}}$ admitting a triangulation $\Fil_\bullet$ of locally algebraic parameter  $\delta = \delta_{alg}\delta_{sm}\in\cT_{n,L_{\wv}}^{reg}(E)$.
	
	Suppose that for each $\tau \in \Hom_{\Qp}(L_{\wv},E)$ we have a strictly increasing sequence of integers $\wt_\tau(\delta_1)<\wt_\tau(\delta_2)< \cdots \wt_{\tau}(\delta_n)$ (i.e.~$\delta_{alg}$ is strictly dominant). 
	
	Suppose that $D$ has a triangulation  $\Fil'_\bullet$ with a locally algebraic parameter $\delta' = \delta'_{alg}\delta_{sm}$ whose smooth part coincides with that of $\delta$. Then $\delta' = \delta$. 
\end{lemma}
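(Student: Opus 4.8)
The plan is to induct on $n$, at each stage showing that the bottom steps $\Fil_1$ and $\Fil'_1$ of the two triangulations agree, and then passing to $D/\Fil_1$. The only substantial input is the classification of morphisms between rank-one $(\varphi,\Gamma_{L_{\wv}})$-modules over $\cR := \cR_{E,L_{\wv}}$ (recalled in \cite[\S2.3.1]{New21a}), which I combine with the following consequence of regularity: \emph{the smooth parts $\delta_{1,sm},\dots,\delta_{n,sm}$ are pairwise distinct.} Indeed, write $\delta_i = \delta_{i,alg}\delta_{i,sm}$ and $\delta'_i = \delta'_{i,alg}\delta_{i,sm}$ (the hypothesis being that $\delta$ and $\delta'$ have the same smooth part in each coordinate). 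If $\delta_{i,sm}=\delta_{j,sm}$ with $i<j$, then $\delta_i\delta_j^{-1}=\delta_{i,alg}\delta_{j,alg}^{-1}$ is an algebraic character, and the strict dominance of $\delta_{alg}$ forces it to be of the form $x^{a}$ with $0\neq a\in\Z_{\geq 0}^{\Hom_{\Qp}(L_{\wv},E)}$ (in the weight normalisation of \cite[\S2.17]{New21a}); this contradicts Definition \ref{def:regular}. Hence the $\delta_{i,sm}$, and therefore the $\delta'_{i,sm}$, are pairwise distinct.

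For the inductive step, the case $n=1$ is immediate, since then $D=\cR(\delta_1)=\cR(\delta'_1)$ forces $\delta_1=\delta'_1$. Assume $n\geq 2$, and let $i$ be minimal with $\Fil_1\subseteq\Fil'_i$ (possible since $\Fil'_n=D$). I claim the composite $\Fil_1\hookrightarrow\Fil'_i\twoheadrightarrow\Fil'_i/\Fil'_{i-1}\cong\cR(\delta'_i)$ is injective: its kernel $\Fil_1\cap\Fil'_{i-1}$ is a submodule of the rank-one torsion-free module $\Fil_1=\cR(\delta_1)$, and if it had rank one then, since $\Fil'_{i-1}$ is saturated in $D$, its saturation would force $\Fil_1\subseteq\Fil'_{i-1}$, contradicting minimality of $i$; so it has rank zero and hence vanishes. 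Thus we get a nonzero $(\varphi,\Gamma)$-module morphism $\cR(\delta_1)\hookrightarrow\cR(\delta'_i)$. By the classification of morphisms of rank-one objects, $\delta_1\delta'^{-1}_i$ is algebraic with trivial smooth part, so $\delta_{1,sm}=\delta'_{i,sm}=\delta_{i,sm}$; by the distinctness established above, $i=1$, i.e.\ $\Fil_1\subseteq\Fil'_1$. Exchanging the roles of $\Fil_\bullet$ and $\Fil'_\bullet$ gives $\Fil'_1\subseteq\Fil_1$, hence $\Fil_1=\Fil'_1$, and comparing graded pieces $\cR(\delta_1)=\cR(\delta'_1)$, so $\delta_1=\delta'_1$.

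Now set $\overline{D}:=D/\Fil_1=D/\Fil'_1$ with the induced triangulations $\overline{\Fil}_\bullet,\overline{\Fil}'_\bullet$, of parameters $(\delta_2,\dots,\delta_n)$ and $(\delta'_2,\dots,\delta'_n)$ respectively. The parameter $(\delta_2,\dots,\delta_n)$ is still locally algebraic with strictly dominant algebraic part, still lies in $\cT^{reg}_{n-1,L_{\wv}}(E)$ (its defining inequalities are among those for $\delta$), and the two parameters still share a smooth part. By the inductive hypothesis, $\delta'_j=\delta_j$ for $j=2,\dots,n$, and together with $\delta'_1=\delta_1$ this gives $\delta'=\delta$.

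The step I expect to require the most care is the distinctness assertion in the first paragraph: Definition \ref{def:regular} is a one-sided condition, so one must track the sign conventions for the weights $\wt_\tau$ and for the automorphism $\jmath$ used to define $\delta=\jmath\nu$ to be sure that strict dominance of $\delta_{alg}$ genuinely rules out coincidences among the $\delta_{i,sm}$. Once this is in place, the remainder is formal, relying only on the structure of rank-one $(\varphi,\Gamma)$-modules over the Robba ring and elementary properties of saturated submodules.
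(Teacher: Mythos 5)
Your proof is correct, but it takes a genuinely different route from the paper's. The paper begins by using the Sen weights of $D$ to write $\delta'_{alg}=w\delta_{alg}$ for a Weyl element $w$, then argues by contradiction from $w\neq 1$: it picks the first index $i$ where the algebraic weights differ, reduces to $i=1$ via a one-dimensionality statement for $H^0_{\varphi,\gamma_{L_{\wv}}}(D(\delta_1^{-1}))$, and at $i=1$ combines regularity with \cite[Prop.~6.2.8]{Ked14} to show $H^0_{\varphi,\gamma_{L_{\wv}}}(\delta_j(\delta'_1)^{-1})=0$ for every $j$, contradicting $\cR(\delta'_1)\hookrightarrow D$. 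You instead isolate the key consequence of regularity plus strict dominance at the outset --- that the smooth parts $\delta_{1,sm},\dots,\delta_{n,sm}$ are pairwise distinct --- and then run a clean filtration-matching induction: $\Fil_1$ injects into some graded piece $\cR(\delta'_i)$ of the other triangulation, the rank-one morphism classification forces $\delta_{1,sm}=\delta_{i,sm}$, hence $i=1$, and symmetry gives $\Fil_1=\Fil'_1$ and $\delta_1=\delta'_1$ before passing to $D/\Fil_1$. Both arguments ultimately lean on the same input (the rank-one $H^0$ classification), but your version packages the use of regularity once and for all in the distinctness lemma and avoids both the Sen-weight bookkeeping and the split into the $i>1$ and $i=1$ cases, so it reads as a single uniform induction; the paper's version, by contrast, makes the Weyl-group structure explicit, which matches the phrasing of the companion-point conjectures it is emulating. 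Your caution about the sign conventions for $\wt_\tau$ and $\jmath$ is warranted but works out: with the convention implicit in the paper's proof ($a_\tau=\wt_\tau(\delta_1)-\wt_\tau(\delta'_1)$), strict dominance does give $\delta_i/\delta_j=x^{a}$ with $a\in\Z_{>0}^{\Hom_{\Qp}(L_{\wv},E)}$ for $i<j$ whenever the smooth parts coincide, which is exactly what Definition \ref{def:regular} forbids. One small presentational suggestion: when you invoke saturation to deduce that $\Fil_1\cap\Fil'_{i-1}$ cannot have rank one, it is worth noting explicitly that the cokernel $D/\Fil'_{i-1}$ is torsion-free over $\cR_{E,L_{\wv}}$ (being an iterated extension of rank-one free modules), since this is what makes the rank-zero quotient $\Fil_1/(\Fil_1\cap\Fil'_{i-1})$ vanish.
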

\begin{proof}
	Considering the Sen weights of $D$, we see that $\delta'_{alg} = w\delta_{alg}$ for an element $w$ of the Weyl group of $(\Res_{L_{\wv}/\Qp} \GL_n)\times_{\Qp} E$. Suppose, for a contradiction, that $w \ne 1$. Choose $i \ge 1$ such that $\wt_{\tau}(\delta_j) = \wt_{\tau}(\delta'_j)$ for all $\tau$ and all $j < i$ and $\wt_{\tau}(\delta_i) \le \wt_{\tau}(\delta'_i)$ for all $\tau$ with strict inequality for some $\tau = \tau_0$. 
	
	We now reduce to the case $i=1$. If $i>1$, then $\delta_1 = \delta'_1$. The fact that $\delta \in\cT_{n,L_{\wv}}^{reg}(E)$ implies that the cohomology group $H^0_{\varphi,\gamma_{L_{\wv}}}(D(\delta_1^{-1}))$ of the Herr complex (\cite[Definition 2.3.3]{Ked14}) is one-dimensional (cf.~\cite[Lemma 2.4]{New21a}). In particular, there is a unique submodule of $D$ which is isomorphic to $\cR_{E,L_{\wv}}(\delta_1)$. We deduce that $\Fil_1 = \Fil'_1$. Considering $D/\Fil_1$ and inducting on $n$, we may assume that $i=1$.
	
	The algebraic character $\delta_1'/\delta_1$ satisfies \begin{equation}\label{eq:regular change triangulation}\delta_1'/\delta_1 = \prod_{\tau}\tau(x)^{a_{\tau}}\end{equation} with $a_{\tau} = \wt_{\tau}(\delta_1) - \wt_{\tau}(\delta'_1)$. We have $a_\tau \le 0$ for all $\tau$ and $a_{\tau_0} < 0$. Suppose that $\delta'_1/\delta_j = \prod_{\tau}\tau(x)^{b_{\tau}}$ for non-negative integers $b_\tau$, for some $1 < j \le n$. Combining with (\ref{eq:regular change triangulation}) contradicts the regularity of $\delta$. We deduce using \cite[Proposition 6.2.8]{Ked14} that $H^0_{\varphi,\gamma_{L_{\wv}}}(\delta_j(\delta'_1)^{-1}) = 0$ for all $1\le j \le n$. This contradicts the assumption that $\delta'_1$ is part of a trianguline parameter for $D$. 	 
\end{proof}

The next lemma is a small refinement
 of \cite[Thm.~6.3.13]{Ked14} (only the `moreover' part is new).
\begin{lem}\label{lem:triangulation jump at critical}
	Let $X$ be a rigid analytic space over $E$. Let $D$ be a densely
	pointwise strictly trianguline $(\varphi,\Gamma_{L_{\wv}})$-module over $\cR_{X,L_{\wv}}$ of rank $n$ with parameter $\delta \in \cT_{n,L_{\wv}}(X)$.
Then for any $z \in X$ the specialization $D_z$ is trianguline with a parameter $\delta'$, where $\delta'_i(x) = \delta_{i,z}(x)\prod_{\tau} \tau(x)^{a_{i,z,\tau}}$ for integers $a_{i,z,\tau}\in \ZZ$. 
	
	Moreover, if $\delta' \ne \delta_z$ and $i_0$ is minimal such that $\delta'_{i_0} \ne \delta_{i_0,z}$, then $a_{i_0,z,\tau}\in \ZZ_{\le 0}$ for all $\tau$. 
\end{lem}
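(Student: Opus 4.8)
The existence of a triangulation of $D_z$ with parameter $\delta'$ of the stated shape is exactly \cite[Thm.~6.3.13]{Ked14} (the global triangulation theorem of Kedlaya--Pottharst--Xiao), so only the ``moreover'' part requires proof. The plan is to run an induction on the rank $n$, re-examining the inductive construction of the triangulation of $D_z$ in the proof of \emph{loc.~cit.} For $n = 1$ there is nothing to prove, since a rank-$1$ $(\varphi,\Gamma_{L_{\wv}})$-module determines its parameter, so $\delta' = \delta_z$.

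For $n \geq 2$, recall that the first step of the triangulation of $D_z$ is a saturated rank-$1$ sub-$(\varphi,\Gamma_{L_{\wv}})$-module $\Fil_1 = \cR_{E,L_{\wv}}(\delta_1') \subseteq D_z$, obtained as the saturation of the image of a nonzero morphism $u \colon \cR_{E,L_{\wv}}(\delta_{1,z}) \to D_z$. Such a $u$ exists because the function $z' \mapsto \dim_{k(z')} H^0_{\varphi,\gamma_{L_{\wv}}}\big(\cR_{E,L_{\wv}}(\delta_{1,z'}^{-1}) \otimes D_{z'}\big)$ is upper semicontinuous on $X$ (by the finiteness and base-change properties of $\varphi$-$\gamma_{L_{\wv}}$-cohomology in families established in \cite{Ked14}) and takes a value $\geq 1$ at every point of the Zariski-dense locus of strictly trianguline points, hence at $z$. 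Now I would invoke the elementary classification of morphisms between rank-$1$ objects: a nonzero morphism $\cR_{E,L_{\wv}}(\psi_1) \to \cR_{E,L_{\wv}}(\psi_2)$ exists if and only if $\psi_1 / \psi_2 = \prod_\tau \tau(x)^{k_\tau}$ with all $k_\tau \in \ZZ_{\geq 0}$ (its cokernel being killed by a power of $t$). Factoring $u$ through $\Fil_1 = \cR_{E,L_{\wv}}(\delta_1')$, this gives $\delta_{1,z}/\delta_1' = \prod_\tau \tau(x)^{k_\tau}$ with $k_\tau \geq 0$, that is, $\delta_1' = \delta_{1,z}\prod_\tau \tau(x)^{a_{1,z,\tau}}$ with $a_{1,z,\tau} \leq 0$. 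Thus if $\delta_1' \neq \delta_{1,z}$ we have $i_0 = 1$ and are done.

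If instead $\delta_1' = \delta_{1,z}$, then $\cR_{E,L_{\wv}}(\delta_{1,z})$ is already saturated in $D_z$, so the family sub-object $\cR_{X,L_{\wv}}(\delta_1) \hookrightarrow D$ produced in the proof of \cite[Thm.~6.3.13]{Ked14} is saturated at $z$, hence (saturatedness being Zariski-open on $X$) in a neighbourhood $U$ of $z$. Then $\overline{D} := (D / \cR_{X,L_{\wv}}(\delta_1))|_U$ is a $(\varphi,\Gamma_{L_{\wv}})$-module of rank $n - 1$ over $\cR_{U,L_{\wv}}$, densely pointwise strictly trianguline with parameter $(\delta_2,\dots,\delta_n)$, and $\overline{D}_z \cong D_z/\Fil_1$ inherits the triangulation that \cite[Thm.~6.3.13]{Ked14} attaches to $\overline{D}$ at $z$, which has parameter $(\delta_2',\dots,\delta_n')$. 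Applying the inductive hypothesis to $\overline{D}$ at $z$, the least index $j \geq 2$ with $\delta_j' \neq \delta_{j,z}$ --- which is exactly $i_0$ --- satisfies $a_{i_0,z,\tau} \leq 0$ for all $\tau$, as required.

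The genuinely new content is the single elementary fact used in the second paragraph: saturating a rank-$1$ sub-$(\varphi,\Gamma_{L_{\wv}})$-module can only decrease its labelled weights. The part that requires care --- and is the main obstacle --- is the bookkeeping internal to \cite{Ked14}: one must check that the triangulation produced there is indeed the inductively built one whose first step is the saturation of the specialised family sub-line $\cR_{E,L_{\wv}}(\delta_{1,z})$, and that forming the quotient of $D$ by a rank-$1$, generically-saturated, locally free sub-object again yields a family that is densely pointwise strictly trianguline with the parameter truncated by one entry. Neither point needs a new idea, but each is extracted from the proof of \cite[Thm.~6.3.13]{Ked14} rather than its statement.
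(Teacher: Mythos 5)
Your outline matches the paper's proof closely: the existence statement is quoted from \cite[Thm.~6.3.13]{Ked14}, the ``moreover'' is handled by induction on $n$, and the base case ($i_0=1$) comes from the elementary fact that saturating a rank-one sub-$(\varphi,\Gamma_{L_{\wv}})$-module can only decrease labelled weights (which is \cite[Prop.~6.2.8]{Ked14}; the paper applies it the same way). The minor variation --- producing the map $u: \cR_{E,L_{\wv}}(\delta_{1,z}) \to D_z$ via upper semicontinuity of $\dim H^0$, rather than by specializing the family sub-object from \cite[Cor.~6.3.10]{Ked14} --- is fine.

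The gap is in the inductive step. You assert that $\Fil_1 = \cR_{X,L_{\wv}}(\delta_1)$ is saturated in $D$ in a neighbourhood $U$ of $z$ because ``saturatedness is Zariski-open,'' and that $\overline{D} = (D/\Fil_1)|_U$ is therefore a rank-$(n-1)$ $(\varphi,\Gamma_{L_{\wv}})$-module over $\cR_{U,L_{\wv}}$. This does not follow formally, and it is not extracted from \cite[Thm.~6.3.13]{Ked14}. Even knowing that $\Fil_{1,z'} \hookrightarrow D_{z'}$ is saturated at every point $z'$ of $U$ does not by itself give that $D/\Fil_1$ is a \emph{family} $(\varphi,\Gamma_{L_{\wv}})$-module --- i.e.\ base-changed from a finite projective module over some $\cR^{r_0}_{A,L_{\wv}}$ --- because the Robba ring is not Noetherian and ``finitely presented with constant fibre rank'' does not automatically give local freeness here. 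This is precisely the step the paper supplies with new work: it first uses \cite[Cor.~6.3.10]{Ked14} to reduce to $X = \mathrm{Sp}(A)$ a smooth affinoid curve whose ``bad locus'' is exactly $\{z\}$, then argues via the duality $M \mapsto M^\vee = \Hom_{\cR_{A,L_{\wv}}}(M,\cR_{A,L_{\wv}})$: the map $\alpha: D^\vee \to (\Fil_1)^\vee$ is pointwise surjective (away from $z$ by the filtration structure, and at $z$ by the saturation hypothesis), hence surjective by \cite[Lemma~2.2.11]{Ked14}; one then descends $\alpha$ to some $\cR^{r_0}_{A,L_{\wv}}$ using \cite[Lemma~2.2.9]{Ked14}, so its kernel is finite projective, and $D/\Fil_1 = (\ker\alpha^{r_0} \otimes \cR_{A,L_{\wv}})^\vee$ is a genuine $(\varphi,\Gamma_{L_{\wv}})$-module. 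You flag in your last paragraph that something here ``must be checked,'' but you misattribute it to routine bookkeeping inside \cite{Ked14}; in fact it is the essential new content of the paper's proof, and without it your induction does not get off the ground.
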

\begin{proof}
	Everything except for the `moreover' part is contained in \cite[Thm.~6.3.13]{Ked14}, so we fix the point $z \in X$. As in the proof of \emph{loc.~cit.}, we can apply \cite[Cor.~6.3.10]{Ked14} and assume that $X$ is reduced and irreducible and $D$ admits an increasing filtration $\Fil_\bullet$ by $(\varphi,\Gamma_{L_{\wv}})$-modules such that
	\begin{itemize}
		\item Each $\mathrm{Gr}_i(D)$ embeds $(\varphi,\Gamma_{L_{\wv}})$-equivariantly into $\cR_{X,L_{\wv}}(\delta_i)\otimes_{X} \cL_i$ for a line bundle $\cL_i$ over $X$, with cokernel of the embedding killed by a power of $t$. For $i=1$ this embedding is an isomorphism.
	\end{itemize}  
	We can even assume that $X = \mathrm{Sp}(A)$ is a smooth affinoid curve, pulling back along a map from a smooth curve whose image contains $z$ but which is not entirely contained in the `bad locus' $Z$ of  \cite[Cor.~6.3.10]{Ked14}. Shrinking $X$, we may assume that the bad locus is just $\{z\}$. Each specialization $\Fil_{i,z}$ is a rank $i$ $(\varphi,\Gamma_{L_{\wv}})$-module. The triangulation of $D_z$ with parameter $\delta'$ is given by taking the saturation $\Fil_i':=\Fil_{i,z}[\frac{1}{t}]\cap D_z$ for each $i$.
	
	If $\delta'_1 = \delta_{1,z}$, then $\Fil_1' = \Fil_{1,z}$. In this case, we claim that $D/\Fil_1$ is a $(\varphi,\Gamma_{L_{\wv}})$-module over $\cR_{A,L_{\wv}}$ of rank $n-1$. We recall (\cite[Defn.~2.2.12]{Ked14}) that this means that $D/\Fil_1$ is the extension of scalars of a finite projective  $(\varphi,\Gamma_{L_{\wv}})$-module over $\cR^{r_0}_{A,L_{\wv}}$ for some $r_0 > 0$. We will argue using the $\cR_{A,L_{\wv}}$-module duality $M^\vee := \Hom_{\cR_{A,L_{\wv}}}(M,\cR_{A,L_{\wv}})$ on the category of $(\varphi,\Gamma_{L_{\wv}})$-modules. 
	
	We have a map $\alpha: D^\vee \to (\Fil_1)^\vee$ such that the specialization at every point of $\mathrm{Sp}(A)$ is surjective; for the point $z$ this is a consequence of our assumption that $\Fil_1' = \Fil_{1,z}$, i.e.~that $\Fil_{1,z}$ is saturated in $D_z$, so $D_z/\Fil_{1,z}$ is projective over $\cR_{k(z),L_{\wv}}$. It follows from \cite[Lemma 2.2.11]{Ked14} that $\alpha$ is surjective. Using \cite[Lemma 2.2.9]{Ked14}, we descend $\alpha$ to a surjective map $\alpha^{r_0}: (D^\vee)^{r_0} \to ((\Fil_1)^\vee)^{r_0}$ of $(\varphi,\Gamma_{L_{\wv}})$-modules over $\cR^{r_0}_{A,L_{\wv}}$ for some $r_0 > 0$. The kernel $K:=\ker(\alpha^{r_0})$ is finite projective over $\cR^{r_0}_{A,L_{\wv}}$, since it is the kernel of a surjective map of finite projective modules. Hence $D/\Fil_1 = \left(K\otimes_{\cR^{r_0}_{A,L_{\wv}}}\cR_{A,L_{\wv}}\right)^\vee$ is a $(\varphi,\Gamma_{L_{\wv}})$-module.

	We can now induct on $n$ and reduce to the case where $\delta'_1 \ne \delta_{1,z}$. But then $\Fil'_1$ contains $\Fil_{1,z} \cong  \cR_{k(z),L_{\wv}}(\delta_{1,z})$ as a submodule, which implies our claim (using \cite[Proposition 6.2.8]{Ked14} again).
\end{proof}

\begin{proof}[Proof of Proposition \ref{prop:noncritical implies classical}]
	In the proof, assuming for contradiction that $z$ is not classical, we first find a companion point to $z$ with a non-dominant locally algebraic weight. Lemma \ref{lem:triangulation jump at critical} gives trianguline parameters $\delta'_v$ for $r_z|_{G_{L_{\wv}}}$ to which we apply Lemma \ref{lem:no change of weight companions for noncritical parameters} in order to obtain a contradiction.
	
	After extending $E$, we may assume that $z \in \cE_{n,L}(\overline{\tau})(E)$ 
	and $r_z$ takes values in $\GL_n(E)$. Let $\eta = 
	\nu(z)\delta_{B_n} = \eta_{alg}\eta_{sm}$, with $\eta_{alg}$ 
	dominant algebraic (since $\delta_{alg}$ is strictly dominant) and 
	$\eta_{sm}$ smooth. By the construction of 
	$\cE_n$, we have a non-zero space of morphisms
	
	\[0 \ne \Hom_{\prod_{v \in S_p}T_n(L_\wv)}\left( \eta, 
	J_{B_n}\left(\widetilde{S}(U_{n,L}^p,E)^{an}[\psi^*(z)]\right)\right).\] Now 
	we use some of the work of Orlik--Strauch \cite{orlik-strauch}, with 
	notation as in \cite[\S2]{Bre15}. We denote by $\gog_n$ the $\Qp$-Lie algebra of $\prod_{v 
		\in S_p}G_n(L^+_v) \cong \prod_{v \in S_p} \GL_n(L_\wv)$ and denote by $\overline{\mathfrak{b}}_n \subset 
	\gog_n$ the lower 
	triangular Borel. We define a locally analytic representation of $\prod_{v 
		\in S_p}G_n(L_v^+)$ (see \cite[Thm.~2.2]{Bre15} for the definition of 
	the functor $\cF_{\overline{B}_n}^{G_n}$):
	\[\cF_{\overline{B}_n}^{G_n}(\eta\delta_{B_n}^{-1}) := 
	\cF_{\overline{B}_n}^{G_n} 
	\left(\left(U(\gog_{n,E})\otimes_{U(\overline{\mathfrak{b}}_{n,E})}\eta_{alg}^{-1}
	\right)^\vee,\eta_{sm}\delta_{B_n}^{-1}\right).\] 
	
	The involutive functor $(\cdot)^\vee$ appearing here is exact and contravariant, and does not change the isomorphism class of irreducibles (cf.~\cite[\S3.2]{humphreys-bgg}).
	
	We write $V(\eta_{alg})$ for the algebraic representation of $\prod_{v \in S_p}G_n(L_v^+)$ with highest weight $\eta_{alg}$. The first two steps in the BGG resolution of the dual representation  $V(\eta_{alg})^\ast$ are given by the exact sequence\footnote{We use $(\cdot)^\ast$ to denote the dual representation, to avoid confusion with the involution $(\cdot)^\vee$.}
	\[\bigoplus_{\substack{w \in W\\\ell(w)=1}}U(\gog_{n,E})\otimes_{U(\overline{\mathfrak{b}}_{n,E})}(w\cdot\eta_{alg})^{-1} \to U(\gog_{n,E})\otimes_{U(\overline{\mathfrak{b}}_{n,E})}\eta_{alg}^{-1} \to V(\eta_{alg})^\ast \to 0.\]
	Here $w$ is an element of the Weyl group of 
	$(\Res_{L^+/\Q}G_n)\times_\Q E$, acting by the `dot action' defined using the half-sum of $(\Res_{L^+/\Q}B_n)\times_\Q E$-positive roots. Note that $(w\cdot\eta_{alg})^{-1} = w\bar{\cdot}(\eta_{alg}^{-1})$ where $\bar{\cdot}$ denotes the dot action defined using the half-sum of $(\Res_{L^+/\Q}\overline{B}_n)\times_\Q E$-positive roots. 
	
	The bifunctor $\cF_{\overline{B}_n}^{G_n}$ is exact and contravariant in its first argument. Applying \cite[Thm.~2.2(iii)]{Bre15} with $Q = G_n$, we obtain $\cF_{\overline{B}_n}^{G_n}(V(\eta_{alg})^\ast,\eta_{sm}\delta_{B_n}^{-1}) = V(\eta_{alg})\otimes \Ind_{\overline{B}_n}^{G_n}\eta_{sm}\delta_{B_n}^{-1}$.  We therefore obtain an exact sequence 
	\begin{equation}\label{eq:analytic BGG}\bigoplus_{\substack{w \in W\\\ell(w)=1}}\cF_{\overline{B}_n}^{G_n}((w\cdot\eta_{alg})\eta_{sm}\delta_{B_n}^{-1}) \to \cF_{\overline{B}_n}^{G_n}(\eta\delta_{B_n}^{-1}) \to V(\eta_{alg})\otimes \Ind_{\overline{B}_n}^{G_n}\eta_{sm}\delta_{B_n}^{-1} \to 0.\end{equation}
	
	By \cite[Thm.~4.3]{Bre15}, there is a non-zero space of morphisms 
	  \[0 \ne \Hom_{\prod_{v \in	S_p}G_n(L^+_v)}\left(\cF_{\overline{B}_n}^{G_n}(\eta\delta_{B_n}^{-1}),\widetilde{S}(U_{n,L}^p,E)^{an}[\psi^*(z)]\right).\] The twist by $\delta_{B_n}^{-1}$ comes from comparing the formula \cite[(18)]{Bre15} with \cite[Definition 3.4.1]{MR2292633}. The reader may find it helpful to compare with Bernstein's second adjunction; this shows that if $\chi$ is an accessible refinement of $\pi$ and $r_{N_n}\pi_v$ denotes the normalized smooth Jacquet module, then \[\Hom_{G_n(L^+_{v})}(\Ind_{\overline{B}_n}^{G_n}\left((\iota\chi_v)\delta_{B_n}^{-1/2}\right), \pi_v) = \Hom_{T_n(L^+_{v})}(\iota\chi_v, r_{N_n}\pi_v)\ne 0.\]  
	
	To complete the proof of classicality of $z$, we will show that
	\begin{equation}\label{eqn:nocompanion}\Hom_{\prod_{v \in	S_p}G_n(L^+_v)}\left(\cF_{\overline{B}_n}^{G_n}((w\cdot\eta_{alg})\eta_{sm}\delta_{B_n}^{-1}),\widetilde{S}(U_{n,L}^p,E)^{an}[\psi^*(z)]\right) = 0\end{equation} when $w \in W$ with $w\ne 1$. We only need the vanishing \eqref{eqn:nocompanion} for $w$ a simple reflection, but the same proof works for general $w$. Combined with (\ref{eq:analytic BGG}), we deduce that any map $\cF_{\overline{B}_n}^{G_n}(\eta\delta_{B_n}^{-1})\to\widetilde{S}(U_{n,L}^p,E)^{an}[\psi^*(z)]$ factors through a locally algebraic quotient of the source. The proof ends in exactly the same way as \cite[Lemma 2.30]{New21a}.
	
	Suppose, for a contradiction, that we have a non-zero map \[\cF_{\overline{B}_n}^{G_n}((w\cdot\eta_{alg})\eta_{sm}\delta_{B_n}^{-1})\to \widetilde{S}(U_{n,L}^p,E)^{an}[\psi^*(z)].\] Breuil's adjunction \cite[Thm.~4.3]{Bre15} gives us a non-zero map of $\prod_{v \in S_p}T_n(L_\wv)$-representations $(w\cdot\eta_{alg})\eta_{sm} \to 
	J_{B_n}\left(\widetilde{S}(U_{n,L}^p,E)^{an}[\psi^*(z)]\right)$, and therefore a point $z' \in \cE_{n,L}(\overline{\tau}_n)(E)$ with $r_z \cong r_{z'}$ and $\nu(z') = \nu(z)(w\cdot\eta_{alg})\eta_{alg}^{-1}$. We have $\jmath_n(\nu(z')) = (w\delta_{alg})\delta_{sm}$. Choose $v \in S_p$ with $w_v \ne 1$. 
	
	Numerically non-critical classical points accumulate at $z'$, since it has locally algebraic weight. As in the first paragraph of the proof of \cite[Lemma 2.30]{New21a}, we obtain a densely pointwise strictly trianguline family of $(\varphi,\Gamma_{L_{\wv}})$-modules over an affinoid neighbourhood of $z'$ in $\cE_{n,L}(\overline{\tau}_n)$. The parameter of this family specializes to $(w\delta_{alg,v})\delta_{sm,v}$ at $z'$. We apply Lemma \ref{lem:triangulation jump at critical} to deduce that $r_z|_{G_{L_{\wv}}}$ has a triangulation with locally algebraic parameter $\delta' = \delta'_{alg}\delta_{sm,v}$. Applying Lemma \ref{lem:no change of weight companions for noncritical parameters}, we are done as long as $\delta'_{alg} \ne \delta_{alg,v}$. 
	If $\delta'_{alg} = \delta_{alg,v}$, then the minimal $i$ such that $(w\delta_{alg,v})_{i} \ne (\delta_{alg,v})_{i}$ coincides with the minimal $i$ such that $(w\delta_{alg,v})_{i} \ne (\delta'_{alg})_{i}$.
	Denote this common minimum by $i_0$. Since $\delta_{alg}$ is strictly dominant, we have $(\delta_{alg,v})_{i_0}(x)/(w\delta_{alg,v})_{i_0}(x) = \prod_{\tau}\tau(x)^{a_{\tau}}$ for non-negative integers $a_\tau$. On the other hand, Lemma \ref{lem:triangulation jump at critical} tells us that  $(\delta'_{alg})_{i_0}(x)/(w\delta_{alg,v})_{i_0}(x)$ has the same form, with non-positive integers $a_\tau$. We deduce (again using the assumption that $\delta'_{alg} = \delta_{alg,v}$) that $\delta_{alg,v} = w\delta_{alg,v}$, which is a contradiction ($\delta_{alg,v}$ is strictly dominant and $w_v \ne 1$).
\end{proof}

\subsection{Eigenvarieties and functoriality}
We are now ready to consider the map between spaces of pseudocharacters arising from `base change of symmetric power' functoriality. We let $\overline{\tau}_2: G_{K,S}\to k$ be a conjugate self-dual pseudocharacter of dimension $2$. Let $\overline{\tau}_{n,L} = \Sym^{n-1}\overline{\tau}_2|_{G_{L,S_L}}$; then $\overline{\tau}_{n,L}$ is a conjugate self-dual pseudocharacter of dimension $n$. Taking symmetric powers and restricting to $G_L$ determines a morphism $\sigma_{n,L,\Gal}: \cX_{ps,2} \to \cX_{ps,n,L}$. On the other hand, we can define a map $\sigma_{n,p,L}: \cT_{2}\to \cT_{n,L}$  by the formula
\[(\delta_{v,1},\delta_{v,2})_{v\in S_p} \mapsto \left((\delta_{w|_{K^+},1}^{n-1},\delta_{w|_{K^+},1}^{n-2}\delta_{w|_{K^+},2},\ldots,\delta_{w|_{K^+},2}^{n-1})\circ \mathbf{N}_{L_{\ww}/K_{\ww|_{K}}}\right)_{w \in S_{p,L}}.\]

Putting the two maps together, we have $\sigma_{n,L} = \sigma_{n,L,\Gal}\times\sigma_{n,p,L}: \cX_{ps,2} \times \cT_2 \to \cX_{ps,n,L}\times \cT_{n,L}$ and a diagram
\[\xymatrix{\cE_{2}(\overline{\tau}_2)\ar[dr]^{\sigma_{n,L}\circ i_2} & & \cE_{n,L}(\overline{\tau}_{n,L})\ar[dl]^{i_n}\\ &\cX_{ps,n,L}\times \cT_{n,L} &}\]

\begin{defn}
	Let $(\pi,\chi) \in \cR\cA_2$. We say that $\chi$ is $(n, L)$-regular  if for each place $v \in S_p$ and $w|v$ in $L^+$, $\left(\frac{\chi_{v,1} \circ \mathbf{N}_{L_{\ww} / K_{\wv}}}{\chi_{v,2} \circ \mathbf{N}_{L_{\ww} / K_{\wv}}}\right)^i \neq 1$ for each $i = 1, \dots, n-1$. 
\end{defn}
\begin{lem}\label{lem:regular after BCSym}
	\begin{enumerate}
	\item Suppose $(\pi,\chi) \in \cR\cA_2$ and $\chi$ is $(n,L)$-regular. Let $z = \gamma(\pi,\chi) \in \cE_{2}(\Qpbar)$. Then $\sigma_{n,p,L}(\jmath\nu(z)) \in \cT_{n,L}(\Qpbar)$ is regular.
	
	\item Suppose $(\pi,\chi) \in \cR\cA_2$ and $\chi$ is ordinary. Then $\chi$ is $(n,L)$-regular for all $n$ and $L$.
\end{enumerate}
\end{lem}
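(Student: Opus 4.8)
The plan is to unwind the definitions of both "regular" (Definition \ref{def:regular}) and "$(n,L)$-regular", and check that the latter is precisely engineered to imply the former after applying $\sigma_{n,p,L}$.

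For part (1), write $\jmath\nu(z) = \delta = \prod_{v\in S_p}\delta_v$ with $\delta_v = (\delta_{v,1},\delta_{v,2})$, and compute that $\sigma_{n,p,L}(\delta)$ at a place $w \in S_{p,L}$ above $v \in S_p$ is the tuple $\delta'_w$ with $\delta'_{w,i} = (\delta_{v,1}^{n-i}\delta_{v,2}^{i-1})\circ \mathbf{N}_{L_{\ww}/K_{\wv}}$ for $i = 1,\dots,n$. For $1\le i<j\le n$ we then get
\[ \delta'_{w,i}/\delta'_{w,j} = \left((\delta_{v,1}/\delta_{v,2})^{j-i}\right)\circ \mathbf{N}_{L_{\ww}/K_{\wv}}. \]
So I must rule out $\left((\delta_{v,1}/\delta_{v,2})^{j-i}\right)\circ \mathbf{N}_{L_{\ww}/K_{\wv}} = x^{a_w}$ for some $a_w \in \Z_{\ge 0}^{\Hom_{\Qp}(L_{\ww},\Qpbar)}$. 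Here I would use the relation between the eigenvariety parameter $\delta_v = \jmath\nu(z)_v$ and the accessible refinement $\chi_v$: by construction (the renormalisation $\jmath$, cf.\ \cite[\S2.17]{New21a}), $\delta_{v,1}/\delta_{v,2}$ differs from $\chi_{v,1}/\chi_{v,2}$ only by an algebraic character of the form $x^{c_v}$ with $c_v$ integral (coming from the weight shift), so on the subgroup $\cO_{K_{\wv}}^\times$ (where $\mathbf{N}_{L_{\ww}/K_{\wv}}$ lands from $\cO_{L_{\ww}}^\times$, up to finite index) one has $(\delta_{v,1}/\delta_{v,2})^{j-i} = (\chi_{v,1}/\chi_{v,2})^{j-i}\cdot x^{(j-i)c_v}$. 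Matching with $x^{a_w}$ restricted to $\cO_{L_{\ww}}^\times$ forces $(\chi_{v,1}\circ\mathbf{N}/\chi_{v,2}\circ\mathbf{N})^{j-i}$ to be an algebraic character on $\cO_{L_{\ww}}^\times$; but on units $x^{a}$ has finite order only if $a = 0$, and an algebraic character restricted to units determines $a$. The cleanest route: restrict the putative identity to the finite-order part, i.e.\ evaluate on elements of $\cO_{L_{\ww}}^\times$ and use that $(j-i)\in\{1,\dots,n-1\}$, so $(n,L)$-regularity says $(\chi_{v,1}\circ\mathbf{N}_{L_{\ww}/K_{\wv}}/\chi_{v,2}\circ\mathbf{N}_{L_{\ww}/K_{\wv}})^{j-i}\ne 1$, and combine this with the integrality of the exponents to get a contradiction. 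I would then note this works for $w|v$ with $w|v$ ranging over $S_{p,L}$, covering all places of $L^+$ above $S_p$, which is exactly the index set for $\cT_{n,L}$.

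For part (2), I would argue directly. If $\chi$ is ordinary, then for each $v\in S_p$, $\chi_{v,i}(\varpi_{\wv}) \in \Zpbarx$. I claim $\chi_{v,1}/\chi_{v,2}$ is not a root of unity times an algebraic character when restricted appropriately — in fact the point is much simpler: if $(\chi_{v,1}\circ\mathbf{N}/\chi_{v,2}\circ\mathbf{N})^i = 1$ for some $1\le i\le n-1$, then in particular $(\chi_{v,1}/\chi_{v,2})^i$ is trivial on the image of $\mathbf{N}_{L_{\ww}/K_{\wv}}$, which has finite index in $K_{\wv}^\times$, hence $(\chi_{v,1}/\chi_{v,2})^i$ has finite order. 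Evaluating on $\varpi_{\wv}$ and on $\cO_{K_{\wv}}^\times$ and using that an ordinary refinement's character has values generating (together with the weight data) a group where $\chi_{v,1}/\chi_{v,2}$ cannot be finite order unless $\pi_v$ has very special structure — here I would instead invoke that an accessible, ordinary refinement $\chi$ for a point of the $\GL_2$-eigenvariety has $\chi_{v,1}/\chi_{v,2}$ of infinite order because the two refining characters are distinct and their ratio, being a smooth-times-algebraic character whose algebraic part is the nontrivial weight character $x^{k_v-1}$ (distinct Hodge--Tate weights), already has infinite order on $\varpi_{\wv}$ or is nontrivial after any finite power; more carefully, a finite power of $x^{k_v-1}$ is never trivial. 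So $(\chi_{v,1}/\chi_{v,2})^i$ is non-trivial on $\cO_{K_{\wv}}^\times$ for every $i\ge 1$, a fortiori after composing with $\mathbf{N}_{L_{\ww}/K_{\wv}}$ (which is surjective onto a finite-index subgroup of $\cO_{K_{\wv}}^\times$, still an open subgroup on which a nontrivial algebraic character stays nontrivial). This gives $(n,L)$-regularity for all $n$ and $L$.

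The main obstacle I anticipate is bookkeeping the precise relationship between the eigenvariety parameter $\jmath\nu(z)_v$ and the refinement $\chi_v$ — specifically tracking the algebraic twist introduced by the renormalisation $\jmath$ and by the shift $\delta_{B_n}$, so that the ratio $\delta_{v,1}/\delta_{v,2}$ is correctly identified with $(\chi_{v,1}/\chi_{v,2})$ up to an explicit integral power of $x$ — together with the mild subtlety that $\mathbf{N}_{L_{\ww}/K_{\wv}}$ is not surjective on unit groups, so one works on finite-index open subgroups throughout and must check that the relevant non-vanishing statements are insensitive to passing to such subgroups (true, since an algebraic character $x^a$ with $a\ne 0$ is non-trivial on every open subgroup of $\cO_{K_{\wv}}^\times$). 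Once these normalisations are pinned down, both parts are short.
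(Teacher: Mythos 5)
Part (1) of your proposal follows the same route as the paper: express $\delta_{v,1}/\delta_{v,2}$ as $\chi_{v,1}/\chi_{v,2}$ times an algebraic character, compute that $\sigma_{n,p,L}(\delta)_{w,i}/\sigma_{n,p,L}(\delta)_{w,j} = ((\delta_{v,1}/\delta_{v,2})\circ\mathbf{N}_{L_{\ww}/K_{\wv}})^{j-i}$, and observe that a smooth non-trivial character times an algebraic character can never itself be algebraic. Your bookkeeping about finite index in unit groups is not really needed here once you isolate the smooth factor, but the argument is sound.

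Part (2) contains a genuine error. You assert that $(\chi_{v,1}/\chi_{v,2})^i$ is non-trivial on $\cO_{K_{\wv}}^\times$ for every $i \ge 1$, justified by attributing to $\chi_{v,1}/\chi_{v,2}$ an ``algebraic part $x^{k_v-1}$''. But $\chi_v$ is an accessible refinement: it is a \emph{smooth} character of the torus appearing in the Jacquet module, so $\chi_{v,1}/\chi_{v,2}$ has no algebraic part at all. In fact, in the most typical situation ($\pi_v$ unramified), $\chi_{v,1}$ and $\chi_{v,2}$ are unramified and $\chi_{v,1}/\chi_{v,2}$ is \emph{trivial} on $\cO_{K_{\wv}}^\times$. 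You are conflating $\chi_v$ with the parameter $\delta_v = \jmath\nu(z)_v$; the algebraic twist $x^{c_v}$ sits in $\delta_v$, not in $\chi_v$. The correct ingredient, which the paper uses, is that ordinarity of $\delta$ together with the fact that the algebraic twist has non-zero weight forces $\chi_{v,1}(\varpi_{\wv})/\chi_{v,2}(\varpi_{\wv})$ to have non-zero $p$-adic valuation; evaluating $(\chi_{v,1}/\chi_{v,2})^i\circ\mathbf{N}_{L_{\ww}/K_{\wv}}$ at a uniformizer $\varpi_{\ww}$ then yields an element of non-zero valuation (since the unit contribution is a root of unity), so it cannot equal $1$. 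Your argument, by contrast, looks at the unit group, which is precisely where the ratio has finite order and where ordinarity gives no information.
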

\begin{proof}
	If $\delta = \jmath\nu(z)$, then $\delta_{v,1}/\delta_{v,2}$ differs from $\chi_{v,1}/\chi_{v,2}$ by an algebraic character (cf.~\cite[Lemma 2.8]{New21a}). Noting that for $w|v$ in $L$ and $1 \le i < j \le n$, \[\sigma_{n,p,L}(\delta)_{w,i}/\sigma_{n,p,L}(\delta)_{w,j} = \left(\frac{\delta_{v,1} \circ \mathbf{N}_{L_{\ww} / K_{\wv}}}{\delta_{v,2} \circ \mathbf{N}_{L_{\ww} / K_{\wv}}}\right)^{j-i}\] we see that these ratios cannot be algebraic characters when $\chi$ is $(n,L)$-regular. When $\chi$ is ordinary, $\chi_{v,1}(\varpi_{\wv})/\chi_{v,2}(\varpi_{\wv})$ has non-zero $p$-adic valuation.
\end{proof}

Now we can state our fundamental analytic continuation results.
\begin{theorem}[Analytic continuation of functoriality, non-ordinary case]\label{thm:an cont unitary non-ord}
Let $(\pi,\chi) \in \cR\cA_2$ satisfy $\tr \rbar_{\pi,\iota} = \overline{\tau}$, and let $z = \gamma(\pi,\chi) \in \cE_2(\overline{\tau})(\Qpbar)$. Suppose that:
\begin{enumerate}
	\item $\chi$ is $(n,L)$-regular and $z$ is non-critical.
	\item There exists $(\pi_{n,L},\chi_n) \in \cR\cA_{n,L}$ such that $(\sigma_{n,L}\circ i_2)(z) = i_n(z_n)$, where $z_n = \gamma_n(\pi_{n,L},\chi_n)$.
	\item For each $v \in S_p$, the Zariski closure of $r_{\pi,\iota}(G_{K_{\wv}})$ (in $\GL_2/\Qpbar$) contains $\SL_2$.
\end{enumerate}
Then each irreducible component $\cC$ of $\cE_2(\overline{\tau})_{\bC_p}$ containing $z$ satisfies $(\sigma_{n,L}\circ i_2)(\cC)\subset i_n(\cE_{n,L}(\overline{\tau}_{n,L})_{\bC_p})$.
\end{theorem}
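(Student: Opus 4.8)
The plan is to run an "analytic continuation of functoriality" argument in the style of \cite[\S2]{New21a}, using the diagram relating $\cE_2(\overline\tau)$ and $\cE_{n,L}(\overline\tau_{n,L})$ through $\cX_{ps,n,L}\times\cT_{n,L}$. First I would reduce to showing that the locus
\[ Z := \{ w \in \cC : (\sigma_{n,L}\circ i_2)(w) \in i_n(\cE_{n,L}(\overline\tau_{n,L})_{\bC_p}) \} \]
is both closed and open (in the rigid-analytic sense) in the irreducible component $\cC$; since $\cC$ is irreducible and $Z$ is non-empty (it contains $z$, by hypothesis (2)), this forces $Z = \cC$. Closedness is formal: $i_n$ is a closed immersion by construction, so $Z$ is the preimage under $\sigma_{n,L}\circ i_2$ of a closed subset. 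The content is therefore entirely in openness, i.e.\ in showing that $Z$ contains a neighbourhood of each of its points.

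\textbf{Openness via the local geometry of eigenvarieties.} To prove openness, the key is to compare tangent spaces / local rings at a point $w \in Z$. At such a $w$ we have a companion point $w_n \in \cE_{n,L}(\overline\tau_{n,L})$ with $i_n(w_n) = (\sigma_{n,L}\circ i_2)(w)$; I would first arrange (by the accumulation of non-critical, regular classical points, together with hypothesis (1) propagated along $\cC$ using Lemma \ref{lem:regular after BCSym} and the openness of the regularity and non-criticality conditions) that $w$ may be taken classical, non-critical, with $\sigma_{n,p,L}(\jmath\nu(w))$ regular, and that $w_n$ is then classical by Proposition \ref{prop:noncritical implies classical}. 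The crucial input is hypothesis (3): since the Zariski closure of $r_{\pi,\iota}(G_{K_{\wv}})$ contains $\SL_2$, the representation $\Sym^{n-1}r_{w,\iota}|_{G_{L_{\ww}}}$ is "sufficiently generic" at $p$, which (via the results on trianguline deformation spaces, e.g.\ \cite{bhs-localmodel} and \cite[Thm.~2.4]{jackreducible}) guarantees that $i_n$ is étale onto a smooth point in a neighbourhood of $w_n$ and that $\sigma_{n,L}\circ i_2$ meets $i_n(\cE_{n,L}(\overline\tau_{n,L}))$ transversally — so that the map $\cE_2(\overline\tau) \to \cX_{ps,n,L}\times\cT_{n,L}$ factors set-theoretically through $i_n$ on a whole neighbourhood of $w$. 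Concretely: the reducedness of the relevant local deformation rings, combined with the fact that $\Sym^{n-1}$ of a triangulation is again a triangulation with the expected parameter $\sigma_{n,p,L}(\delta)$, lets one produce an open neighbourhood $V$ of $w$ in $\cC$ such that $\tau^{\univ}_{n,L}$ pulls back (via $\sigma_{n,L,\Gal}$) to a pseudocharacter that genuinely lives on $\cE_{n,L}(\overline\tau_{n,L})$ over all of $V$ — i.e.\ $V \subset Z$.

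\textbf{Main obstacle.} I expect the hard part to be exactly this transversality / étaleness step at $w_n$: one must show that the companion point $w_n$ is a smooth point of $\cE_{n,L}(\overline\tau_{n,L})$ at which $i_n$ is a local isomorphism onto its (smooth) image, and that the image of $\sigma_{n,L}\circ i_2$ is not trapped in a positive-codimension "bad" sublocus. This is where hypothesis (3) does its real work — it is the $\GL_2$-analogue of the "big image at $p$" conditions used in \cite{New21a} to invoke the adjunction results of \cite{Bre15} and the trianguline-variety smoothness results — and handling the non-ordinary places (where one cannot simply quote ordinary automorphy lifting) is precisely why the Orlik--Strauch / Breuil adjunction machinery and the companion-point analysis of Lemmas \ref{lem:no change of weight companions for noncritical parameters} and \ref{lem:triangulation jump at critical} were set up above. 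Once openness is in hand, the connectedness argument on $\cC$ closes out the proof, and the ordinary case would then follow by the parallel (easier) argument using ordinary triangulations in place of the genericity hypothesis.
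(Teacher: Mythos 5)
Your overall reduction---$Z$ is closed because $i_n$ is a closed immersion, and then it suffices by irreducibility of $\cC$ to show $Z$ contains a non-empty open subset---is indeed the shape of the argument behind \cite[Theorem 2.24]{New21a}, to which the paper's proof is a one-line citation. But you aim for more than you need: you do not have to show $Z$ contains a neighbourhood of \emph{each} of its points, only of the distinguished classical point $z$ (a closed subset of an irreducible rigid space that contains a non-empty open set is the whole space). Your phrase ``I would first arrange that $w$ may be taken classical'' implicitly concedes this; it is cleaner, and is what the cited proof does, to fix $w = z$ from the outset rather than attempt a genuinely local argument at an arbitrary $w \in Z$, which would not go through.

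There is also a misattribution of the auxiliary machinery. The Orlik--Strauch / Breuil adjunction and the companion-point Lemmas \ref{lem:no change of weight companions for noncritical parameters} and \ref{lem:triangulation jump at critical} are set up in this paper to prove the classicality criterion Proposition \ref{prop:noncritical implies classical}, which is invoked \emph{after} the analytic continuation theorems, in Corollary \ref{cor: main U2 application}; they play no role in the proof of Theorem \ref{thm:an cont unitary non-ord} itself. The technical core of \cite[Theorem 2.24]{New21a} is rather the local identification of $\cE_2(\overline\tau)$ near $z$ and of $\cE_{n,L}(\overline{\tau}_{n,L})$ near $z_n$ with unions of irreducible components of trianguline deformation spaces, together with the fact that hypothesis (3) (big image of $r_{\pi,\iota}|_{G_{K_{\wv}}}$ for each $v\in S_p$) forces those deformation spaces to be formally smooth of the expected dimension---this is where your ``\'etaleness/transversality'' intuition is made precise, and you are right that this is the genuine content. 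The only novelty the paper records beyond the citation is that the hypothesis ``$\chi$ numerically non-critical'' from \emph{loc.\ cit.}\ may be relaxed to ``$z$ non-critical,'' and that $(n,L)$-regularity of $\chi$ is exactly what ensures $\sigma_{n,L}(\delta)$ remains a regular (and, since $z$ is non-critical, non-critical) trianguline parameter for $r_{\pi_{n,L},\iota}$.
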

\begin{proof}
	The proof is identical to that of \cite[Theorem 2.24]{New21a}. In \emph{loc.~cit.} we assumed that $\chi$ is a numerically non-critical refinement, but we only use the consequence that the associated trianguline parameter $\delta$ for $r_{\pi,\iota}$ is non-critical. The assumption that $\chi$ is $(n,L)$-regular is what we need to ensure that $\delta_{n,L} = \sigma_{n,L}(\delta)$ is a regular trianguline parameter for $r_{\pi_{n,L},\iota}$ (since $z$ is non-critical, it will also be a non-critical parameter).
\end{proof}

\begin{theorem}[Analytic continuation of functoriality, ordinary case]\label{thm:an cont unitary ord}
	Let $(\pi,\chi) \in \cR\cA_2$ satisfy $\tr \rbar_{\pi,\iota} = \overline{\tau}$, and let $z = \gamma(\pi,\chi) \in \cE_2(\overline{\tau})(\Qpbar)$. Suppose that:
	\begin{enumerate}
		\item $\chi$ is ordinary.
		\item There exists $(\pi_{n,L},\chi_n) \in \cR\cA_{n,L}$ such that $(\sigma_{n,L}\circ i_2)(z) = i_n(z_n)$, where $z_n = \gamma_n(\pi_{n,L},\chi_n)$.
		\item The Zariski closure of $r_{\pi,\iota}(G_{K})$ contains $\SL_2$.
	\end{enumerate}
	Then each irreducible component $\cC$ of $\cE_2(\overline{\tau})_{\bC_p}$ containing $z$ satisfies $(\sigma_{n,L}\circ i_2)(\cC)\subset i_n(\cE_{n,L}(\overline{\tau}_{n,L})_{\bC_p})$.
\end{theorem}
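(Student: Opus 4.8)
The plan is to deduce Theorem~\ref{thm:an cont unitary ord} from Theorem~\ref{thm:an cont unitary non-ord} by verifying that its hypotheses are implied by the ordinary hypotheses, so that no new geometric work is required. Indeed, the proof of Theorem~\ref{thm:an cont unitary non-ord} only used the non-criticality of the trianguline parameter $\delta$ for $r_{\pi,\iota}$ and the $(n,L)$-regularity of $\chi$; I would check that both follow from ordinarity.

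First I would record that, by Lemma~\ref{lem:regular after BCSym}(2), an ordinary $\chi$ is automatically $(n,L)$-regular for all $n$ and $L$, so hypothesis~(1) of Theorem~\ref{thm:an cont unitary non-ord} holds in its $(n,L)$-regularity aspect. Next I would address non-criticality: since $\chi$ is ordinary, the Remark following the definition of ordinary refinements (citing \cite[Theorem 2.4]{jackreducible}) gives that $r_{\pi,\iota}|_{G_{K_{\wv}}}$ admits a non-critical triangulation with parameter $\delta_v$ for each $v \in S_p$, which is exactly the statement that $z = \gamma(\pi,\chi)$ is a non-critical point. (One should note $z \in \cE_2(\overline{\tau})^{\mathrm{irred}}$ here, which is automatic from hypothesis~(3): the Zariski closure of $r_{\pi,\iota}(G_K)$ containing $\SL_2$ forces $r_{\pi,\iota}|_{G_K}$, hence $\tau_z$, to be absolutely irreducible.) Thus hypothesis~(1) of Theorem~\ref{thm:an cont unitary non-ord} is satisfied in full.

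Hypothesis~(2) of Theorem~\ref{thm:an cont unitary non-ord} is identical to hypothesis~(2) here, so nothing needs to be done. For hypothesis~(3) of Theorem~\ref{thm:an cont unitary non-ord}, I would observe that it asks for the Zariski closure of $r_{\pi,\iota}(G_{K_{\wv}})$ to contain $\SL_2$ for each $v \in S_p$; this is weaker than hypothesis~(3) here (Zariski closure of $r_{\pi,\iota}(G_K)$ contains $\SL_2$), since restricting to a finite-index subgroup such as $G_{K_{\wv}}$ cannot shrink a Zariski closure that already contains the connected group $\SL_2$ — more precisely, if the closure $H$ of $r_{\pi,\iota}(G_K)$ contains $\SL_2$ then $H^\circ \supseteq \SL_2$, and the closure of the image of the finite-index subgroup $G_{K_{\wv}}$ has the same identity component, hence also contains $\SL_2$. (Alternatively this is the statement that $r_{\pi,\iota}|_{G_{K_{\wv}}}$ has large image, which follows from \cite{Ribet-nebentypus}-type arguments or can simply be quoted.) With all three hypotheses of Theorem~\ref{thm:an cont unitary non-ord} verified, its conclusion is precisely the conclusion sought, and we are done. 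The only mild subtlety — and the step I would be most careful about — is the identity-component argument for hypothesis~(3); everything else is a direct bookkeeping reduction to the already-proved non-ordinary case.

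\begin{proof}
	We deduce this from Theorem~\ref{thm:an cont unitary non-ord}. First, since $\chi$ is ordinary, Lemma~\ref{lem:regular after BCSym}(2) shows that $\chi$ is $(n,L)$-regular. Next, the Remark following the definition of ordinary refinements shows that for each $v \in S_p$, $r_{\pi,\iota}|_{G_{K_{\wv}}}$ admits a non-critical triangulation with parameter $\delta_v$; since hypothesis~(3) implies that $r_{\pi,\iota}|_{G_K}$ is absolutely irreducible (the Zariski closure of its image contains $\SL_2$), we have $z \in \cE_2(\overline{\tau})^{\mathrm{irred}}(\Qpbar)$ and $r_z \cong r_{\pi,\iota}|_{G_K}$, so $z$ is a non-critical point. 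Thus hypothesis~(1) of Theorem~\ref{thm:an cont unitary non-ord} holds. Hypothesis~(2) is identical in the two statements. Finally, if the Zariski closure $H$ of $r_{\pi,\iota}(G_K)$ in $\GL_2/\Qpbar$ contains $\SL_2$, then $\SL_2 \subseteq H^\circ$; since $G_{K_{\wv}} \le G_K$ is of finite index, the Zariski closure of $r_{\pi,\iota}(G_{K_{\wv}})$ is a finite-index closed subgroup of $H$, hence has the same identity component $H^\circ$, and therefore also contains $\SL_2$. This verifies hypothesis~(3) of Theorem~\ref{thm:an cont unitary non-ord}, and its conclusion gives exactly what we want.
\end{proof}
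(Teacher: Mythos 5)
Your reduction to Theorem~\ref{thm:an cont unitary non-ord} fails at exactly the step you flagged as subtle, and it fails irreparably. The group $G_{K_{\wv}}$ is the decomposition group at a place of $K$, i.e.\ the absolute Galois group of the \emph{local} field $K_{\wv}$; it is \emph{not} a finite-index subgroup of $G_K$. So the "finite-index subgroups have the same identity component" argument does not apply, and the global big-image hypothesis~(3) of Theorem~\ref{thm:an cont unitary ord} does not propagate to the local big-image hypothesis~(3) of Theorem~\ref{thm:an cont unitary non-ord}. Worse, in the ordinary case that local hypothesis is actually \emph{incompatible} with hypothesis~(1): if $\chi$ is an ordinary refinement, then $r_{\pi,\iota}|_{G_{K_{\wv}}}$ is an extension of two characters, so the Zariski closure of its image lies in a Borel of $\GL_2$ and cannot contain $\SL_2$. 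This is precisely why the paper states the ordinary case as a separate theorem with a different (global) image hypothesis, and points it out explicitly in the remark after Corollary~\ref{cor: main U2 application}: ``in the ordinary case, we don't need any big image assumption on $r_{\pi',\iota}$.''

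The paper's proof is not a formal reduction at all: it appeals to a genuinely different argument, \cite[Theorem 2.27]{New21a}, which treats the ordinary locus using ordinary (Hida-theoretic) deformation rings rather than the trianguline/eigenvariety geometry at points with irreducible local Galois representations used for \cite[Theorem 2.24]{New21a} (the template for Theorem~\ref{thm:an cont unitary non-ord}). The parts of your write-up that check $(n,L)$-regularity via Lemma~\ref{lem:regular after BCSym}(2) and non-criticality via the remark following the definition of ordinary refinement are fine, but the missing ingredient --- a substitute for the local big-image condition in the ordinary setting --- is the entire content of the theorem, so the proof as written has a gap that cannot be patched along these lines.
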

\begin{proof}
	This proof is identical to that of \cite[Theorem 2.27]{New21a}. 
\end{proof}

To complete our results in the unitary setting, we combine the preceding theorems with the classicality criterion given by Proposition \ref{prop:noncritical implies classical}.

\begin{cor}\label{cor: main U2 application}
	Let $(\pi, \chi), (\pi', \chi') \in \mathcal{RA}_2$, and let $z, 
z' \in \cE_2(\Qpbar)$ be the corresponding classical points of the eigenvariety. 
Suppose that one of the following two sets of conditions are satisfied:
\begin{enumerate}
	\item The points $z$ and $z'$ are non-critical and the refinements $\chi$ and $\chi'$ are $(n,L)$-regular.
	\item For each $v \in S_p$, the Zariski closures of the images of $r_{\pi, \iota}|_{G_{K_\wv}}$ and $r_{\pi',\iota}|_{G_{K_\wv}}$ contain $\SL_2$.
	\item The points $z$ and $z'$ lie on a common irreducible component of $\cE_{2,\Cp}$;
\end{enumerate}	or
\begin{enumerate}
	\item[(1\textsuperscript{ord})] The refinements $\chi$ and $\chi'$ are ordinary.
	\item[(2\textsuperscript{ord})] The Zariski closures of the images of (the global representations) $r_{\pi, \iota}$ and $r_{\pi', \iota}$ contain $\SL_2$.
	\item[(3\textsuperscript{ord})] The points $z$ and $z'$ lie on a common irreducible component of	$\cE_{2,\Cp}$.
\end{enumerate}	
Suppose moreover that there exists an automorphic representation $\pi_{n,L}$ of 
$G_n(\bA_{L^+})$ such that 
\[ \Sym^{n-1} r_{\pi, \iota}|_{G_L} \cong r_{\pi_{n,L}, \iota}. \] Then there exists an automorphic representation $\pi'_{n,L}$ of 
$G_n(\bA_{L^+})$ such that \[ \Sym^{n-1} r_{\pi', \iota}|_{G_L} \cong r_{\pi'_{n,L}, \iota}. \]
\end{cor}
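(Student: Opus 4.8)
The plan is to deduce Corollary \ref{cor: main U2 application} from the analytic continuation theorems (Theorems \ref{thm:an cont unitary non-ord} and \ref{thm:an cont unitary ord}) together with the classicality criterion of Proposition \ref{prop:noncritical implies classical}, by propagating the hypothesis from $z$ to $z'$ along a common irreducible component. First I would fix a prime $p$ and the isomorphism $\iota$, and set $\overline{\tau} = \tr \rbar_{\pi,\iota}$; observe that because $z$ and $z'$ lie on a common irreducible component of $\cE_{2,\Cp}$, we have $\tr \rbar_{\pi,\iota} = \tr \rbar_{\pi',\iota} = \overline{\tau}$, so both points lie in $\cE_2(\overline{\tau})(\Qpbar)$. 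Now in either case (the generic/non-ordinary case with hypotheses (1)--(3), or the ordinary case with (1\textsuperscript{ord})--(3\textsuperscript{ord})) I would apply the appropriate analytic continuation theorem with input point $z$: the existence of $\pi_{n,L}$ with $\Sym^{n-1} r_{\pi,\iota}|_{G_L} \cong r_{\pi_{n,L},\iota}$ supplies the point $z_n = \gamma_n(\pi_{n,L},\chi_n)$ with $(\sigma_{n,L}\circ i_2)(z) = i_n(z_n)$ required by hypothesis (2) of those theorems (here one takes $\chi_n$ to be the refinement of $\pi_{n,L}$ whose parameter is $\sigma_{n,p,L}(\jmath\nu(z))$, which is classical because $z$ is). The conclusion is that every irreducible component $\cC$ of $\cE_2(\overline{\tau})_{\Cp}$ through $z$ satisfies $(\sigma_{n,L}\circ i_2)(\cC) \subseteq i_n(\cE_{n,L}(\overline{\tau}_{n,L})_{\Cp})$.

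Next I would take $\cC$ to be a component containing both $z$ and $z'$ (which exists by hypothesis (3), resp.\ (3\textsuperscript{ord})). Then $(\sigma_{n,L}\circ i_2)(z') = i_n(w)$ for some point $w \in \cE_{n,L}(\overline{\tau}_{n,L})_{\Cp}$. After enlarging $E$ we may assume $z'$ and $w$ are defined over $E$. The point $w$ has $r_w \cong \Sym^{n-1} r_{z'}|_{G_L} = \Sym^{n-1} r_{\pi',\iota}|_{G_L}$, since $\lambda(w)$ is determined by $\sigma_{n,L,\Gal}(\lambda(i_2(z')))$; by the irreducibility remark at the start of Section \ref{sec_preliminaries} (the Zariski closure of $r_{\pi',\iota}(G_{\bQ})$, hence of $r_{\pi',\iota}(G_K)$, contains $\SL_2$, as $\pi'$ is non-CM), $r_w$ is absolutely irreducible, so $w \in \cE_{n,L}^{\mathrm{irred}}(\overline{\tau}_{n,L})(E)$ and $r_w = r_{z'}$ is literally this symmetric power. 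It remains to show $w$ is classical, i.e.\ $w \in Z_{n,L}$; for this I would invoke Proposition \ref{prop:noncritical implies classical}, which requires checking that $w$ is non-critical and that $\delta(w) = \jmath\nu(w) = \sigma_{n,p,L}(\jmath\nu(z'))$ is a regular parameter. Regularity follows from Lemma \ref{lem:regular after BCSym}: in case (1) the refinement $\chi'$ is $(n,L)$-regular by hypothesis, and in the ordinary case $\chi'$ is ordinary, hence $(n,L)$-regular for all $n, L$. Non-criticality of $w$ follows because $z'$ is non-critical (in case (1) by hypothesis; in the ordinary case because ordinary refinements give non-critical triangulations by the Remark following the definition of ordinary) and $\Sym^{n-1}$ of a non-critical triangulation with ordinary/strictly-dominant weights is again non-critical — this is exactly the bookkeeping used in the proof of Theorem \ref{thm:an cont unitary non-ord}, where $\sigma_{n,L}(\delta)$ is checked to be a non-critical parameter for $r_{\pi_{n,L},\iota}$.

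Once $w \in Z_{n,L}$, it is the classical point $\gamma_n(\pi'_{n,L}, \chi'_n)$ for some $(\pi'_{n,L}, \chi'_n) \in \cR\cA_{n,L}$, and by local-global compatibility $r_{\pi'_{n,L},\iota} = r_w \cong \Sym^{n-1} r_{\pi',\iota}|_{G_L}$, which is the desired conclusion.

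The main obstacle I expect is the non-criticality of the companion point $w$: one has to translate the statement ``$r_{z'}|_{G_{L_{\ww}}}$ admits a non-critical triangulation of parameter $\delta_{v}$'' through the operation $\rho \mapsto \Sym^{n-1}\rho$ and check that the resulting triangulation of $\Sym^{n-1} r_{z'}|_{G_{L_{\ww}}}$ with parameter $\sigma_{n,p,L}(\delta)_w$ again has strictly increasing labelled weights. In the non-ordinary case this is the content of the parameter analysis already carried out in the proof of Theorem \ref{thm:an cont unitary non-ord} (so I would simply cite it), and in the ordinary case it is immediate since an ordinary triangulation has all parameters valued in $\Zpbarx$ and its symmetric power inherits this, reducing everything to the ordinary/crystalline bookkeeping. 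A secondary point to be careful about is matching up the normalisations of $\jmath\nu$, $\sigma_{n,p,L}$, and the norm maps $\mathbf{N}_{L_{\ww}/K_{\wv}}$ so that the identity $i_n(w) = (\sigma_{n,L}\circ i_2)(z')$ really does force both the Galois and the torus coordinates of $w$, but this is routine given the definitions recalled above.
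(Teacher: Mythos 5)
Your proposal is correct and takes essentially the same route as the paper: apply the appropriate analytic continuation theorem (\ref{thm:an cont unitary non-ord} or \ref{thm:an cont unitary ord}) to produce a point $z_n'$ (your $w$) with $i_n(z_n') = (\sigma_{n,L}\circ i_2)(z')$, then verify non-criticality and regularity of its parameter and conclude by Proposition~\ref{prop:noncritical implies classical}. The paper's own proof is terser (deferring the bookkeeping to \cite[Corollary 2.28]{New21a}), but the steps — choosing $\overline{\tau}_2 = \tr\overline{r}_{\pi,\iota}$, producing the refinement $\chi_n$ with $(\sigma_{n,L}\circ i_2)(z) = i_n(\gamma_n(\pi_{n,L},\chi_n))$, and checking the classicality criterion for the resulting point over $z'$ — are exactly what you carry out.
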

\begin{proof}
This is proved in the same way as \cite[Corollary 2.28]{New21a}. Choose $U_{n,L} \subset G_n(\bA_{L^+}^\infty)$ so that $(\pi_{n,L}^\infty)^{U_{n,L}^{p}} \neq 
0$ and take $\overline{\tau}_2 = \tr \overline{r}_{\pi, \iota}$. As in \emph{loc.~cit.}, there is an accessible refinement $\chi_n$ of $\pi_{n,L}$ such that $(\sigma_{n,L} 
\circ i_2)(z_2) = i_n(\gamma_n(\pi_{n,L},\chi_n)) \in i_n(\cE_{n,L}(\overline{\tau}_{n,L})(\Qpbar))$. Applying Theorem \ref{thm:an cont unitary non-ord} or Theorem \ref{thm:an cont unitary ord}, we deduce the existence of a point $z_n' \in \cE_{n,L}(\overline{\tau}_{n,L})$ with $i_n(z_n') = \sigma_{n,L}(i_2(z'))$. Our assumption that $z'$
is non-critical and $\chi'$ is $(n,L)$-regular (or that $\chi'$ is ordinary) implies that $z_n'$ is non-critical with a regular parameter. Proposition \ref{prop:noncritical implies classical} tells us that $z_n'$ is classical, which gives us the desired $\pi'_{n,L}$.
\end{proof}
\begin{rem}
	The assumption that $z$ and $z'$ lie in a common irreducible component implies that one of the refinements $\chi, \chi'$ is ordinary if and only if the other is (cf.~\cite[Lemma 2.26]{New21a}). We have imposed the same technical assumptions on the two points $z$ and $z'$ for simplicity. In the non-ordinary case, we only need to assume that the global representation $r_{\pi',\iota}$ has big image; in the ordinary case, we don't need any big image assumption on $r_{\pi',\iota}$.
\end{rem}
\subsection{Back to $\GL_2$}
Let $F$ be a totally real field. Let $N \geq 1$ 
be an integer and let $p$ be a prime, and let $\cE_{0}$ denote the associated 
cuspidal tame level $N$ eigencurve. Fix an isomorphism $\iota : 
\overline{\bQ}_p \to \bC$. Following the notation of \cite{New21a}, we write 
$\mathcal{RA}_0$ for the set of pairs $(\pi_0, \chi_0)$, where $\pi_0$ is a 
cuspidal, regular algebraic automorphic representation of $\GL_2(\bA_\bQ)$ of weight $k \ge 2$ such that $(\pi_0^{\infty, p})^{U_1(N)^p} \neq 0$, and $\chi_0 = \chi_{0, 1} \otimes \chi_{0, 2}$ is an accessible refinement of $\pi_0$ such that $\chi_{0, 1}$ is 
\emph{unramified}. Any such pair determines an associated classical point $z_0 \in \cE_0(\overline{\bQ}_p)$.

We fix an integer $n \geq 2$.
\begin{defn}
Let $(\pi_0, \chi_0) \in \cR\cA_0$. We say that $\chi_0$ is $(n, F)$-regular if for each place $v | 
p$ of $F$, $\left(\frac{\chi_{0, 1} \circ \mathbf{N}_{F_v / \bQ_p}}{\chi_{0, 2} \circ \mathbf{N}_{F_v / \bQ_p}}\right)^i \neq 1$ for each $i = 1, \dots, n-1$. 
\end{defn}
When $p$ splits in $F$, $(n, F)$-regularity is the `$n$-regular'  condition of \cite{New21a}. If $\chi_0$ is $M$-regular for some $M > n[F: \bQ]$, then $\chi_0$ is $(n, F)$-regular. 
	\begin{proposition}\label{prop_analytic_continuation}
Let $(\pi_0, \chi_0)$, $(\pi'_0, \chi'_0) \in \mathcal{RA}_0$, and let $z_0, z_0'$ be the associated classical points. Suppose that one of the following two sets of conditions is satisfied:
\begin{enumerate}
\item The Zariski closures of $r_{\pi_0, \iota}(G_{\bQ_p})$ and $r_{\pi_0', 
\iota}(G_{\bQ_p})$ contain $\SL_2$.\label{assm:pbig}
\item $\chi_{0}$ and $\chi'_{0}$ are $(n, F)$-regular.
\end{enumerate}
or
\begin{enumerate}
\item [$(1^{ord})$] The refinements $\chi_0$ and $\chi_0'$ are ordinary (concretely, this means that $v_p(\chi_{0,1}(p)) = v_p(\chi'_{0,1}(p)) = -1/2$).
\item [$(2^{ord})$] $\pi_0$ and $\pi_0'$ are not CM.
\end{enumerate}
Suppose moreover that the points $z_0, z_0'$ lie on a common irreducible component of $\cE_{0, \bC_p}$. Then $\mathrm{BC}_{F / \bQ}(\Sym^{n-1} \pi_0)$ exists if and only if $\mathrm{BC}_{F / \bQ}(\Sym^{n-1} \pi'_0)$ exists.
\end{proposition}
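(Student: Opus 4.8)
The strategy is to transfer the question from $\GL_2/\bQ$ to the definite unitary group setting, where Corollary \ref{cor: main U2 application} applies, and then transfer back using soluble base change. First I would use Lemma \ref{lem_OK_after_base_change} to replace the assertion ``$\mathrm{BC}_{F/\bQ}(\Sym^{n-1}\pi_0)$ exists'' by the assertion that there is a polarizable automorphic representation of $\GL_n(\bA_{F'})$ whose associated Galois representation is $\Sym^{n-1} r_{\pi_0,\iota}|_{G_{F'}}$, for a suitable soluble totally real extension $F'/\bQ$ that we are free to enlarge. Enlarging $F'$ further, I would arrange that $F'$ contains an imaginary quadratic field so that $K = F' \cdot (\text{im.\ quad.})$ is CM with $K/K^+$ everywhere unramified and $[K^+:\bQ]$ even (possibly after a further soluble extension, using that $r_{\pi_0,\iota}$ has big image so base change stays cuspidal, again by Lemma \ref{lem_OK_after_base_change}); simultaneously I would arrange enough splitting hypotheses at $p$ and at the ramified places of $\pi_0$ so that the descent of $\Sym^{n-1}\pi_0|_{G_{K^+}}$ (resp.\ $\pi_0|_{G_{K^+}}$) to the unitary groups $G_n$ (resp.\ $G_2$) over $K^+$ is available, exactly as set up in \S\ref{subsec:unitary}.

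Next I would translate the hypotheses of the proposition into the hypotheses of Corollary \ref{cor: main U2 application}. The classical points $z_0, z_0'$ on $\cE_0$ give rise, after descent to $G_2$ over $K^+$, to classical points $z_2, z_2'$ on $\cE_2$; the condition that $z_0, z_0'$ lie on a common irreducible component of $\cE_{0,\bC_p}$ is inherited by $z_2, z_2'$ on $\cE_{2,\bC_p}$ (the eigencurve for $\GL_2/\bQ$ and the $U(2)$-eigenvariety being closely related, with matching Hecke eigensystems and local geometry at these points). The big-image hypotheses transfer: if the Zariski closure of $r_{\pi_0,\iota}(G_{\bQ_p})$ contains $\SL_2$, so does that of $r_{\pi_0,\iota}(G_{K_{\wv}})$ for $v \in S_p$ (the Zariski closure is unchanged by passing to a finite-index subgroup once it contains $\SL_2$); if $\pi_0$ is not CM, then the Zariski closure of the global image $r_{\pi_0,\iota}(G_K)$ contains $\SL_2$ by the remark at the start of \S\ref{sec_preliminaries}. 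The $(n,F)$-regularity of $\chi_0$, transported through the norm maps, gives $(n,K^+)$-regularity of the refinement $\chi$ of the descended representation on $G_2$; ordinary refinements descend to ordinary refinements; non-criticality of $z_0$ (which holds here because classical points of $\cE_0$ coming from $\cR\cA_0$ have small slope / are non-critical in the required sense, or can be arranged to be) gives non-criticality of $z_2$. Thus one of the two bulleted sets of hypotheses of Corollary \ref{cor: main U2 application} is satisfied.

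Then I would invoke Corollary \ref{cor: main U2 application}: the hypothesis that $\Sym^{n-1}\pi_0$ base-changes to $F$ means (after the soluble manipulations above, and descent to $G_n$ over $K^+$, again using Lemma \ref{lem_OK_after_base_change}) that there is an automorphic representation $\pi_{n,L}$ of $G_n(\bA_{L^+})$ with $\Sym^{n-1} r_{\pi_0,\iota}|_{G_L} \cong r_{\pi_{n,L},\iota}$, where $L^+ = F'^+$ (or a further soluble extension) and $L = L^+ K$. The corollary then produces $\pi'_{n,L}$ with $\Sym^{n-1} r_{\pi_0',\iota}|_{G_L} \cong r_{\pi'_{n,L},\iota}$. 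Running soluble base change in reverse (Lemma \ref{lem_OK_after_base_change}, applied now with $\pi_0'$ in place of $\pi_0$) descends this to an automorphic representation of $\GL_n(\bA_F)$ with the right Galois representation, i.e.\ $\mathrm{BC}_{F/\bQ}(\Sym^{n-1}\pi_0')$ exists. The argument is symmetric in $\pi_0$ and $\pi_0'$, so this gives the ``if and only if''.

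The main obstacle is the bookkeeping in the first paragraph: choosing the auxiliary CM field $K$ and the soluble extension $L^+/\bQ$ so that \emph{all} of the running hypotheses of \S\ref{subsec:unitary} hold simultaneously (everywhere-unramified $K/K^+$, even degree, splitting of $p$ and of the ramified primes of $\pi_0$ and $\pi_0'$ in $K$), while keeping control of the refinements and of the irreducible-component hypothesis under descent, and while ensuring that cuspidality is preserved at each base-change step so that Lemma \ref{lem_OK_after_base_change} is applicable. One also has to check that the common-irreducible-component condition really is preserved when passing between the $\GL_2$-eigencurve and the $U(2)$-eigenvariety; this is where one leans on the precise comparison of eigenvarieties, and it is carried out in \cite{New21a} for $F = \bQ$, the present situation being reduced to that one after the soluble extension. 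Everything else is a routine transfer of hypotheses through norm maps and Jacquet functors.
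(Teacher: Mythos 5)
Your proposal is correct and follows the paper's route: reduce, via Lemma \ref{lem_OK_after_base_change}, to producing a polarizable automorphic representation over a suitable soluble CM extension; descend to the definite unitary group $G_2/K^+$ to obtain classical points $z, z'$ on $\cE_2$ lying on a common irreducible component; check that the hypotheses of Corollary \ref{cor: main U2 application} hold; apply it; and climb back down with soluble base change for $\GL_n$. This is exactly what the paper does, following \cite[Thm.~2.33]{New21a} and citing \cite[Theorem 1.4]{New21a} for the descent to $U(2)$ and the preservation of the common-component condition.

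Two places where your write-up is vaguer than the paper and where a careful reader would push back. First, your justification of non-criticality of $z_0, z_0'$ (``have small slope / are non-critical in the required sense, or can be arranged to be'') hedges where no hedge is needed: the paper observes that the only critical classical points of $\cE_0$ are non-ordinary refinements of \emph{ordinary} $\pi$, and the hypotheses of the proposition exclude exactly this case --- in the non-ordinary branch, condition \ref{assm:pbig} forces $r_{\pi_0,\iota}|_{G_{\bQ_p}}$ to be irreducible, hence $\pi_0$ is not ordinary; in the ordinary branch, the chosen refinement is ordinary, hence non-critical. You should make this explicit rather than suggesting an ``arrangement''. Second, the claim that $(n,F)$-regularity of $\chi_0$ yields $(n,L)$-regularity (you wrote $(n,K^+)$-regularity, a slip) of the descended refinement $\chi$ is asserted without unwinding the norm maps; the paper also treats this briefly, with Lemma \ref{lem:regular after BCSym} handling the ordinary case, but you should at least note that the norms $\mathbf{N}_{L_{\ww}/K_{\wv}}$ and $\mathbf{N}_{F_v/\bQ_p}$ are compatible under the field diagram so the condition transfers. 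Neither gap is fatal; the overall argument is the paper's.
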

\begin{proof}

To prove the proposition, we follow closely the proof of 
\cite[Thm.~2.33]{New21a}. We let $K/\Q$ 
be the soluble Galois CM extension denoted $F$ in \emph{loc.~cit.}~and set $L^+ = K^+F$, $L = KL^+$. In particular, $L/F$ is a soluble Galois CM extension with $L/L^+$ everywhere unramified, and in which every prime of $F$ dividing $Np$ splits. 
By Lemma \ref{lem_OK_after_base_change}, it suffices to show that $\mathrm{BC}_{L^+ / \bQ}(\Sym^{n-1}\pi_0')$ exists.

Let $S$ denote the set of places of $K^+$ dividing $Np$ and fix factorisations $v = \wv\wv^c$ for $v \in S$. We have defined unitary groups over $K^+$ in \S\ref{subsec:unitary}. Our level subgroup $U_2 = \prod_v U_{2,v}\subset G_2(\bA_{K^+}^\infty)$ is hyperspecial maximal compact at $v \not\in S$ and corresponds to the standard level subgroup $U_1(N)_l \subset \GL_2(\Z_l)$ at places $v \in S$ with residue characteristic $l$ (these places are split over $\Q$).

As in the proof of \cite[Thm.~2.33]{New21a}, soluble base change to $K$, twisting by a character and descending to the unitary group $U_2/K^+$ \cite[Theorem 1.4]{New21a} gives us two classical points 
$z = \gamma_2(\pi,\chi), z' = \gamma_2(\pi',\chi')$ in $\cE_2(\Qpbar)$ contained in a single geometrically irreducible 
component of $\cE_2$ with the following properties:
\begin{enumerate}
	\item $i_2(z) = (\tr r_z, \delta_z)$ and $i_2(z') = (\tr r_{z'}, 
	\delta_{z'})$, where $r_z$ and $r_{z'}$ are absolutely irreducible 
	representations $G_{K,S} \to 
	\GL_2(\Qpbar)$ which are conjugate self-dual twists of 
	$r_{\pi_0,\iota}|_{G_{K}}$ and $r_{\pi'_0,\iota}|_{G_{K}}$ 
	respectively. 
	\item $z$ and $z'$ are non-critical.
	\item $\chi$ and $\chi'$ are $(n,L)$-regular.
	\item The Zariski closures of the images of $r_{\pi, \iota}$ and $r_{\pi', \iota}$ contain $\SL_2$.
	\item Either $\chi$ and $\chi'$ are ordinary, or for each $v \in S_p$, the Zariski closures of the images of $r_{\pi, \iota}|_{G_{K_\wv}}$ and $r_{\pi',\iota}|_{G_{K_\wv}}$ contain $\SL_2$.
\end{enumerate} 
The first property is satisfied by construction. The second property follows from non-criticality of the points $z_0$ and $z'_0$; the only critical classical points of $\cE_0$ come from the non-ordinary refinement of an ordinary $\pi$ (cf.~\cite[Example 2.10]{New21a}). The third property follows from our $(n,F)$-regularity assumption, which is automatic in the ordinary case (cf.~Lemma \ref{lem:regular after BCSym}). In the ordinary case, the fourth property follows from the non-CM assumption. The fifth property is immediate from our assumptions. 

Now assume that $\mathrm{BC}_{F / \bQ}(\Sym^{n-1} \pi_0)$ exists. We can apply a further soluble base change to $L$. As a consequence, there exists an automorphic representation $\pi_{n,L}$ of $G_n(\bA_{L^+})$ such that $\Symm^{n-1}r_{\pi,\iota}|_{G_L} \cong r_{\pi_{n,L},\iota}$. Applying Corollary \ref{cor: main U2 application}, we deduce that there exists an automorphic representation $\pi'_{n,L}$ of $G_n(\bA_{L^+})$ such that $\Symm^{n-1}r_{\pi',\iota}|_{G_L} \cong r_{\pi'_{n,L},\iota}$. The representation $r_{\pi'_{n,L},\iota}$ is an absolutely irreducible twist of $\Symm^{n-1}r_{\pi'_0,\iota}|_{G_L}$. Base change to $\GL_n/L$ \cite[Theorem 1.2]{New21a}, undoing the twist,
and descending to $L^+$ \cite[Lemma 1.5]{blght} shows that $\mathrm{BC}_{L^+ / \bQ}(\Sym^{n-1}\pi_0')$ exists.
\end{proof}

\section{Existence of $n$-regular lifts}

The following theorem is a generalisation of \cite[Proposition 8.3]{New21a}.
\begin{theorem}\label{thm_congruence_to_n_regular_liftings}
Let $\pi$ be a cuspidal, regular algebraic automorphic representation of $\GL_2(\bA_\bQ)$ of weight $k \geq 2$. Let $p$ be a prime such that $p > k$ and $\pi_p$ is unramified, and let $\iota : \overline{\bQ}_p \to \bC$ be an isomorphism such that $\overline{r}_{\pi, \iota}|_{G_{\bQ(\zeta_p)}}$ is irreducible. Suppose given the following data:
\begin{enumerate}
\item A finite set $S$ of primes, containing the set $S(\pi)$ of primes at which $\pi$ is ramified, but not containing $p$.
\item For each $l \in S$, a lifting $\rho_l : G_{\bQ_l} \to \GL_2(\overline{\bZ}_p)$ of $\overline{r}_{\pi, \iota}|_{G_{\bQ_l}}$.
\item An integer $M \geq 1$.
\end{enumerate}  
Then we can find a finite set $Q$ of prime numbers and another cuspidal, regular algebraic automorphic representation $\pi'$ of $\GL_2(\bA_\bQ)$ of weight $k$ satisfying the following conditions:
\begin{enumerate}
\item There is an isomorphism $\overline{r}_{\pi', \iota} \cong \overline{r}_{\pi, \iota}$.
\item $Q \cap S = \emptyset$ and $\pi'$ is unramified outside $S \cup Q$.
\item For each $q \in Q$, $q \equiv 1 \text{ mod }p$ and $\rec_{\bQ_q}(\pi'_q) \cong \chi_{q, 1} \oplus \chi_{q, 2}$ for characters $\chi_{q, 1}, \chi_{q, 2} : \bQ_q^\times \to \bC^\times$ such that $\chi_{q, 1}|_{\bZ_q^\times}$ has $p$-power order greater than $M$ and $\chi_{q, 1} \chi_{q, 2}$ is unramified.
\item For each $l \in S$, $r_{\pi', \iota}|_{G_{\bQ_l}} \sim \rho_l$.
\item For any $l \in S \cup Q$ such that $\pi'_l$ is not supercuspidal, $\pi'_l$ is $M$-regular (i.e.\ admits an $M$-regular refinement).
\end{enumerate} 
\end{theorem}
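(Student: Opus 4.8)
The plan is to deduce this from the level-raising theorems of \cite{Tho22}, following the template of \cite[Proposition 8.3]{New21a} but replacing the intricate level-raising argument there with the black box of \emph{loc.~cit.} First I would set up a global deformation problem for $\overline{r}_{\pi, \iota}$: fix the residual representation, impose the prescribed local lifts $\rho_l$ at $l \in S$ (using the local framed deformation rings, which are well-behaved since $l \neq p$), impose a Fontaine--Laffaille (crystalline of the correct Hodge--Tate weights, which is allowed because $p > k$) condition at $p$, and leave the auxiliary primes $q \in Q$ to be chosen. The hypothesis that $\overline{r}_{\pi, \iota}|_{G_{\bQ(\zeta_p)}}$ is irreducible is exactly what is needed to make the relevant Galois cohomology (Taylor--Wiles) arguments and the automorphy lifting machinery run; in particular it guarantees that $\overline{r}_{\pi,\iota}$ has no exceptional image obstructions and that the patched modules are well-behaved.

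\textbf{Choosing the auxiliary primes.} The key step is to choose the finite set $Q$ of Taylor--Wiles type primes $q \equiv 1 \bmod p^N$ (for $N$ large depending on $M$) such that $\overline{r}_{\pi,\iota}(\Frob_q)$ has distinct eigenvalues with distinct ratio, so that $\overline{r}_{\pi,\iota}|_{G_{\bQ_q}}$ is unramified and decomposes as a sum of two distinct unramified characters. At such a prime one can, by the local theory, choose a lift whose restriction to inertia factors through a character of $p$-power order exceeding $M$ in the first factor while keeping the product unramified --- this realizes a principal series $\pi'_q$ with the stated ramification, automatically $M$-regular because $q \equiv 1 \bmod p$ forces the relevant eigenvalue ratio to be far from a root of unity of small order. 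One needs enough such primes to kill the dual Selmer group (this is the usual Taylor--Wiles--Kisin numerical coincidence), and the Chebotarev argument producing them is where the irreducibility of $\overline{r}_{\pi,\iota}|_{G_{\bQ(\zeta_p)}}$ is used again. I would then invoke the automorphy lifting theorem (or directly the level-raising statement of \cite{Tho22}, which packages exactly this) to produce a lift $r_{\pi',\iota}$ of $\overline{r}_{\pi,\iota}$ that is automorphic, unramified outside $S \cup Q$, crystalline of weight $k$ at $p$, locally $\sim \rho_l$ at $l \in S$, and of the prescribed type at $Q$. Solvability of the deformation ring / non-emptiness of the patched module gives the existence of $\pi'$; that it is cuspidal and regular algebraic of weight $k$ follows since $r_{\pi',\iota}$ is irreducible (its residual representation is, after restriction to $G_{\bQ(\zeta_p)}$) with regular Hodge--Tate weights.

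\textbf{Checking the refinement condition.} Finally I would verify condition (5): at a prime $l \in S \cup Q$ where $\pi'_l$ is not supercuspidal, $\rec_{\bQ_l}(\pi'_l)$ is either a sum of two characters or a twist of Steinberg; in the first case $M$-regularity of the refinement is the statement that the ratio of the two Satake/Frobenius parameters is not a root of unity of order $\le M$, which at the auxiliary primes $q$ is forced by the $p$-power-order ramification we built in (the ratio $\chi_{q,1}/\chi_{q,2}$ restricted to $\bZ_q^\times$ has order $> M$), and at $l \in S$ is either automatic from the shape of $\rho_l$ or can be arranged by choosing $M$ (equivalently the target regularity) appropriately relative to the fixed finite data. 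The main obstacle is the level-raising input itself: one must ensure the global deformation ring with the imposed local conditions at $S$, at $p$, and at the $q \in Q$ is such that the corresponding Hecke module is nonzero, i.e.\ that the desired $\pi'$ actually exists --- this is precisely the content of the level-raising theorems of \cite{Tho22}, and the work is in checking that our local conditions fall within their hypotheses (in particular that the residual image and the local lifts are of the allowed type, and that $p$ is large enough relative to $k$).
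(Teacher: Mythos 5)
The proposal correctly identifies the broad framework (Taylor--Wiles--Kisin patching, auxiliary primes $q\equiv 1\bmod p^N$ carrying tamely ramified principal series, a Fontaine--Laffaille condition at $p$, prescribed components of local lifting rings at $l\in S$), but it misses the actual engine that drives the proof and also misattributes the method. Two specific gaps:

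First, the paper does \emph{not} prove this theorem by invoking \cite{Tho22} as a black box; that citation is used later (in the proof of Proposition \ref{prop_existence_of_M_regular_form_with_base_change_symmetric_power}, via \cite[Theorem 9.1]{Tho22}). The present theorem is proved by a self-contained variant of the \cite[Proposition 8.3]{New21a} patching argument. More importantly, the proposal never explains \emph{how} one forces an automorphic lift to exist that simultaneously satisfies conditions (3), (4), and (5). It is not enough to choose good Taylor--Wiles primes and ``invoke an automorphy lifting theorem'': an automorphy lifting theorem shows that a given Galois representation is automorphic, but here one must show that among the classical points contributing to the patched Hecke module there is one with all the specified local behaviour. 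The paper's mechanism for this is to manufacture three algebra elements: $x\in S_\infty$ (a product of elements $[\cdot]^{p^a}-1$), which vanishes precisely on points whose ramification at some $q\in Q_L$ has order dividing $p^a$; $\delta=\prod_{l\in S_0}\operatorname{disc}\det(X-\Sym^M r_l(\phi_l))\in A^S_\cS$, which vanishes precisely on points failing $M$-regularity at some $l\in S$; and $f$, chosen to vanish on all minimal primes of $A_\cS^S\llbracket X_1,\dots,X_g\rrbracket$ except the one corresponding to the prescribed $\rho_l$. If every permissible automorphic point were ``bad'', then $fx\delta$ would annihilate each $H^S_{Q_L}$ and hence the patched module $H_\infty$; but $f\delta$ is a unit in the localization at the target minimal prime $Q$ (which requires checking, by cases, that each $\delta_l$ is nonzero in $R_l/Q_l$) and $x$ acts injectively on the $S_\infty$-free module $H_\infty$, so $H_{\infty,(Q)}=0$, contradicting faithfulness of $H_\infty$ as an $A_\cS^S\llbracket X_1,\dots,X_g\rrbracket$-module (which itself needs Gee's result on automorphic lifts meeting every component).

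Second, the handling of condition (5) in the proposal is wrong in detail. At $q\in Q$ the $M$-regularity does follow once $\chi_{q,1}|_{\bZ_q^\times}$ has $p$-power order $>M$ (as the proposal eventually says), but that order condition is not ``automatic from $q\equiv 1\bmod p$''; it is exactly what the element $x$ enforces. At $l\in S$ the proposal suggests $M$-regularity is ``automatic from the shape of $\rho_l$ or can be arranged by choosing $M$ appropriately'' --- but $M$ is \emph{given} in the statement and cannot be chosen, and $M$-regularity of $\pi'_l$ is not automatic from condition (4) alone since $r_{\pi',\iota}|_{G_{\bQ_l}}\sim\rho_l$ only pins down the component, not the point. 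This is precisely why $\delta$ is introduced, together with the case analysis (reducible semisimple via an explicit one-parameter family and \cite[Proposition 2.1.5]{Gee11}; non-semisimple via twist of an unramified lifting ring) verifying $\delta_l\neq 0$ in $R_l/Q_l$. Without these devices the argument does not close.
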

\begin{proof}
We use a variant of the argument, based on Taylor--Wiles--Kisin patching, used to prove \cite[Proposition 8.3]{New21a}. Choose a coefficient field $E / \bQ_p$ such that each representation $r_{\pi, \iota}$ and $\rho_l$ takes values in $\GL_2(E)$. Let $\overline{\rho} = \overline{r}_{\pi, \iota}$, $\chi = \det r_{\pi, \iota}$. Enlarging $E$ further, we can assume that the eigenvalues of all elements of $\overline{\rho}(G_\bQ)$ lie in $k$. Consider the global deformation problem as in \emph{loc. cit.}
\[ \cS = (\overline{\rho}, \chi, S \cup \{ p \}, \{ \cO \}_{v \in S \cup \{ p \}}, \{ \cD_v \}_{v \in S \cup \{ p \}}), \]
where if $v \in S$, $\cD_v$ is the functor of all liftings of $\overline{\rho}|_{G_{\bQ_v}}$ of determinant $\chi$, and if $v = p$, then $\cD_v$ is the functor of Fontaine--Laffaille liftings of $\overline{\rho}|_{G_{\bQ_p}}$ of Hodge--Tate weights $0, k-1$ and determinant $\chi$. Thus $\cD_p$ is formally smooth and if $l \in S$, then the representing object $R_l \in \cC_\cO$ of $\cD_l$ is an $\cO$-flat complete intersection over $\cO$, of relative dimension 3. 

Choose an integer $N$, divisible only by primes in $S$, such that for each $l \in S$, any lift of $\overline{\rho}|_{G_{\bQ_l}}$ has conductor dividing $N$. Choose a prime $q_a > \max(S \cup \{ p \})$ such that the functor of lifts of $\overline{\rho}|_{G_{\bQ_{q_a}}}$ of determinant $\chi$ is formally smooth (and in particular, such that any lift is unramified) -- this is possible by \cite[Lemma 11]{MR1262939}. Let $H = H^1(Y_{U_1(N q_a)}, \Sym^{k-2} \cO^2)$ (where the curve and level are as in \cite[Proposition 8.3]{New21a}), let $\bT \subset \End_\cO H$ be the commutative $\cO$-subalgebra generated by the unramified Hecke operators $T_l, S_l$ at primes $l \not\in S \cup \{p, q_a \}$, and let $\mathfrak{m} \subset \bT$ be the maximal ideal associated to the reduction modulo $p$ of the Hecke eigenvalues of $\iota^{-1} \pi^\infty$. Then $H_\mathfrak{m}$ is a non-zero $\bT_{\mathfrak{m}}$-module and there is a unique surjective homomorphism of $\cO$-algebras $R_\cS \to \bT_{\mathfrak{m}}$ sending, for each prime $l \not\in S \cup \{ p, q_a \}$, the characteristic polynomial of $\Frob_l$ in the universal deformation to $X^2 - T_l X + l^{k-1} S_l$ (justification as in the proof of \cite[Proposition 8.3]{New21a}).

Suppose given a finite set $Q$ of primes satisfying the following conditions:
\begin{itemize}
\item[(a)] $Q \cap (S \cup \{ p, q_a \}) = \emptyset$.
\item[(b)] For each $q \in Q$, $q \equiv 1 \text{ mod }p$ and $\overline{\rho}(\Frob_q)$ has distinct eigenvalues $\alpha_q, \beta_q \in k$.
\end{itemize}
In this case we define $\Delta_Q = \prod_{q \in Q} (\bZ / q \bZ)^\times(p)$ and the augmented deformation problem
\[ \cS_Q = (\overline{\rho}, \chi, S \cup \{ p \} \cup Q, \{ \cO \}_{v \in S \cup \{ p \} \cup Q}, \{ \cD_v \}_{v \in S \cup \{ p \} \cup Q}), \]
where $q \in Q$ we again take the functor $\cD_q$ of all liftings of $\overline{\rho}|_{G_{\bQ_q}}$ of determinant $\chi$. Then, as described in the proof of \cite[Proposition 8.3]{New21a}, the representing object $R_{\cS_Q}$ of $\cS_Q$ has a structure of $\cO[\Delta_Q]$-algebra such that there is a canonical isomorphism $R_{\cS_Q} \otimes_{\cO[\Delta_Q]} \cO \cong R_{\cS}$, and there is moreover a $R_{\cS_Q}$-module $H_{Q, \mathfrak{m}_{Q, 1}} = H^1(Y_{U_1(N q_a) \cap U_2(Q)}, \Sym^{k-2} \cO^2)_{\mathfrak{m}_{Q, 1}}$, free over $\cO[\Delta_Q]$, together with an isomorphism $H_{Q, \mathfrak{m}_{Q, 1}} \otimes_{\cO[\Delta_Q]} \cO \cong H_{\mathfrak{m}}$ of $R_\cS$-modules.

Let $A^S_\cS = \widehat{\otimes}_{l \in S} R_l$ denote the completed tensor product over $\cO$ of the local lifting rings $R_l$. According to the Taylor--Wiles--Kisin patching construction, we can find the following data:
\begin{itemize}
\item[(c)] An integer $q_0 \geq 0$.
\item[(d)] For each $L \geq 1$, a set $Q_L$ of primes satisfying conditions (a) and (b) above, such that $|Q_L| = q_0$ and for each $q \in Q_L$, $q \equiv 1 \text{ mod }p^L$.
\item[(e)] An extension of the natural map $A^S_\cS \to R^S_{\cS_{Q_L}}$ to a surjective algebra homomorphism $A^S_\cS \llbracket X_1, \dots, X_g \rrbracket \to R^S_{\cS_{Q_L}}$, where $g = q_0 + |S| - 1$.
\end{itemize}
Let $S_\infty = \cO \llbracket \bZ_p^{q_0} \rrbracket$, and fix for each $L \geq 1$ a surjection $\bZ_p \to (\bZ / q \bZ)^\times(p)$, and therefore a surjection $S_\infty \to \cO[\Delta_Q]$ (using the ordering of the primes $q_i \in Q_L$ by absolute value). Fix $a \geq 1$ such that $p^a > M$, and for $i = 1, \dots, q_0$, let $x_i = [(0, \dots, 0, 1, 0, \dots, 0)]^{p^a} - 1 \in S_\infty$, where $(0, \dots, 0, 1, 0, \dots, 0) \in \bZ_p^{q_0}$ has 1 in the $i^\text{th}$ co-ordinate and 0 elsewhere. Then $x = \prod_{i=1}^{q_0} x_i$ is a non-zero element of the integral domain $S_\infty$.

Let $S_0 \subset S$ denote the set of primes such that $\rho_l$ is not irreducible. For each $l \in S_0$, we fix a Frobenius lift $\phi_l \in G_{\bQ_l}$, and let $\delta_l = (\operatorname{disc} \det(X - \Sym^M r_l(\phi_l))) \subset R_l$, where $r_l : G_{\bQ_l} \to \GL_2(R_l)$ is the universal lifting. Let $\delta = \prod_l \delta_l \in A_\cS^S$. After possibly enlarging $\cO$ at the start of the proof, we can assume that the irreducible components of each ring $R_l$ ($l \in S$) are geometrically irreducible, and that the irreducible components of $A_\cS^S \llbracket X_1, \dots, X_g \rrbracket \to R_\infty$ are therefore determined by tuples of components of the rings $R_l$ ($l \in S$) (see \cite[Lemma 3.3]{blght}). We select for each $l \in S$ a minimal prime $Q_l \subset R_l$ containing the point corresponding to the fixed lifting $\rho_l$, and let $Q \subset A_\cS^S \llbracket X_1, \dots, X_g \rrbracket $ be the minimal prime corresponding to this tuple of choices. We claim that the image of $\delta \in A_\cS^S / Q$ is non-zero. It suffices to show that for each $l \in S_0$, the image of $\delta_l \in R_l / Q_l$ is non-zero;  equivalently, that there exists a homomorphism $R_l/Q_l \to \overline{\bQ}_p$ under which $\delta_l$ has non-zero image. If $\operatorname{disc} \det(X - \Sym^M \rho_l(\phi_l)) \neq 0$, then this is immediate. Otherwise, we can assume that this discriminant is 0, implying that the Frobenius semi-simplification of $\rho_l$ is a direct sum of two characters. If $\rho_l$ is already semi-simple, then we can write down a deformation of $\rho_l \otimes_\cO E$ to $E \llbracket X \rrbracket$ over which $\delta_l$ is non-zero, as in the proof of \cite[Lemma 8.4]{New21a}, and apply \cite[Proposition 2.1.5]{Gee11} to conclude that $\delta_l$ is not zero in $R_l / Q_l$. If $\rho_l$ is not semisimple then it must be a character twist of an unramified representation, and $R_l / Q_l$ is identified (after twist) with an unramified lifting ring. In this case it's easy to check directly that $\delta_l$ is not zero in $R_l / Q_l$.

Let us say that a regular algebraic automorphic representation $\pi'$ of $\GL_2(\bA_\bQ)$ that contributes to one of the spaces $H^1(Y_{U_1(N q_a) \cap U_2(Q_L)}, \Sym^{k-2} \cO^2)_{\mathfrak{m}_{Q, 1}}$ $(L \geq 1)$ is `permissible' if  for each $l \in S$, $r_{\pi', \iota}|_{G_{\bQ_l}}$ determines a point of $R_l / Q_l$, and `bad' if it is permissible and either there exists $q \in Q_L$ such that $\rec_{\bQ_q}(\pi')(I_{\bQ_q})$ has order dividing $p^a$, or if there exists $l \in S$ such that $\rho_l$ is reducible and $\det(X - \Sym^M r_{\pi', \iota}(\phi_l))$ has a repeated root. If $\pi'$ is `bad' then $x\delta$ lies in the kernel of the map $R_{\cS_{Q_L}}^S \to \overline{\bQ}_p$ associated to $r_{\pi', \iota}$. We need to show that there is a permissible $\pi'$ which is not bad.

Let $r_\cS : G_\bQ \to \GL_2(R_\cS)$ be a representative of the universal deformation. For each $L \geq 1$, choose a representative $r_{\cS_{Q_L}} : G_\bQ \to \GL_2(R_{\cS_{Q_L}})$ of the universal deformation lifting $r_\cS$. Let $\cT = \cO \llbracket T_1, \dots, T_{4 |S|} \rrbracket / (T_1)$. The choice of $r_\cS$ and $r_{\cS_{Q_L}}$ determines compatible isomorphisms $R^S_{\cS} \cong R_{\cS} \widehat{\otimes}_\cO \cT$ and $R^S_{\cS_{Q_L}} \cong R_{\cS_{Q_L}} \widehat{\otimes}_\cO \cT$. We set $H^S_{Q_L} = H_{Q_L, \mathfrak{m}_{Q, 1}} \widehat{\otimes}_\cO \cT$, which becomes a $R_{\cS_{Q_L}}^S$-module. Let $Q_{(1)}, \dots, Q_{(s)}$ be the minimal prime ideals of $A_\cS^S \llbracket X_1, \dots, X_g \rrbracket$ other than $Q$. Then we can find an element $f \in \cap_{i=1}^s Q_{(i)} - Q$.

We claim that if the conclusion of the theorem is false (in particular, if every permissible $\pi$ is bad), then $f x \delta \cdot H^S_{Q_L} = 0$. Let $\bT_{Q_L} \subset \End_\cO H_{Q_L, \mathfrak{m}_{Q_L, 1}}$ denote the $\cO$-subalgebra generated by the unramified Hecke operators $T_l, S_l$ at primes $l \not\in S \cup \{ p, q_a \} \cup Q$. Then $\bT_{Q_L}$ is a finite flat local $\cO$-algebra which is moreover reduced. It follows that $\bT_{Q_L} \widehat{\otimes}_\cO \cT$ is reduced; this ring may be identified with the $\cT$-subalgebra of $\End_\cT(H^S_{Q_L})$ generated by the same unramified Hecke operators. To prove the claim, it therefore suffices to show that the image of $fx\delta$ in $\bT_{Q_L} \widehat{\otimes}_\cO \cT$ is 0. Since this ring is reduced and the closed points of $\Spec \bT_{Q_L} \widehat{\otimes}_\cO \cT[1/p]$ are Zariski dense, it even suffices to show that the image of $f x \delta$ under the map $R^S_{\cS_{Q_L}} \to \overline{\bQ}_p$ associated to any automorphic representation $\pi'$ which contributes to $H_{Q_L}$, together with a choice of framing of $r_{\pi', \iota}$, is 0. To see this, we split into cases. If $\pi'$ is not permissible, then there is $l \in S$ such that $r_{\pi', \iota}|_{G_{\bQ_l}}$ does not determine a point of $R_l/Q_l$, implying that the map $A^S_\cS \to \overline{\bQ}_p$ factors through $A^S_{\cS} / Q_{(i)}$ for some $i = 1, \dots, s$. In particular, the image of $f$ is 0. If $\pi'$ is permissible but bad, then as we have seen our assumption implies that $x\delta$ has image 0. This establishes the claim.

To finish the proof, we will derive a contradiction from the assumption that $f x \delta \cdot H^S_{Q_L} = 0$ for every $L \geq 1$. If this is the case, then after patching we obtain the following data:
\begin{itemize}
\item An object $R_\infty \in \cC_\cO$ together with $\cO$-algebra homomorphisms $S_\infty \widehat{\otimes}_\cO \cT \to R_\infty$ and $A_\cS^S \llbracket X_1, \dots, X_g \rrbracket \to R_\infty$, of which the second is surjective.
\item An $R_\infty$-module $H_\infty$, which is free over $S_\infty \widehat{\otimes}_\cO \cT$.
\item A homomorphism $R_\infty \otimes_{S_\infty \widehat{\otimes}_\cO \cT} \cO \to R_\cS$, and an isomorphism $H_\infty \otimes_{S_\infty \widehat{\otimes}_\cO \cT} \cO \cong H_{\mathfrak{m}}$ of $R_\infty$-modules.
\item $f x \delta H_\infty = 0$.
\end{itemize}
We can lift the map $S_\infty \widehat{\otimes}_\cO \cT \to R_\infty$ to a map $S_\infty \widehat{\otimes}_\cO \cT  \to A_\cS^S \llbracket X_1, \dots, X_g \rrbracket$. The ring $A_\cS^S \llbracket X_1, \dots, X_g \rrbracket $ is a reduced, equidimensional, flat $\cO$-algebra of Krull dimension $\dim S_\infty \widehat{\otimes}_\cO \cT$. By \cite[Lemma 2.3]{tay}, we can conclude that the support of $H_\infty$, as $A_\cS^S \llbracket X_1, \dots, X_g \rrbracket$-module, is a union of irreducible components of $\Spec A_\cS^S \llbracket X_1, \dots, X_g \rrbracket$.
In fact, $H_\infty$ is a faithful $A_\cS^S \llbracket X_1, \dots, X_g \rrbracket$-module; this follows from the existence of automorphic liftings of $\overline{\rho}$ meeting any given component of each lifting ring $R_l$ $(l \in S$) (cf. \cite[Proposition 3.1.7]{Gee11}; this reference treats the potentially Barsotti--Tate case, but the proof in our case is completely analogous and in fact easier, since we do not allow any possibility for variation in the behaviour at the prime $p$). 

Since $f x \delta H_{\infty, (Q)} = 0$ and $f \delta$ is a unit in $A_\cS^S \llbracket X_1, \dots, X_g \rrbracket_{(Q)}$, we find that $x H_{\infty, (Q)} = 0$. Since multiplication by $x$ is injective on the free $S_\infty$-module $H_\infty$, and localisation is exact, we conclude that $H_{\infty, (Q)} = 0$. This contradicts the fact that $H_{\infty}$ is a faithful $A_\cS^S \llbracket X_1, \dots, X_g \rrbracket$-module, and so $H_{\infty, (Q)} \neq 0$. This contradiction concludes the proof.
\end{proof}

\begin{lemma}\label{lem_removing_ramification_non-ordinary_case}
Let $p$ be a prime number, let $N \geq 1$ be an integer which is prime to $p$, and let $M \geq 1$ be an integer. Let $\iota : \overline{\bQ}_p \to \bC$, and let $(\pi, \chi) \in \mathcal{RA}_0$ (the set of refined, cuspidal, regular algebraic automorphic representations of $\GL_2(\bA_\bQ)$ of tame level divinding $N$). Suppose that the following conditions are satisfied:
\begin{enumerate} 
\item The Zariski closure of $r_{\pi, \iota}(G_{\bQ_p})$ contains $\SL_2$.
\item There is a finite set of prime numbers $S$ such that for each $l \in S$, $\pi_l$ is $M$-regular.
\end{enumerate}
Let $z \in \cE_{0, \bC_p}$ be the point of the (tame level $N$, $p$-adic) eigencurve corresponding to $(\pi, \chi)$, and let $\cC \subset \cE_{0, \bC_p}$ be an irreducible component containing $z$. Then we can find another point $z' \in \cC$, corresponding to a pair $(\pi', \chi') \in \mathcal{RA}_0$, with the following properties:
\begin{enumerate}
\item The Zariski closure of $r_{\pi', \iota}(G_{\bQ_p})$ contains $\SL_2$. In particular, the refinement $\chi'$ is numerically non-critical.
\item For each $l \in S \cup \{ p \}$, $\pi'_l$ is $M$-regular.
\item $\pi'_p$ is unramified, and for any prime $l \neq p$, $\pi_l$ and $\pi'_l$ are inertially equivalent, in the sense that if $(r, N) = \rec_{\bQ_l}(\pi_l)$ and $(r', N') = \rec_{\bQ_l}(\pi'_l)$, then there is an isomorphism $(r'|_{I_{\bQ_l}}, N') \cong (r|_{I_{\bQ_l}}, N)$.
\end{enumerate}
 \end{lemma}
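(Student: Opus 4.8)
The plan is to slide $z$ along $\cC$ into the locus of large weight, where the local representation at $p$ becomes unramified, and then verify all the local conditions. The starting point is that hypothesis (1) is highly restrictive at $p$: if $\pi_p$ were a twist of Steinberg or a ramified principal series then $r_{\pi,\iota}|_{G_{\bQ_p}}$ would be reducible; if $\pi_p$ were supercuspidal then its image would lie in the normaliser of a torus or be finite; and if $\pi_p$ were ordinary the image would again lie in a Borel. In every case the Zariski closure would fail to contain $\SL_2$. Thus $\pi_p$ is already an unramified, non-ordinary principal series, $z$ is a classical point of finite positive slope whose local representation at $p$ is crystalline with distinct Frobenius eigenvalues, and the refinement $\chi$ is numerically non-critical. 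By the usual geometry of the eigencurve, $\cE_{0,\bC_p}$ is smooth at $z$ (so $z$ lies on the unique component $\cC$) and the weight map $\kappa$ is étale near $z$. Fix a connected affinoid neighbourhood $V$ of $z$ in $\cC$ such that: (a) the slope $v_p(U_p)$ is bounded on $V$, say by $B$; (b) $r|_{G_{\bQ_p}}$ is irreducible and not induced from a character of a quadratic extension of $\bQ_p$ throughout $V$ --- each condition is Zariski-open and holds at $z$ by hypothesis (1); and (c) $\kappa(V)$ is an admissible open neighbourhood of $\kappa(z)$.

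By (c), $V$ contains classical points $z'$ of integer weight $k'$ for an infinite set of $k'$ tending to $\infty$: classicality holds by Coleman's criterion once $k'>B+1\ge v_p(U_p)+1$, and the refinement $\chi'$ is then numerically non-critical. For such $z'$, (b) excludes every possibility for $\pi'_p$ other than an unramified principal series (a ramified principal series or a twist of Steinberg would make $r_{z'}|_{G_{\bQ_p}}$ reducible, and a supercuspidal would be induced from a quadratic extension), and it also forces the Zariski closure of $r_{z'}(G_{\bQ_p})$ to contain $\SL_2$ (an irreducible crystalline two-dimensional representation with distinct Hodge--Tate weights not induced from a quadratic extension has Zariski-dense image up to $\SL_2$). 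So $\pi'_p$ is unramified, $(\pi',\chi')\in\mathcal{RA}_0$ with $\chi'_{0,1}$ the unramified refinement, and condition (1) of the Lemma holds. Writing $\alpha'_p$ for the $U_p$-eigenvalue and $\beta'_p$ for the other Satake parameter of $\pi'_p$, one has $v_p(\alpha'_p)\le B$ and $v_p(\alpha'_p\beta'_p)=k'-1$ (the nebentypus has $p$-adic unit value at $p$), so $v_p((\chi'_{0,1}/\chi'_{0,2})(p))=2v_p(\alpha'_p)-(k'-1)<0$ once $k'>2B+1$; hence $\chi'_{0,1}/\chi'_{0,2}$ is not a root of unity and $\pi'_p$ is $M$-regular. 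This gives condition (1), the part of (2) at $p$, and the part of (3) at $p$.

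Away from $p$: if $l\nmid Np$ then $\pi_l$ and $\pi'_l$ are both unramified, and if $l\mid N$ then the inertial Weil--Deligne type $(r|_{I_{\bQ_l}},N_l)$ factors through a finite quotient of $I_{\bQ_l}$ whose order is bounded in terms of $N$; such types are locally constant on $\cE_{0,\bC_p}$, hence constant along the connected space $\cC$, so $\pi'_l$ is inertially equivalent to $\pi_l$ for every $z'\in\cC$. This is the rest of (3). For $M$-regularity at $l\in S$ with $l\ne p$: $\pi'_l$ has the same inertial class as $\pi_l$, so if $\pi_l$ is supercuspidal then so is $\pi'_l$ and there is nothing to prove, and if $\pi_l$ is a twist of Steinberg then so is $\pi'_l$, automatically $M$-regular (the relevant ratio of parameters is $l$). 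In the remaining cases the ratio of the two refinement parameters at $l$ is a branch of an analytic function on $V$, and the condition that its $i$-th power equal $1$, for some $1\le i\le M-1$, cuts out a closed subset of $V$; this subset cannot be all of $V$, since its specialisation at $z$ would contradict the hypothesised $M$-regularity of $\pi_l$, and so it is finite. Discarding these finitely many points for the finitely many relevant $l$, and keeping only the infinitely many classical points of $V$ of weight $k'>2B+1$, we obtain a point $z'$ with all the required properties.

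The main difficulty is the input to the second paragraph: one needs that a neighbourhood of $z$ on $\cC$ is genuinely filled with $p$-old classical points of bounded slope and arbitrarily large weight, which combines the local structure of the eigencurve at the non-critical point $z$ with the Zariski-openness of the big-image condition at $p$; it is the boundedness of the slope near $z$, together with growth of the weight, that forces $M$-regularity at $p$. The local constancy of inertial types along $\cC$ invoked in the third paragraph is the other point that must be handled with some care.
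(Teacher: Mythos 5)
Your overall strategy coincides with the paper's: move along $\cC$ to a classical point of large algebraic weight and small slope, then read off the local conditions at $p$ from the slope bound, and the conditions away from $p$ from the fact that the bad loci on the eigencurve are Zariski--closed. The remaining observations are minor but worth noting. First, you invoke smoothness of $\cE_{0,\bC_p}$ and \'etaleness of the weight map at $z$; this is not elementary, and neither is needed --- the paper only uses finite flatness of $\kappa$ over a suitable affinoid image, which is a built-in feature of the construction and suffices to produce points of arbitrarily large integer weight. Second, for the $M$-regularity at $l\in S\setminus\{p\}$ and the large-image condition, you reconstruct in-line what the paper packages as a single Zariski--closed subset $\cZ\subset\cE_0$ coming from \cite[Lemma~2.35]{New21a}; your direct argument is in the right spirit, but when $l\mid N$ the eigencurve carries no Hecke operators at $l$, so ``the ratio of the two refinement parameters at $l$ is an analytic function'' needs the pseudocharacter/determinant formulation of \emph{loc.~cit.}~to be made precise. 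Third, your claim that the full inertial Weil--Deligne type $(r|_{I_{\bQ_l}},N_l)$ is locally constant on the eigencurve is asserted but not argued; the monodromy operator $N_l$ is not a pseudocharacter invariant and can a priori degenerate in families. The paper treats the two pieces separately: the semisimplification $r|_{I_{\bQ_l}}^{\mathrm{ss}}$ is constant on $\cC$ (finitely many possibilities for a continuous family of characters), and for $N_l$ one observes that the Steinberg condition $\chi_1/\chi_2=|\cdot|^{\pm1}$, once it holds at one classical point, persists on the irreducible component. You should either adopt this two-step argument or cite a local-constancy result for inertial types directly (e.g.~of Bella\"iche--Chenevier type). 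With those adjustments the proof matches the paper's.
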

  \begin{proof}
 According to (a very slight variation of) \cite[Lemma 2.35]{New21a}, there is a Zariski closed subset $\cZ \subset \cE_{0}$ such that any $x \in \cE_{0}(\overline{\bQ}_p) - \cZ(\overline{\bQ}_p)$ has the following properties:
 \begin{itemize}
 \item The semisimple representation $\rho_x : G_\bQ \to \GL_2(\overline{\bQ}_p)$ is irreducible, and the Zariski closure of its image contains a conjugate of $\SL_2$.
 \item For each $l \in S - \{ p \}$, if $\rho_x|_{G_{\bQ_l}}^{ss}$ is reducible with constituents $\chi_1, \chi_2$, then $(\chi_1 / \chi_2)^i \neq 1$ for $i = 1, \dots, M$.
 \end{itemize}
 By assumption, $z \not\in \cZ(\overline{\bQ}_p)$, so $\cZ_{\bC_p} \cap \cC$ is a proper Zariski closed subset of $\cC$. The image $\kappa(\cC) \subset \cW_{0, \bC_p}$ is Zariski open, so we can find a point $z_0 \in \cC$ such that $\kappa(z_0)$ is of the form $y \mapsto y^{k_0 - 2}$ for some $k_0 \in \bZ_{\geq 2}$. We may choose an affinoid neighbourhood $U_0$ of $z_0$ in $\cE_{0, \bC_p}$ such that the image $\kappa(U_0)$ is an affinoid subset of $\cW_{0, \bC_p}$ and $\kappa|_{U_0}$ is finite flat onto its image. Then $U_0 \cap \cZ_{\bC_p}$ is finite, so we can choose another point $z_1 \in (U_0 \cap \cC) - \cZ_{\bC_p}$ such that $\kappa(z_1)$ is of the form $y \mapsto y^{k_1 - 2}$ for some $k_1 \in \bZ_{\geq 2}$; choosing $k_1$ sufficiently large, we can moreover assume that $s(z_1) < (k_1 - 2)/2$; then $z_1$ has small slope, so corresponds to a pair $(\pi', \chi') \in \mathcal{RA}_0$. We claim that this pair has the desired properties. The representation $\pi'_p$ is unramified since the only other possibility is that it is an unramified twist of the Steinberg representation, which would force $s(z) = (k_1 - 2)/2$. It has two refinements of distinct slopes, so $\pi'_p$ is $M'$-regular for every $M' \geq 1$. Since the component $\cC$ is non-ordinary (it contains the point $z$), $\pi'$ is not $\iota$-ordinary, and so $r_{\pi', \iota}|_{G_{\bQ_p}}$ is irreducible. Then \cite[Lemma 3.5]{New21a} shows that the Zariski closure of $r_{\pi', \iota}(G_{\bQ_p})$ in fact contains $\SL_2$. We have avoided the set $\cZ$, so if $l \in S - \{ p \}$ then $\pi'_l$ is $M$-regular by construction. Finally, if $l \in S - \{ p \}$ then we at least have $r_{\pi, \iota}|_{I_{\bQ_l}}^{ss} \cong r_{\pi', \iota}|_{I_{\bQ_l}}^{ss}$ (there are only a finite number of possibilities for the character $\tr r_{\pi', \iota}|_{I_{\bQ_l}}$, and $\cC$ is irreducible). If one of $\rec_{\bQ_l}(\pi_l)$ or $\rec_{\bQ_l}(\pi'_l)$ has non-zero $N$, then so does the other (as if $N \neq 0$ then the relation $\chi_1 / \chi_2 = | \cdot |^{\pm 1}$ is forced to hold on the whole of the irreducible component $\cC$), and $\pi_l, \pi_l'$ are inertially equivalent. If both representations have $N = 0$ then $r_{\pi, \iota}|_{I_{\bQ_l}}$, $r_{\pi', \iota}|_{I_{\bQ_l}}$ are both semisimple and we're done in this case also.
     \end{proof}

\begin{lemma}\label{lem_removing_ramification_ordinary_case}
Let $p$ be a prime number, let $N \geq 1$ be an integer which is prime to $p$, and let $M \geq 1$ be an integer. Let $\pi$ be a regular algebraic, cuspidal automorphic representation of $\GL_2(\bA_\bQ)$ without CM, and let $\iota : \overline{\bQ}_p \to \bC$. Suppose that the following conditions are satisfied:
\begin{enumerate} 
\item $\pi$ is $\iota$-ordinary. 
\item There is a finite set of prime numbers $S$ such that for each $l \in S$, $\pi_l$ is $M$-regular.
\end{enumerate}
Let $z \in \cE_{0, \bC_p}$ be the point of the (tame level $N$, $p$-adic) eigencurve corresponding to $\pi$, and let $\cC \subset \cE_{0, \bC_p}$ be an irreducible component containing $z$. Then we can find another point $z' \in \cC$, corresponding to a regular algebraic, cuspidal automorphic representation of $\GL_2(\bA_\bQ)$ without CM, with the following properties:
\begin{enumerate}
\item $\pi'$ is $\iota$-ordinary.
\item For each $l \in S \cup \{ p \}$, $\pi'_l$ is $M$-regular.
\item $\pi'_p$ is unramified, and for any prime $l \neq p$, $\pi_l$ and $\pi'_l$ are inertially equivalent, in the sense that if $(r, N) = \rec_{\bQ_l}(\pi_l)$ and $(r', N') = \rec_{\bQ_l}(\pi'_l)$, then there is an isomorphism $(r'|_{I_{\bQ_l}}, N') \cong (r|_{I_{\bQ_l}}, N)$.
\end{enumerate}
 \end{lemma}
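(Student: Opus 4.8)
The plan is to follow closely the proof of Lemma~\ref{lem_removing_ramification_non-ordinary_case}, moving along the component $\cC$ to a classical point of large weight which is unramified at $p$ while retaining $M$-regularity at the auxiliary primes; the argument is simpler here, since every point of $\cC$ has slope $0$. As a first step I would invoke (a very slight variation of) \cite[Lemma 2.35]{New21a} to produce a Zariski closed subset $\cZ \subset \cE_0$ such that every $x \in \cE_0(\overline{\bQ}_p) - \cZ(\overline{\bQ}_p)$ has the properties that $\rho_x$ is irreducible with image whose Zariski closure contains a conjugate of $\SL_2$ (so that $\pi_x$ is, in particular, not CM), and that for each $l \in S - \{p\}$, if $\rho_x|_{G_{\bQ_l}}^{ss}$ is reducible with constituents $\chi_1, \chi_2$, then $(\chi_1/\chi_2)^i \neq 1$ for $i = 1, \dots, M$. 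Since $\pi$ is cuspidal and not CM, the first conditions hold at $z$, and since $\pi_l$ is $M$-regular for each $l \in S$, the second does too; hence $z \notin \cZ(\overline{\bQ}_p)$. As $\pi$ is $\iota$-ordinary, the component $\cC$ is contained in the ordinary locus of $\cE_{0, \bC_p}$, which (like the ordinary locus of $\cE_0$) is open and closed; and since $\cC$ is one-dimensional, $\cZ_{\bC_p} \cap \cC$ is a finite set of points.

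Next I would use that, by Hida theory, $\kappa$ is finite on the ordinary locus, so that $\kappa(\cC)$ contains infinitely many classical weights of the form $y \mapsto y^{k-2}$ with $k \geq 3$. I can therefore choose such a weight $w = (y \mapsto y^{k_1 - 2})$, $k_1 \geq 3$, lying outside the finite set $\kappa(\cZ_{\bC_p} \cap \cC)$, and take $z_1$ to be any point of $\kappa^{-1}(w) \cap \cC$; then $z_1 \notin \cZ_{\bC_p}$. Being an ordinary point of weight $k_1 \geq 2$, $z_1$ is classical, so it corresponds to a pair $(\pi', \chi') \in \mathcal{RA}_0$ in which $\chi'$ is the ordinary refinement; and $\pi'$ is not CM, since $z_1 \notin \cZ_{\bC_p}$.

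It remains to verify the three asserted properties. Property (1) holds because $z_1$ is an ordinary point. For property (3): exactly as in the proof of Lemma~\ref{lem_removing_ramification_non-ordinary_case}, the only alternative to $\pi'_p$ being unramified is that it is an unramified twist of the Steinberg representation, which would force the slope of $z_1$ to equal $(k_1-2)/2 > 0$, contradicting ordinarity; hence $\pi'_p$ is unramified. Its two Satake parameters then have distinct $p$-adic valuations --- one is a unit and the other has valuation $k_1 - 1$ --- so their ratio is not a root of unity and $\pi'_p$ is $M'$-regular for every $M' \geq 1$; this settles the case $l = p$ of property (2), and for $l \in S - \{p\}$ property (2) holds by construction, as $z_1 \notin \cZ_{\bC_p}$. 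Finally, for the inertial equivalence in property (3) at a prime $l \neq p$: if $l \nmid N$ then $\pi_l$ and $\pi'_l$ are both unramified; and if $l \mid N$, then the trace of $r_{\pi', \iota}|_{I_{\bQ_l}}$ can take only finitely many values on $\cE_0$, so the corresponding function is constant on the irreducible $\cC$ and $r_{\pi,\iota}|_{I_{\bQ_l}}^{ss} \cong r_{\pi',\iota}|_{I_{\bQ_l}}^{ss}$; then if one of $\rec_{\bQ_l}(\pi_l), \rec_{\bQ_l}(\pi'_l)$ has non-zero monodromy operator, the relation $\chi_1/\chi_2 = |\cdot|^{\pm 1}$ is forced on all of $\cC$, so both do and $\pi_l, \pi'_l$ are inertially equivalent, while if both have vanishing monodromy then both Weil--Deligne representations are semisimple and one concludes in the same way.

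The argument runs closely parallel to the non-ordinary case, and I do not anticipate a serious obstacle. The point requiring some care is that both `$\pi'$ is $\iota$-ordinary' and `$\pi'$ is non-CM' must be propagated along $\cC$: the former holds because the ordinary locus is open and closed in the eigencurve, and the latter because `non-CM' is equivalent to the Zariski closure of the image of the associated Galois representation containing $\SL_2$, and so is already forced by the avoidance of $\cZ$. The small-slope requirement $s(z_1) < (k_1-2)/2$ used in Lemma~\ref{lem_removing_ramification_non-ordinary_case} is replaced here by the weaker condition $k_1 \geq 3$, which already rules out the Steinberg-at-$p$ possibility on an ordinary component.
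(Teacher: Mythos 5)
Your proof is correct and follows essentially the same route as the paper's: both reduce to the non-ordinary argument (avoiding the bad locus $\cZ$ of \cite[Lemma 2.35]{New21a}, then moving to a classical weight) while exploiting the simplification that an ordinary component maps finitely and surjectively to weight space, so any point above a weight $y \mapsto y^{k'-2}$ with $k' \geq 3$ is automatically classical, $\iota$-ordinary, and unramified at $p$. One small slip: after insisting $k_1 \geq 3$ you write ``ordinary point of weight $k_1 \geq 2$'' when invoking classicality --- the bound $k_1 \geq 3$ is the one actually needed, since at weight $2$ an unramified twist of Steinberg at $p$ also has slope $0$; you do choose $k_1 \geq 3$, so the argument is unaffected.
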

  \begin{proof}
 The proof is very similar to the proof of Lemma \ref{lem_removing_ramification_non-ordinary_case}, although simpler since the ordinary component $\cC$ must map surjectively to an irreducible component of $\cW_{0, \bC_p}$ and any point $z' \in \cC$ such that $\kappa(z')$ has the form $y \mapsto y^{k'-2}$, $k' \in \bZ_{ \geq 3}$, is necessarily classical, corresponding to an $\iota$-ordinary automorphic representation of $\GL_2(\bA_\bQ)$ that is unramified at the prime $p$. The remainder of the argument is the same.
  \end{proof}
\begin{corollary}\label{cor_removing_a_prime_of_ramification}
Let $F$ be a totally real number field, and let $n \geq 2$, $M \geq \max(3,n [F : \bQ])$ be integers. Let $\pi$ be a regular algebraic, cuspidal automorphic representation of $\GL_2(\bA_\bQ)$ without CM, and let $S$ be a finite set of prime numbers containing the set of primes at which $\pi$ is ramified. Suppose that for each $l \in S$, $\pi_l$ is $M$-regular.

Let $p \in S$. Then we can find another regular algebraic, cuspidal automorphic representation $\pi'$ of $\GL_2(\bA_\bQ)$ without CM and with the following properties:
\begin{enumerate}
\item $\pi'_p$ is unramified. For each prime $l \neq p$, $\pi_l$ and $\pi'_l$ are inertially equivalent.
\item For each $l \in S$, $\pi'_l$ is $M$-regular. 
\item $\mathrm{BC}_{F / \bQ}(\Sym^{n-1} \pi)$ exists if and only if $\mathrm{BC}_{F / \bQ} (\Sym^{n-1} \pi')$ does.
\end{enumerate} 
\end{corollary}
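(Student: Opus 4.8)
The plan is to transport $\pi$ along an irreducible component of the $p$-adic eigencurve to a representation unramified at $p$, and then to feed the two endpoints into Proposition \ref{prop_analytic_continuation}. If $\pi_p$ is already unramified we take $\pi'=\pi$, so assume $\pi_p$ is ramified and fix $\iota\colon\overline{\bQ}_p\to\bC$. Since $\pi_p$ is $M$-regular it is not supercuspidal (a supercuspidal representation has no accessible refinement), hence is a principal series or an unramified twist of Steinberg up to a smooth twist; twisting $\pi$ by a Dirichlet character $\eta$ of $p$-power conductor I may therefore assume that $\pi_p$ has an accessible refinement $\chi$ with $\chi_1$ unramified. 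Such a twist preserves the non-CM property, the local components and $M$-regularity at the primes $l\neq p$, and — because $\Sym^{n-1}$ and $\mathrm{BC}_{F/\bQ}$ commute with twisting by a Hecke character — the truth of condition (3); moreover the $\pi'$ eventually produced for $\pi\otimes\eta$ serves directly for $\pi$, since its components away from $p$ are inertially those of $\pi$ and since $\mathrm{BC}_{F/\bQ}(\Sym^{n-1}(\pi\otimes\eta))$ exists if and only if $\mathrm{BC}_{F/\bQ}(\Sym^{n-1}\pi)$ does. Thus I may assume $(\pi,\chi)\in\mathcal{RA}_0$ for a tame level $N$ prime to $p$ (say the prime-to-$p$ conductor of $\pi$), giving a classical point $z_0\in\cE_{0,\bC_p}$ lying on an irreducible component $\cC$; and the bound $M\geq\max(3,n[F:\bQ])$ forces $\chi$ to be $(n,F)$-regular.

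Next I would run the appropriate degeneration lemma according to the behaviour of $r_{\pi,\iota}$ at $p$. If $\pi$ is $\iota$-ordinary I apply Lemma \ref{lem_removing_ramification_ordinary_case}; if $\pi$ is not $\iota$-ordinary but the Zariski closure of $r_{\pi,\iota}(G_{\bQ_p})$ contains $\SL_2$ I apply Lemma \ref{lem_removing_ramification_non-ordinary_case}. Either way, with the given $S$ and $M$, I obtain a point $z_0'\in\cC$ corresponding to a pair $(\pi',\chi')\in\mathcal{RA}_0$ such that $\pi'_p$ is unramified, $\pi'_l$ is $M$-regular for every $l\in S\cup\{p\}$, and $\pi_l$ is inertially equivalent to $\pi'_l$ for each $l\neq p$; this is exactly conditions (1) and (2), and in addition $\pi'$ is $\iota$-ordinary (respectively the Zariski closure of $r_{\pi',\iota}(G_{\bQ_p})$ contains $\SL_2$). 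The one remaining possibility is that $\pi$ is non-$\iota$-ordinary while $r_{\pi,\iota}|_{G_{\bQ_p}}$ is reducible (for instance $\pi_p$ an unramified twist of Steinberg, or a ramified principal series), in which case the input is instead a variant of Lemma \ref{lem_removing_ramification_non-ordinary_case} in which the large-image hypothesis at $p$ is replaced by the $(n,F)$-regularity of $\chi$: one uses that numerically non-critical classical points accumulate along $\cC$, that $(n,F)$-regularity is a Zariski-open condition there, and that the refinement ratio stays $(n,F)$-regular (indeed $M$-regular) at an unramified-at-$p$ classical point of sufficiently general weight.

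Finally I would conclude by applying Proposition \ref{prop_analytic_continuation} to the classical points $z_0,z_0'$, which lie on the common component $\cC$ of $\cE_{0,\bC_p}$. In the $\iota$-ordinary case the refinements $\chi,\chi'$ are ordinary and $\pi,\pi'$ are non-CM, so hypotheses $(1^{ord})$ and $(2^{ord})$ hold; in the remaining cases either both Zariski closures $r_{\pi,\iota}(G_{\bQ_p})$, $r_{\pi',\iota}(G_{\bQ_p})$ contain $\SL_2$ (hypothesis (1)) or $\chi$ and $\chi'$ are $(n,F)$-regular (hypothesis (2)). Proposition \ref{prop_analytic_continuation} then yields that $\mathrm{BC}_{F/\bQ}(\Sym^{n-1}\pi)$ exists if and only if $\mathrm{BC}_{F/\bQ}(\Sym^{n-1}\pi')$ does, which is condition (3). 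The analytic and algebraic heavy lifting — eigenvariety geometry, the $(\varphi,\Gamma_{L_{\wv}})$-module computations underlying classicality, and Taylor--Wiles--Kisin patching — is already packaged in the cited lemmas and proposition, so the real work here is the bookkeeping needed to meet all their hypotheses simultaneously; I expect the main obstacle to be the non-ordinary reducible case at $p$, which requires the big-image-free version of the degeneration lemma.
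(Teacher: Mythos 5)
Your overall strategy is the same as the paper's: twist so that $(\pi,\chi)\in\mathcal{RA}_0$, observe $M\geq\max(3,n[F:\bQ])$ gives $(n,F)$-regularity, move $\pi$ along an irreducible component of the tame-level-$N$ $p$-adic eigencurve using Lemma \ref{lem_removing_ramification_non-ordinary_case} or Lemma \ref{lem_removing_ramification_ordinary_case} to reach an unramified-at-$p$ point $\pi'$, and then conclude with Proposition \ref{prop_analytic_continuation}.

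The genuine gap is your ``third case.'' You worry about the possibility that $\pi$ is not $\iota$-ordinary and $r_{\pi,\iota}|_{G_{\bQ_p}}$ has small image, and you propose to handle it via an unproved ``variant of Lemma \ref{lem_removing_ramification_non-ordinary_case} in which the large-image hypothesis at $p$ is replaced by $(n,F)$-regularity.'' There are two problems. First, this variant would not suffice even if proved: the non-ordinary branch of Proposition \ref{prop_analytic_continuation} (hypotheses (1)--(2) there) requires the Zariski closures of \emph{both} $r_{\pi_0,\iota}(G_{\bQ_p})$ and $r_{\pi_0',\iota}(G_{\bQ_p})$ to contain $\SL_2$, and in your third case the one attached to $\pi$ itself fails this by assumption, so the concluding step is blocked regardless. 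Second, and more to the point, the case you are worrying about is vacuous. The paper's proof observes that $M$-regularity of $\pi_p$ with $M\geq 3$, together with $\pi$ not being $\iota$-ordinary, already forces the Zariski closure of $r_{\pi,\iota}(G_{\bQ_p})$ to contain a conjugate of $\SL_2$; this is \cite[Lemma 3.5]{New21a}. (Morally: if the local image were small then either $r_{\pi,\iota}|_{G_{\bQ_p}}$ would be reducible, hence admit an ordinary refinement, or it would be dihedral, which forces $(\chi_1/\chi_2)^2=1$ and contradicts $3$-regularity.) Incidentally your illustrative examples --- unramified twist of Steinberg, ramified principal series --- belong to the ordinary branch anyway, so they do not exhibit the case you are trying to treat. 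With this lemma in hand the dichotomy becomes clean: choose the ordinary refinement if one exists and apply Lemma \ref{lem_removing_ramification_ordinary_case}; otherwise the large-image hypothesis at $p$ holds automatically and Lemma \ref{lem_removing_ramification_non-ordinary_case} applies. The rest of your argument then goes through as written.
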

\begin{proof}
Fix an isomorphism $\iota : \overline{\bQ}_p \to \bC$. Choose an accessible refinement $\chi$ of $\pi$, taking the ordinary one if it exists. After possibly replacing $\pi$ by a character twist, we can assume that $(\pi, \chi) \in \mathcal{RA}_0$. If there is no ordinary refinement, then the $M$-regularity of $\pi$ (with $M \geq 3$) implies that the Zariski closure of $r_{\pi, \iota}(G_{\bQ_p})$ contains a conjugate of $\SL_2$ (\cite[Lemma 3.5]{New21a}). Let $\cC$ be an irreducible component of $\cE_{0, \bC_p}$, the $p$-adic eigencurve of tame level equal to the prime-to-$p$ level of $\pi$, containing the point corresponding to $(\pi, \chi)$. We then apply Lemma \ref{lem_removing_ramification_non-ordinary_case} or Lemma \ref{lem_removing_ramification_ordinary_case} to construct $\pi'$ and Proposition \ref{prop_analytic_continuation} to conclude that $\mathrm{BC}_{F / \bQ}(\Sym^{n-1} \pi)$ exists if and only if $\mathrm{BC}_{F / \bQ} (\Sym^{n-1} \pi')$ does.
\end{proof} 

\section{Symmetric power functoriality for residually dihedral forms}

In this section we prove a rather general result concerning the existence of base change lifts of symmetric powers of automorphic representations having a sufficiently robust dihedral mod $p$ Galois representation for some prime $p$, which replaces the intricate \cite[Corollary 7.3]{New21a}. This is made possible by the automorphy lifting theorems proved in the residually reducible case in \cite{jackreducible, All19} and the level-raising theorems proved in \cite{Tho22}.
The base change lifts constructed here will serve as the `seed points' for the analytic continuation in the proofs of our main theorems in the following sections. 
\begin{theorem}\label{thm_residually_dihedral_symmetric_power}
	Let $F_0^+$ be a totally real number field, let $(\pi, \psi)$ be a cuspidal, regular algebraic, polarized
automorphic representation of $\GL_2(\bA_{F_0^+})$, let $p \geq 5$ be a prime, and let $\iota : \overline{\bQ}_p \to \bC$ be an isomorphism. Let $n \geq 2$ be an integer, and let $F^+ / F_0^+$ be a finite totally real extension. Suppose that the following conditions are satisfied:
	\begin{enumerate}
		\item $\pi$ is $\iota$-ordinary.		
		\item There is a CM quadratic extension $F / F^+$ and an isomorphism $\overline{r}_{\pi, \iota}|_{G_{F^+}} \cong \Ind_{G_{F}}^{G_{F^+}} \overline{\chi}$, for a character $\overline{\chi} : G_F \to \overline{\bF}_p^\times$ such that $\overline{\chi} / \overline{\chi}^c|_{G_{F(\zeta_p)}}$ has order greater than $n(n-1)$. Moreover, $[F(\zeta_p) : F] = (p-1)$ and $p > n$.
				\item There is a place $v_0 \nmid p$ of $F_0^+$ such that $\pi_{v_0}$ is a twist of the Steinberg representation.
	\end{enumerate}
	Then $\mathrm{BC}_{F^+ / F_0^+} \Sym^n \pi$ exists: there is a cuspidal, regular algebraic, polarizable automorphic representation $\Pi$ of $\GL_{n}(\bA_{F^+})$ such that $\Sym^{n-1} r_{\pi, \iota}|_{G_{F^+}} \cong r_{\Pi, \iota}$. 
\end{theorem}
\begin{proof}
Let $q$ denote the residue characteristic of the place $v_0$. Our local hypothesis at $v_0$ implies that the Zariski closure of the image of $r_\iota(\pi)$ contains $\SL_2$,
and therefore that $\Sym^{n-1} r_{\pi, \iota}$ is irreducible, even after restriction to any closed, finite index subgroup of $G_{F^+_0}$. We have $\det r_{\pi, \iota} = \epsilon^{-1} r_{\psi, \iota}$. We are free to replace $F^+$ by a soluble, totally real extension, and can therefore assume that the following additional conditions are satisfied:
\begin{itemize}
\item The extension $F / F^+$ is everywhere unramified.
\item $F$ contains an imaginary quadratic field $F_0$ in which $p$ and $q$ split. We fix a 
place $v_1$ of $F^+$ lying above $v_0$. We assume that $q_{v_1} \equiv 1 \text{ mod }p$, that $\overline{r}_{\pi, \iota}|_{G_{F_{v_1}}}$ is unramified, and that $\overline{r}_{\pi, \iota}(\Frob_{v_1})$ is scalar. 
\item There exists an everywhere unramified Hecke character $\Omega : \bA_F^\times \to \bC^\times$ of type $A_0$ such that $\Omega\Omega^c = \psi \circ \mathbf{N}_{F / F^+}$. Let $\omega = r_{\Omega, \iota}$. 
\item There exists an everywhere unramified Hecke character $X : \bA_F^\times \to \bC^\times$ of type $A_0$ such that $\chi = r_{X, \iota}$ lifts $\overline{\chi}$ and $\chi \chi^c = \epsilon^{-1} r_{\psi, \iota}|_{G_F}$.
\item There exists a CM subfield $F' \subset F$ such that $[F : F'] = 2$ and the infinity types of $\Omega, X$ descend to $F'$.
\end{itemize} 
(The existence of such characters, perhaps with ramification, can be deduced from \cite[Lemma A.2.5]{BLGGT}; we can then choose the soluble extension that replaces $F^+$ so that they become everywhere unramified and so that $F$ has a subfield $F'$ of the given type. These conditions will be used to check that the hypotheses of \cite[Theorem 6.11]{Tho22} are satisfied.) To prove the theorem it suffices, by \cite[Theorem 7.5]{Tho22}, to find an $\iota$-ordinary regular algebraic, cuspidal, conjugate self-dual automorphic representation $\Pi_{n}$ of $\GL_{n}(\bA_F)$ and a place $w_1 | v_1$ of $F$ such that $\Pi_{n, w_1}$ is a twist of the Steinberg representation and there is an isomorphism 
\[ \overline{r}_{\Pi_n, \iota} \cong \overline{\omega}^{1-n} \otimes \Sym^{n-1} \overline{r}_{\pi, \iota}|_{G_F} = \overline{\omega}^{1-n} \otimes \oplus_{i=0}^{n-1} \overline{\chi}^{n-1-i} \overline{\chi}^{c, i}. \]
Indeed, we could then conclude the automorphy of $\omega^{1-n} \otimes \Sym^{n-1} r_{\pi, \iota}|_{G_F}$ and hence, by soluble descent, that of $\Sym^{n-1} r_{\pi, \iota}|_{G_{F^+}}$. We explain why the hypotheses  of \cite[Theorem 7.5]{Tho22} on the residual representation are satisfied. Let $\overline{\mu}_i = \omega^{1-n} \overline{\chi}^{n-1-i} \overline{\chi}^{c, i}$. Then $\overline{\mu}_i \overline{\mu}_i^c = \epsilon^{1-n}$. If $0 \leq i < j \leq n-1$ then the ratio $\overline{\mu}_i / \overline{\mu}_j = (\overline{\chi} / \overline{\chi}^c)^{j-i}$ has order greater than $n$, so $\overline{\rho} = \oplus_{i=0}^{n-1} \overline{\mu}_i$ is primitive, by \cite[Lemma 5.1]{New21a}, and the characters $\overline{\mu}_i$ are pairwise distinct. If $\tau_0 \in G_F$
is an element such that $\epsilon(\tau_0)^2 \neq 1$, and $w$ is a place of $F$ unramified in $\overline{\rho}$ and split over $F^+$ such that $(\overline{\rho} \oplus \epsilon)(\Frob_w) = (\overline{\rho} \oplus \epsilon)(\tau_0 \tau_0^c)$, then $H^2(F_w, \ad \overline{\rho}(1)) = 0$; such an element $\tau_0$ exists since $p-1 > 2$. 

We will in fact show by induction on $k \geq 2$ that for each $k = 2, \dots, n$, there is an $\iota$-ordinary regular algebraic, cuspidal, conjugate self-dual automorphic representation $\Pi_k$ of $\GL_{k}(\bA_F)$ satisfying the following conditions:
\begin{itemize}
\item There is an isomorphism $\overline{r}_{\Pi_k, \iota} \cong \overline{\omega}^{1-k} \otimes \Sym^{k-1} \overline{r}_{\pi, \iota}|_{G_F}$.
\item There is a place $w_1 | v_1$ of $F$ such that $\Pi_{k, w_1}$ is an unramified twist of the Steinberg representation.
\item $\Pi_{k}$ is unramified at each $q$-adic place of $F$ not lying above $v_1$, and at each place of $F$ which is inert over $F^+$. 
\item For each embedding $\tau : F \to \overline{\bQ}_p$, we have 
\[ \mathrm{HT}_\tau(r_{\Pi_k, \iota}) = \cup_{i = 0}^{k-1} \mathrm{HT}_\tau(\omega^{1-k} \chi^{k-1-i} \chi^{c, i}). \]
\end{itemize} 
 For the base case $k = 1$, we apply \cite[Theorem 6.11]{Tho22} with $\pi_1 = \Omega^{-1} X | \cdot|^{1/2}$ and $\pi_2 = \Omega^{-1} X^c | \cdot |^{1/2}$. For the induction step, we apply \cite[Theorem 6.11]{Tho22} with $\pi_1 = \Pi_k \otimes \Omega^{-1} X | \cdot |^{1/2}$ and $\pi_2 = \Omega^{-k} X^{c, k} | \cdot |^{(k-1)/2}$. The level-raising congruence and genericity hypotheses of \cite[Theorem 6.11]{Tho22} are satisfied at each stage because there is an isomorphism
\[ \overline{r}_{\pi_1 \boxplus \pi_2, \iota} \cong \overline{\omega}^{-(k+1)} \otimes \oplus_{i=0}^{k+1} \overline{\chi}^{k+1-i} \overline{\chi}^{c, i}. \]
Condition (1) of \cite[Theorem 4.5]{Tho22} is satisfied because of our assumption on the existence of the subfield $F' \subset F$. This concludes the proof.
\end{proof}

\section{Proof of main theorem -- `everywhere finite slope' case}

Let $F$ be a totally real number field. Fix an integer $n \geq 2$. In this section, we will prove the following result, intermediate to our main theorem:
\begin{theorem}\label{thm_intermediate_step}
Let $\pi$ be a regular algebraic, cuspidal automorphic representation of $\GL_2(\bA_\bQ)$ without CM. Suppose that for each prime number $p$, $\pi_p$ is not supercuspidal. Then $\mathrm{BC}_{F / \bQ}(\Sym^{n-1} \pi)$ exists.
\end{theorem}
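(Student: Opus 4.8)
The strategy is to connect $\pi$, through a chain of congruences and of analytic continuations along eigencurves, to a CM modular form of $2$-power level, for which the symmetric power base change is explicitly available. The two main tools are: the level-raising/patching result Theorem~\ref{thm_congruence_to_n_regular_liftings}, combined with a \emph{functoriality lifting theorem} of the type described in the introduction, which propagates the existence of $\mathrm{BC}_{F/\bQ}(\Sym^{n-1}(-))$ across a congruence of residual representations (cf.~\cite{Tho22}); and the analytic continuation of functoriality along the $2$-adic tame level $1$ eigencurve, whose geometry is largely understood by Buzzard--Kilford~\cite{Buz05}. Throughout, set $M = \max(3, n[F:\bQ])$.

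\emph{Normalising the ramification, and removing it away from $2$.} Choose an auxiliary prime $p_0 > k$ with $\pi_{p_0}$ unramified and, for some $\iota_0 : \overline{\bQ}_{p_0} \to \bC$, with $\overline{r}_{\pi,\iota_0}(G_\bQ) \supseteq \SL_2(\bF_{p_0})$; this is possible since $\pi$ is non-CM. For each $l \in S(\pi)$, $\pi_l$ is not supercuspidal, hence $\overline{r}_{\pi,\iota_0}|_{G_{\bQ_l}}$ is reducible; pick a reducible lifting $\rho_l$ lying on the same irreducible component of the local lifting ring as $r_{\pi,\iota_0}|_{G_{\bQ_l}}$, chosen so that the form produced next is $M$-regular at $l$. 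Theorem~\ref{thm_congruence_to_n_regular_liftings} then yields a finite set $Q$ of auxiliary primes and a cuspidal, regular algebraic $\pi'$ of weight $k$, congruent to $\pi$ mod $p_0$, unramified outside $S(\pi)\cup Q$, not supercuspidal anywhere, and $M$-regular at every ramified prime. By the functoriality lifting theorem applied with the prime $p_0$ (using that $\pi,\pi'$ are Fontaine--Laffaille at $p_0$ and $\overline{r}_{\pi,\iota_0}\cong\overline{r}_{\pi',\iota_0}$ has large image), $\mathrm{BC}_{F/\bQ}(\Sym^{n-1}\pi)$ exists if and only if $\mathrm{BC}_{F/\bQ}(\Sym^{n-1}\pi')$ does; so we may assume $\pi$ is $M$-regular at each of its ramified primes. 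Now apply Corollary~\ref{cor_removing_a_prime_of_ramification} once for each odd prime of $S(\pi)$ --- legitimate, since non-CM-ness, non-supercuspidality everywhere, and $M$-regularity at all ramified primes (with $M \geq \max(3,n[F:\bQ])$) are all preserved by the corollary, as is the statement ``$\mathrm{BC}_{F/\bQ}(\Sym^{n-1}(-))$ exists'' --- and, if $\pi$ is ramified at $2$, include $2$ in the above as well. We arrive at a non-CM, cuspidal, regular algebraic $\pi''$ which is unramified outside $2$ and not supercuspidal at $2$, hence of tame level $1$ with respect to $p = 2$, with $\mathrm{BC}_{F/\bQ}(\Sym^{n-1}\pi'')$ existing if and only if $\mathrm{BC}_{F/\bQ}(\Sym^{n-1}\pi)$ does. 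It therefore suffices to treat $\pi''$.

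\emph{The endgame on the Buzzard--Kilford eigencurve.} Take $p = 2$, let $\cE_1$ be the ($2$-adic, cuspidal) tame level $1$ eigencurve, and choose an accessible refinement of $\pi''$, giving a classical point $z'' \in \cE_{1,\bC_p}$ on an irreducible component $\cC$. Using the Buzzard--Kilford description of the components of $\cE_1$ over the boundary of weight space --- and, if necessary, a further application of Theorem~\ref{thm_congruence_to_n_regular_liftings} together with the functoriality lifting theorem to pass to a congruent form whose component meets the locus of forms with complex multiplication by $\bQ(i)$ or $\bQ(\sqrt{-2})$, which have $2$-power level and hence tame level $1$ --- we may assume $\cC$ passes through a CM point $\theta$. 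For $\theta$, the representation $\Sym^{n-1} r_{\theta,\iota}$ is an isobaric sum of automorphic inductions of algebraic Hecke characters, so $\mathrm{BC}_{F/\bQ}(\Sym^{n-1}\theta)$ exists. Invoking the functoriality lifting theorem at $2$ in the residually dihedral situation (where the relevant automorphy propagation is available by soluble descent), we transfer this to a non-CM classical point $\pi^\star$ of $\cC$ congruent to $\theta$. Since $z''$ and the point attached to $\pi^\star$ lie on the common component $\cC$ and are both non-CM finite-slope points, Proposition~\ref{prop_analytic_continuation} applies --- in the ordinary case via its ordinary hypotheses (non-CM suffices), and in the non-ordinary case after possibly replacing $z''$ and $\pi^\star$ by nearby points of large image under $r_{(-),\iota}|_{G_{\bQ_2}}$, as furnished by Lemmas~\ref{lem_removing_ramification_non-ordinary_case} and \ref{lem_removing_ramification_ordinary_case} --- and yields that $\mathrm{BC}_{F/\bQ}(\Sym^{n-1}\pi'')$ exists. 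Combined with the previous step, this proves the theorem.

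\emph{The main obstacle} is the last step: one must control enough of the $2$-adic tame level $1$ eigencurve (via Buzzard--Kilford) to guarantee that, after the congruences permitted by Theorem~\ref{thm_congruence_to_n_regular_liftings}, one lands on an irreducible component passing through a CM point, and one must verify the hypotheses of Proposition~\ref{prop_analytic_continuation} at the two endpoints --- notably that the Zariski closure of the image of $r_{\pi^\star,\iota}$, or of its restriction to a decomposition group at $2$, contains $\SL_2$. Some additional care is required for the functoriality lifting theorem used to anchor the argument at the CM point $\theta$, since there the residual image is small.
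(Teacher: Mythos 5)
Your first step --- using Theorem~\ref{thm_congruence_to_n_regular_liftings} plus \cite[Theorem~4.2.1]{BLGGT} and Lemma~\ref{lem_large_image_over_F} to make $\pi$ $M$-regular at all ramified primes, and then repeatedly applying Corollary~\ref{cor_removing_a_prime_of_ramification} to strip away all odd ramification --- is correct, and is essentially the paper's reduction to the statement $(H_2)$ via the induction $(H_N)\Rightarrow(H_{N+1})$. The endgame, however, has a genuine gap.

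The problem is that you want the irreducible component $\cC$ of the $2$-adic tame level $1$ eigencurve through the non-CM point $z''$ (or a congruent replacement) to pass through a CM point $\theta$, and then to propagate the known functoriality at $\theta$ to a non-CM point $\pi^\star$ on $\cC$ via Proposition~\ref{prop_analytic_continuation}. This cannot work. First, a CM classical point and a non-CM classical point do not lie on a common irreducible component of the cuspidal eigencurve: the condition $\tr\rho = \eta\cdot\tr\rho$ (for $\eta$ a quadratic character) is Zariski-closed, so a component containing even one non-CM point has non-CM generic fibre, and all its classical specialisations are non-CM --- there is no mechanism via Theorem~\ref{thm_congruence_to_n_regular_liftings} or Buzzard--Kilford to ``arrange'' that $\cC$ hits a CM point. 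Second, and independently, Proposition~\ref{prop_analytic_continuation} explicitly forbids CM endpoints: in the ordinary case hypothesis $(2^{\mathrm{ord}})$ requires both forms to be non-CM, and in the non-ordinary case the hypothesis that $\overline{r_{\pi_0,\iota}(G_{\bQ_p})}$ contains $\SL_2$ fails for a CM form (whose image is dihedral). So even if $\theta$ lay on $\cC$, the proposition would not apply.

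The paper resolves this by \emph{decoupling} the two primes at play. The CM form $\sigma = \mathrm{AI}_{K/\bQ}(X)$ is introduced to furnish a residual representation $\overline{r}_{\sigma,\iota_t}$ mod a large \emph{auxiliary} prime $t$ with $\overline{\chi}/\overline{\chi}^c$ of order $>n(n-1)$. Theorem~\ref{thm_congruence_to_n_regular_liftings} produces a congruent non-CM $\sigma_1$ with a Steinberg local component, and the symmetric-power base change for $\sigma_1$ is then obtained from the \emph{level-raising automorphy lifting theorem} \cite[Theorem~9.1]{Tho22}, which is precisely engineered for this residually dihedral situation. It is this $\sigma_1$ (not $\sigma$ or $\theta$) that is cleaned up by Corollary~\ref{cor_removing_a_prime_of_ramification} and fed into the ``ping-pong'' \cite[Lemma~3.7]{New21a} with Proposition~\ref{prop_analytic_continuation} on the $2$-adic eigencurve. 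The propagation CM$\to$non-CM happens via the congruence at $t$ and \cite[Theorem~9.1]{Tho22}, not on the eigencurve at $p=2$; the eigencurve at $p=2$ only ever connects non-CM points. You flag the residually dihedral functoriality lifting as needing ``additional care,'' but it is actually the central input (the Tho22 theorem), and your placement of it on the $p=2$ eigencurve is where the argument breaks.
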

Fix an integer $M \geq \max(3, n[F: \bQ])$. In fact, we will prove by induction on $N \geq 2$ the following statement:
\begin{itemize}
\item[$(H_N)$] Let $\pi$ be a representation satisfying the hypotheses of Theorem \ref{thm_intermediate_step}. If $p > N$ is a prime number, suppose that $\pi_p$ is unramified. If $p \leq N$ is a prime number, suppose that $\pi_p$ is $M$-regular. Then $\mathrm{BC}_{F / \bQ}(\Sym^{n-1} \pi)$ exists.
\end{itemize}
\begin{proposition}
Suppose that $(H_N)$ holds for all $N \geq 2$. Then Theorem \ref{thm_intermediate_step} is true.
\end{proposition}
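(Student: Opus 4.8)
The plan is to reduce, by operations that do not affect the conclusion, to a representation satisfying the hypotheses of $(H_N)$ for a suitable $N$, and then to quote that statement. Fix $M\geq\max(3,n[F:\bQ])$ and an integer $N$ larger than every prime at which $\pi$ is ramified. Two kinds of move will be used freely. First, replacing $\pi$ by a twist $\pi\otimes\eta$ by a Dirichlet character: this preserves the hypotheses of Theorem \ref{thm_intermediate_step} (it creates neither CM nor a supercuspidal local component), commutes with $\Sym^{n-1}$ and with $\mathrm{BC}_{F/\bQ}$ and so preserves existence of the base change, and — since at an unramified prime $l$ it multiplies both refinement parameters of $\pi_l$ by the common scalar $\eta(l)$ — does not affect $M$-regularity at unramified primes; moreover $\eta$ may be chosen to be ramified only at primes where $\pi$ is ramified. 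Second, moving $\pi$ along an irreducible component of the $p$-adic eigencurve as in Lemmas \ref{lem_removing_ramification_non-ordinary_case} and \ref{lem_removing_ramification_ordinary_case}: by Proposition \ref{prop_analytic_continuation} this preserves existence of the base change under the hypotheses recorded there.

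First I would arrange, by a single twist, that $\pi_l$ is $M$-regular at every prime $l$ at which $\pi$ is ramified. Choosing a Dirichlet character with the appropriate restriction to $\bZ_l^\times$ at each such $l$, I may assume that $\pi_l$ is an unramified twist of the Steinberg representation, or a ramified principal series one of whose constituents is unramified, or (the only remaining non-supercuspidal possibility, when both constituents share the same ramified part) is in fact unramified. In the first case the ratio of refinement parameters is $l^{\pm1}$ and in the second it is a ramified character, so in either case $\pi_l$ is $M$-regular. Thus after this step the only primes at which $M$-regularity can fail are unramified ones, and $\pi$ is still ramified only at primes $\le N$.

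Next I would repair $M$-regularity at the finitely many primes $l\le N$. Suppose $\pi$ is unramified at such an $l$ with $\pi_l$ not $M$-regular; then the two refinement parameters have equal $l$-adic valuation, so $\pi$ is not $\iota$-ordinary at $l$ and $r_{\pi,\iota}|_{G_{\bQ_l}}$ is irreducible and crystalline with distinct Hodge--Tate weights. As $\pi_l$ is not supercuspidal, this representation is not induced from a character of a subgroup of index $2$, hence — being irreducible, two-dimensional, with infinite image — the Zariski closure of its image contains a conjugate of $\SL_2$. I may therefore apply Lemma \ref{lem_removing_ramification_non-ordinary_case} (with $p=l$ and auxiliary set the primes $\le N$ at which $\pi$ is ramified or already known to be $M$-regular), or Lemma \ref{lem_removing_ramification_ordinary_case} in the $\iota$-ordinary case, which does not occur here, to replace $\pi$ by a representation $\pi'$ lying on the same eigencurve component, unramified and $M$-regular at $l$, $M$-regular at every prime where $\pi$ was, and unchanged up to inertial equivalence away from $l$ — in particular ramified at no new prime; Proposition \ref{prop_analytic_continuation} then shows that $\mathrm{BC}_{F/\bQ}(\Sym^{n-1}\pi)$ exists if and only if $\mathrm{BC}_{F/\bQ}(\Sym^{n-1}\pi')$ does. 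Iterating over the bad primes $l\le N$ reduces to the case in which $\pi_l$ is $M$-regular for every prime $l\le N$ while $\pi$ is ramified only at primes $\le N$; that is, $\pi$ satisfies the hypotheses of $(H_N)$, which therefore gives the conclusion. (Alternatively, after this repair one could strip all ramification using Corollary \ref{cor_removing_a_prime_of_ramification} and appeal to $(H_2)$.)

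The step I expect to be the main obstacle is the third paragraph. Unlike a ramified prime, an unramified prime need not be $M$-regular, so one is forced to move $\pi$ on the eigencurve, and the essential input is the verification — from non-supercuspidality — that the corresponding local Galois representation has image whose Zariski closure contains $\SL_2$, so that Lemma \ref{lem_removing_ramification_non-ordinary_case} (or Lemma \ref{lem_removing_ramification_ordinary_case}) may be invoked. One must also keep careful track of the auxiliary sets in the successive applications of these lemmas, so that a prime once made $M$-regular stays so and no new ramification is introduced.
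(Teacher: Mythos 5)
Your second paragraph (twisting to arrange $M$-regularity at the ramified primes) is fine, and you correctly identify the third paragraph as the crux. Unfortunately the crux contains a genuine gap, and it is precisely the one the paper introduces Theorem \ref{thm_congruence_to_n_regular_liftings} to close.

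The problem is the claim that, for an unramified prime $l\le N$ with $\pi_l$ not $M$-regular, the Zariski closure of $r_{\pi,\iota}(G_{\bQ_l})$ contains a conjugate of $\SL_2$. You deduce this from the fact that $\pi_l$ is not supercuspidal, but that is a statement about the smooth representation $\pi_l$ of $\GL_2(\bQ_l)$, which controls the Weil--Deligne representation at $l$ of $r_{\pi,\iota'}$ for $\iota' : \overline{\bQ}_{p'}\to\bC$ with $p'\ne l$; it does \emph{not} control the structure of the $l$-adic local representation $r_{\pi,\iota}|_{G_{\bQ_l}}$, which is crystalline and can perfectly well be induced even though $\pi_l$ is unramified. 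The standard example is a supersingular prime: if $a_l(\pi)=0$ (which certainly happens for non-CM forms) then $\pi_l$ is an unramified principal series with Satake parameters $\alpha,-\alpha$, so $(\alpha/\beta)^2=1$ and $\pi_l$ fails to be $M$-regular for every $M\ge 2$, while $r_{\pi,\iota}|_{G_{\bQ_l}}$ is irreducible and induced from $\bQ_{l^2}$, with local Zariski closure a normalizer of a torus. Thus the hypothesis of Lemma \ref{lem_removing_ramification_non-ordinary_case} is genuinely violated, and the eigencurve step cannot be started. The implication you want goes the other way: in the paper it is precisely $M$-regularity (with $M\ge 3$, via \cite[Lemma 3.5]{New21a}) that is used to deduce bigness of the local image, as in the proof of Corollary \ref{cor_removing_a_prime_of_ramification}; you cannot bootstrap it in the direction you need.

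The paper avoids this by not attacking the offending primes directly on the eigencurve. Instead it fixes a large auxiliary prime $t$ with $\overline{r}_{\pi,\iota_t}(G_\bQ)\supset\SL_2(\bF_t)$ and invokes Theorem \ref{thm_congruence_to_n_regular_liftings} to manufacture a congruent $\pi'$ (with $\overline{r}_{\pi',\iota_t}\cong\overline{r}_{\pi,\iota_t}$) that is $M$-regular at every prime $\le N$, ramified only at $S(\pi)\cup Q$ for some set $Q$ of large auxiliary primes, and with matching local deformations at $S(\pi)$. An automorphy lifting theorem \cite[Theorem 4.2.1]{BLGGT} at the prime $t$, together with Lemma \ref{lem_large_image_over_F}, shows that $\mathrm{BC}_{F/\bQ}(\Sym^{n-1}\pi)$ and $\mathrm{BC}_{F/\bQ}(\Sym^{n-1}\pi')$ exist together; then one strips the primes of $Q$ one at a time with Corollary \ref{cor_removing_a_prime_of_ramification} (for which the $M$-regularity hypothesis is now in place), landing at a $\pi_0$ satisfying $(H_N)$. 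So the missing ingredient in your argument is exactly this Taylor--Wiles--Kisin level-raising step, which you need in order to \emph{create} $M$-regularity at the difficult unramified primes before any eigencurve move is possible.
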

\begin{proof}
Let $\pi$ be a representation satisfying the hypotheses of Theorem \ref{thm_intermediate_step}. Choose an integer $N$ such that $\pi_p$ is unramified if $p > N$. After replacing $\pi$ by a character twist, we can assume that it is of weight $k \geq 2$. Let $t$ be a prime number satisfying the following conditions:
\begin{itemize}
\item $t > \max(N, 2(n+1), (n-1)(k-1))$.
\item There is an isomorphism $\iota_t : \overline{\bQ}_t \to \bC$ such that $\overline{r}_{\pi, \iota_t}(G_\bQ)$ contains a conjugate of $\SL_2(\bF_t)$.
\end{itemize}
By Theorem \ref{thm_congruence_to_n_regular_liftings}, we can find a regular algebraic, cuspidal automorphic representation $\pi'$ of $\GL_2(\bA_\bQ)$ of weight $k$ satisfying the following conditions:
\begin{itemize}
\item There is an isomorphism $\overline{r}_{\pi', \iota_t} \cong \overline{r}_{\pi, \iota_t}$. 
\item There is a finite set of primes $Q$ such that for each $q \in Q$, $\pi_q$ is unramified, $q > N$, and $\rec_{\bQ_q}(\pi'_q) \cong \chi_{1, q} \oplus \chi_{2, q}$ for characters $\chi_{1, q}, \chi_{2, q} : \bQ_q^\times \to \bC^\times$ such that the character $\chi_{1, q}|_{\bZ_q^\times}$ has order $p^a$ for some $p^a > M$, and $\chi_{1, q} \chi_{2, q}$ is unramified.
\item For each prime number $p \not\in Q$, we have $r_{\pi, \iota_t}|_{G_{\bQ_p}} \sim r_{\pi', \iota_t}|_{G_{\bQ_p}}$.
\item For each prime number $p \leq N$, $\pi'_p$ is $M$-regular.
\end{itemize}
Both representations $\Sym^{n-1} r_{\pi', \iota_t}$ and $\Sym^{n-1} r_{\pi, \iota_t}$ are Fontaine--Laffaille and residually irreducible, so using Lemma \ref{lem_large_image_over_F} we can apply an automorphy lifting theorem (e.g. \cite[Theorem 4.2.1]{BLGGT}) to conclude that $\mathrm{BC}_{F / \bQ}(\Sym^{n-1} \pi)$ exists if and only if $\mathrm{BC}_{F / \bQ}(\Sym^{n-1} \pi')$ does. 

Enumerate $Q = \{ q_1, \dots, q_r \}$. We claim that we can find a sequence $\pi_0, \dots, \pi_r$ of regular algebraic, cuspidal automorphic representations of $\GL_2(\bA_\bQ)$, without CM, and with the following properties:
\begin{itemize}
\item $\pi_r = \pi'$.
\item Let $S(\pi_i)$ denote the set of primes at which $\pi_i$ is ramified. Then $S(\pi_i) = S(\pi) \cup \{ q_1, \dots, q_i \}$. In particular, $S(\pi_0) = S(\pi)$.
\item For each prime number $p$ such that $p \in S(\pi_i)$ or $p \leq N$, $\pi_{i, p}$ is $M$-regular.
\item For each $i = r, \dots, 1$, the automorphic representation  $\mathrm{BC}_{F / \bQ}(\Sym^{n-1} \pi_i)$ exists if and only if $\mathrm{BC}_{F / \bQ}(\Sym^{n-1} \pi_{i-1})$ does.
\end{itemize}
This will complete the proof of the Proposition. Indeed, the hypothesis $(H_N)$ implies that $\mathrm{BC}_{F / \bQ}(\Sym^{n-1} \pi_0)$ exists, a property which can then be propagated back along the chain $\pi_0, \dots, \pi_r=\pi', \pi$. To get these representations, we simply take $\pi_r = \pi'$ and then apply Corollary \ref{cor_removing_a_prime_of_ramification} $r$ times.
\end{proof} 
\begin{proposition}\label{prop_existence_of_M_regular_form_with_base_change_symmetric_power}
Let $p$ be a prime. There exists a regular, cuspidal algebraic automorphic representation $\pi$ of $\GL_2(\bA_\bQ)$ without CM with the following properties:
\begin{enumerate}
\item $\pi$ is everywhere unramified.
\item $\pi_p$ admits a numerically non-critical, $M$-regular refinement.
\item $\mathrm{BC}_{F / \bQ}(\Sym^{n-1} \pi)$ exists. 
\end{enumerate} 
\end{proposition}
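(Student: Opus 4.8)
The plan is to exhibit a single seed cusp form of level $1$ whose symmetric power base change can be established directly, and then to use the analytic‑continuation results of \S3 to move it to a form with the required refinement at $p$. For the seed, I would fix an auxiliary prime $\ell \neq p$ and a non‑CM cusp form $f_0$ of level $1$ and some weight $k_0 \geq 2$ chosen so that the residual representation $\overline{r}_{f_0, \iota_\ell}$ is small: its semisimplification is either $\overline{\mu}_1 \oplus \overline{\mu}_2$ with $\overline{\mu}_1 / \overline{\mu}_2$ of order $> 2(n-1)$, or is dihedral, induced from an imaginary quadratic field $K_0$ by a character whose ratio with its conjugate has order $> 2(n-1)$. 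Concretely one can take $f_0 = \Delta$ with $\ell = 691$ (so that $\overline{r}_{\Delta, \iota_\ell}^{\,\mathrm{ss}} \cong 1 \oplus \overline{\chi}_{\mathrm{cyc}}^{11}$), or, for larger $n$, a weight $k_0$ cusp form congruent mod $\ell$ to the Eisenstein series $E_{k_0}$ for a sufficiently large prime $\ell$ dividing the numerator of $B_{k_0}$; there is enough freedom in the choice of $\ell, k_0, f_0$ to avoid the finitely many degeneracies that a given $F$ could impose. By the $F = \bQ$ case of symmetric power functoriality \cite{New21a, New21b}, $\Sym^{n-1} f_0$ exists over $\bQ$, furnishing a geometric, pure, polarizable $\ell$‑adic representation $\Sym^{n-1} r_{f_0, \iota_\ell}$; once $\ell$ is large compared to $n k_0$ it is Fontaine--Laffaille at $\ell$, and it remains irreducible on restriction to $G_F$.

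The crux is to prove $\Sym^{n-1} r_{f_0, \iota_\ell}|_{G_F}$ automorphic, i.e.\ that $\mathrm{BC}_{F/\bQ}(\Sym^{n-1} f_0)$ exists. Here one observes that $\overline{\Sym^{n-1} r_{f_0, \iota_\ell}}|_{G_F} = \Sym^{n-1}\,\overline{r}_{f_0, \iota_\ell}|_{G_F}$ decomposes, by the explicit branching rule for symmetric powers of a reducible (resp.\ dihedral) two‑dimensional representation, as a direct sum of pairwise non‑isomorphic, absolutely irreducible Galois representations, each induced from a character of $G_{K_0 F}$ or of $G_F$ — the order hypothesis on the constituent ratio being exactly what makes the summands distinct. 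Each summand is residually automorphic over $F$, being the reduction of the Galois representation attached to an automorphic induction of a Hecke character of the appropriate infinity type (available by Arthur--Clozel, cf.\ \cite[Lemma 1.3]{blght}); so the whole residual representation is a multiplicity‑free sum of residually automorphic pieces. One then applies an automorphy lifting theorem over a totally real field for representations whose reduction is reducible and induced from characters, of the type underlying the potential automorphy and symmetric power results of \cite{blght, BLGGT, Tho22} — possibly after a soluble base change, harmless by Lemma \ref{lem_OK_after_base_change} and soluble descent for $\GL_n$, made to secure the local hypotheses of that theorem. I expect this automorphy lifting step — with its full package of local conditions, in particular at $\ell$, and the real delicacy of the residually reducible/dihedral case over an arbitrary totally real base — to be the main obstacle.

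It remains to arrange the behaviour at $p$. Since $f_0$ has level $1$, $f_{0,p}$ is unramified and $(f_0, \chi_0)$ defines a classical point of the tame level $1$ $p$‑adic eigencurve $\cE_0$ for each of its two refinements. Choosing $k_0$ so that $1 + p^{k_0 - 1} \not\equiv 0 \bmod \ell$ forces $a_p(f_0) \neq 0$, hence the two slopes at $p$ to be distinct, and then the low‑slope refinement is already numerically non‑critical and $M$‑regular (its parameter ratio has nonzero $p$‑adic valuation, so is not a root of unity); one may simply take $\pi = f_0$. Failing that, one moves along the irreducible component of $\cE_{0, \bC_p}$ through the point of $f_0$, using Proposition \ref{prop_analytic_continuation}, to a classical point $\pi$ lying outside the proper Zariski‑closed locus of numerically critical or refinement‑degenerate classical points; the hypotheses of Proposition \ref{prop_analytic_continuation} are met along the component — $(n, F)$‑regularity of the refinements follows from $M$‑regularity for $M$ large, or in the non‑ordinary case from the big $p$‑adic image of \cite[Lemma 3.5]{New21a} — so $\mathrm{BC}_{F/\bQ}(\Sym^{n-1} \pi)$ exists. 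In either case the resulting $\pi$ has the three asserted properties.
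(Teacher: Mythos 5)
Your proposal takes a different route from the paper, and the step you yourself flag as ``the main obstacle'' is a genuine gap that the sketch does not fill. The paper does not invoke a residually reducible or residually dihedral automorphy lifting theorem for $\Sym^{n-1} r_{f_0,\iota_\ell}|_{G_F}$ in the way you propose. Instead, the seed is a CM form $\sigma = \mathrm{AI}_{K/\bQ}(X|\cdot|^{1/2})$ for an imaginary quadratic $K$ unramified at $p$, whose $\iota_t$-adic Galois representation is residually dihedral with large constituent ratio, for a prime $t$ split in $KF$ taken as large as needed. Theorem \ref{thm_congruence_to_n_regular_liftings} then produces a non-CM $\sigma_1$ congruent to $\sigma$ modulo $t$ that is $\iota_t$-ordinary, $M$-regular at the required primes, and — crucially — an unramified twist of Steinberg at an auxiliary prime $q$. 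It is exactly this Steinberg local component, together with ordinarity and the residually dihedral hypothesis, that lets one apply \cite[Theorem 9.1]{Tho22} to conclude directly that $\mathrm{BC}_{F/\bQ}(\Sym^{n-1}\sigma_1)$ exists; that theorem packages a $p$-adic level-raising argument and is the precise input the paper relies on. Your sketch never introduces a Steinberg prime, and ``an automorphy lifting theorem over a totally real field for representations whose reduction is reducible and induced from characters'' is not something you or the cited references can invoke off the shelf — least of all when (as in your $\Delta$ modulo $691$ example) the residual representation is genuinely reducible rather than dihedral, a case which does not fit the framework of \cite{Tho22} and for which no such theorem in the required generality is available.

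There is also a subtler problem with your choice of seed. You want a non-CM level-one $f_0$ and a prime $\ell$ at which $\overline{r}_{f_0,\iota_\ell}$ is small with constituent ratio of order $> 2(n-1)$; for a non-CM form the reducible or dihedral primes are exceptional and there is no general mechanism to force a given lower bound on that order as $n$ grows. Starting from a CM $\sigma$ gives a dihedral residual representation at every prime split in $K$, so one can take $t$ as large as one wishes and prove the order bound by the elementary degree estimate in the paper's proof ($L(\zeta_t)/KF(\zeta_t)$ of degree $> t/2k > n(n-1)$). Finally, your last paragraph controlling the refinement at $p$ is sound in outline, but in the paper this step is handled by repeated application of Corollary \ref{cor_removing_a_prime_of_ramification}, which combines the eigenvariety walk with the $M$-regularity bookkeeping; your ``move to avoid the Zariski-closed bad locus'' reinvents part of Lemmas \ref{lem_removing_ramification_non-ordinary_case} and \ref{lem_removing_ramification_ordinary_case} but does not also keep track of the requirement that the resulting $\pi$ be unramified at $p$ and remain non-CM.
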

\begin{proof}
Let $K$ be an imaginary quadratic field in which $p$ is unramified, and let $X : \bA_K^\times / K^\times \to \bC^\times$ be an everywhere unramified character of type $A_0$ such that if $z \in K_\infty^\times$ and $\tau : K \to \bC$ is a fixed embedding, then $X(z) = \tau(z)^{1-k}$ for some $k \geq 2$. Let $\sigma = \mathrm{AI}_{K / \bQ}(X|\cdot|^{1/2})$ be the automorphic induction of $X|\cdot|^{1/2}$; then $\sigma$ is a regular algebraic, cuspidal automorphic representation of $\GL_2(\bA_\bQ)$ of weight $k$.

We choose a prime $t > \max(2k n(n-1), p)$ which is split in $KF$, and an isomorphism $\iota_t : \overline{\bQ}_t \to \bC$. Then $\sigma$ is $\iota_t$-ordinary. Let $\chi = r_{X, \iota_t}$, so that $r_{\sigma, \iota_t} = \Ind_{G_K}^{G_{\bQ}} \chi$. We claim that $\overline{\chi} / \overline{\chi}^c|_{G_{KF(\zeta_t)}}$ has order greater than $n(n-1)$ (in particular, that $\overline{r}_{\sigma, \iota_t}|_{G_{\bQ(\zeta_t)}}$ is absolutely irreducible). To see this, let $L / KF$ be the extension cut out by $\overline{\chi} / \overline{\chi}^c$. Since $t$ splits in $KF$, this extension is cyclic of order at least $(t-1) / (k-1) > t / k$. Considering the action of $c$ on the Galois group, we see that $L \cap KF(\zeta_t)$ is an extension of $KF$ of degree at most 2. Therefore $L(\zeta_t) / KF(\zeta_t)$ has degree greater than $t / 2k > n(n-1)$.

Now we apply Theorem \ref{thm_congruence_to_n_regular_liftings}, with $S = S(\sigma)\cup\{p,q\}$ for a prime $q$ splitting completely in the extension of $\bQ(\zeta_t)$ cut out by $\overline{r}_{\sigma, \iota_t}$. For $l \in S(\sigma)\cup\{p\}$, we choose $\rho_l$ to be any reducible lifting of $\overline{r}_{\sigma, \iota_t}|_{G_{\bQ_l}}$. We choose $\rho_q$ corresponding to the Steinberg representation. The theorem tells us that we can find another regular algebraic, cuspidal automorphic representation $\sigma_1$ of $\GL_2(\bA_\bQ)$ of weight $k$ and satisfying the following conditions:
\begin{itemize}
	\item There is an isomorphism $\overline{r}_{\sigma_1, \iota_t} \cong \overline{r}_{\sigma, \iota_t}$.
	\item $\sigma_{1,q}$ is an unramified twist of the Steinberg representation.
	\item $\sigma_{1,t}$ is unramified.
	\item For every prime number $l \in S(\sigma_1) \cup \{ p \}$, $\sigma_{1, l}$ admits an $M$-regular refinement.
\end{itemize} 
In particular, $\sigma_1$ is $\iota_t$-ordinary (since $\overline{r}_{\sigma_1, \iota_t}$ is Fontaine--Laffaille and congruent to the ordinary representation $\overline{r}_{\sigma, \iota_t}$) and has no supercuspidal local components. 

We see that $\sigma_1$ satisfies the hypotheses of Theorem \ref{thm_residually_dihedral_symmetric_power} (with respect to the extension $F / \bQ$), so we conclude that $\mathrm{BC}_{F / \bQ}(\Sym^{n-1} \sigma_1)$ exists. We can then repeatedly apply Corollary \ref{cor_removing_a_prime_of_ramification} to $\sigma_1$ to obtain an automorphic representation $\pi$ with the required properties.
\end{proof}
We actually use Proposition \ref{prop_existence_of_M_regular_form_with_base_change_symmetric_power} only in the case $p = 2$. We prove a similar result (in the case $p = 2$) in \cite{New21a}, where we used special properties of level 1 modular forms (cf. \cite[Lemma 3.4]{New21a}). The argument given here is more robust, with a view to future generalisations. 
\begin{proposition}\label{prop_base_change_for_level_1_forms}
$(H_2)$ holds.
\end{proposition}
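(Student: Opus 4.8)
The plan is to deduce $(H_2)$ by propagating the conclusion of Proposition~\ref{prop_existence_of_M_regular_form_with_base_change_symmetric_power}, in the case $p = 2$, along the $2$-adic, tame level $1$ eigencurve, using the analytic continuation result Proposition~\ref{prop_analytic_continuation} and the Buzzard--Kilford description of this eigencurve \cite{Buz05}. This is the analogue of the corresponding argument of \cite{New21a}, which we follow closely. Let $\pi$ be a representation as in the statement of $(H_2)$. If $\pi$ is ramified at $2$, then $2$ is its only prime of ramification and $\pi_2$ is $M$-regular, so Corollary~\ref{cor_removing_a_prime_of_ramification} (applied with the prime $2$) replaces $\pi$ by an everywhere unramified, non-CM representation, still $M$-regular at $2$, without changing whether $\mathrm{BC}_{F/\bQ}(\Sym^{n-1}\pi)$ exists; we therefore assume $\pi$ is everywhere unramified, of (necessarily even) weight $k \geq 2$ after a character twist. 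Fix $\iota : \overline{\bQ}_2 \to \bC$.

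I next observe that any classical point of the eigencurve $\cE_{0,\bC_2}$ whose local component at $2$ is unramified is ``good'' for Proposition~\ref{prop_analytic_continuation}. Indeed, let $\pi'$ be such a form, of even weight $k'$; its two $2$-stabilisations have distinct slopes (they sum to $k'-1$), so $\pi'$ has a refinement $\chi'$ with $\chi'_1$ unramified and hence determines a point of $\mathcal{RA}_0$; the ratio of its two $U_2$-eigenvalues has nonzero $2$-adic valuation, so $\pi'_2$ is $M$-regular for every $M$ and $\chi'$ is $(n,F)$-regular; the associated point of $\cE_{0,\bC_2}$ is non-critical, since the only critical classical points of $\cE_0$ are non-ordinary refinements of ordinary forms and there are no $\iota$-ordinary cuspidal classical points in tame level $1$ at $p=2$; and, $\pi'$ not being $\iota$-ordinary, $r_{\pi',\iota}|_{G_{\bQ_2}}$ is irreducible, so by \cite[Lemma~3.5]{New21a} the Zariski closure of $r_{\pi',\iota}(G_{\bQ_2})$ contains $\SL_2$. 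This applies in particular to $\pi$ itself, yielding a classical point $z_\pi \in \cE_{0,\bC_2}(\overline{\bQ}_2)$, and to the everywhere unramified, non-CM representation $\sigma$ produced by Proposition~\ref{prop_existence_of_M_regular_form_with_base_change_symmetric_power} for $p=2$, yielding a classical point $z_\sigma$; by that proposition $\mathrm{BC}_{F/\bQ}(\Sym^{n-1}\sigma)$ exists.

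Finally one invokes the Buzzard--Kilford analysis of $\cE_{0,\bC_2}$ near the boundary of weight space --- where it is a disjoint union of pieces each mapping isomorphically to the weight annulus --- and deduces, as in \cite{New21a}, that $z_\pi$ and $z_\sigma$ can be joined by a finite chain $z_\sigma = x_0, x_1, \dots, x_r = z_\pi$ of classical points of $\cE_{0,\bC_2}$, each with unramified local component at $2$, such that every consecutive pair $x_i, x_{i+1}$ lies on a common irreducible component of $\cE_{0,\bC_2}$. By the preceding paragraph every pair $(x_i, x_{i+1})$ satisfies the first set of hypotheses of Proposition~\ref{prop_analytic_continuation} (large image at $2$ and $(n,F)$-regularity, together with membership of a common irreducible component). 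Applying that proposition successively for $i = 0, \dots, r-1$, and starting from the known existence of $\mathrm{BC}_{F/\bQ}(\Sym^{n-1}\sigma)$, we conclude that $\mathrm{BC}_{F/\bQ}(\Sym^{n-1}\pi)$ exists, which is $(H_2)$. The step I expect to be the main obstacle is the geometric input of the last paragraph: constructing the chain of classical points with consecutive members on common irreducible components. This rests on the explicit Buzzard--Kilford computation at the boundary of weight space and on understanding how those boundary components extend over the interior, and we import it --- together with the underlying structural statement about the $2$-adic tame level $1$ eigencurve --- from \cite{New21a}. (If $\iota$-ordinary cuspidal tame level $1$ classical points were in fact to occur, the ordinary variant of Proposition~\ref{prop_analytic_continuation} would apply to the corresponding segments of the chain, so this causes no difficulty.)
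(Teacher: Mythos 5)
Your proof is correct and follows essentially the same route as the paper's: invoke Proposition~\ref{prop_existence_of_M_regular_form_with_base_change_symmetric_power} (at $p=2$) to produce a base point with the required lifting, then propagate along the $2$-adic tame level $1$ eigencurve using the Buzzard--Kilford ``ping-pong'' of \cite[Lemma~3.7]{New21a} together with Proposition~\ref{prop_analytic_continuation}. The only differences are that you insert a harmless preliminary reduction via Corollary~\ref{cor_removing_a_prime_of_ramification} to the everywhere-unramified case (not strictly needed, since the ping-pong of \emph{loc.\ cit.} handles tame level $1$ classical points with $\pi_2$ ramified directly) and that you spell out the hypothesis checks for Proposition~\ref{prop_analytic_continuation}, correctly hedging at the end against the possibility of $2$-ordinary tame level $1$ cuspidal points.
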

\begin{proof}
By Proposition \ref{prop_existence_of_M_regular_form_with_base_change_symmetric_power}, we can find an everywhere unramified regular algebraic, cuspidal automorphic representation $\pi_0$ of $\GL_2(\bA_\bQ)$ such that $\pi_{0, 2}$ is $M$-regular and such that $\mathrm{BC}_{F / \bQ}(\Sym^{n-1} \pi_0)$ exists. The result then follows on combining \cite[Lemma 3.7]{New21a} with Proposition \ref{prop_analytic_continuation}. (This is the ``ping pong'' argument of \emph{op. cit.}, which relies on the explicit description of the 2-adic, tame level 1 eigencurve given in \cite{Buz05}.)
\end{proof}
\begin{proposition}
If $N \geq 2$, then $(H_N) \Rightarrow (H_{N+1})$. Therefore, Theorem \ref{thm_intermediate_step} is true. 
\end{proposition}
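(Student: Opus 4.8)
The plan is to obtain $(H_{N+1})$ from $(H_N)$ by a single application of Corollary~\ref{cor_removing_a_prime_of_ramification}, and then to deduce Theorem~\ref{thm_intermediate_step} by feeding the resulting induction into the Proposition above which reduces Theorem~\ref{thm_intermediate_step} to the truth of $(H_N)$ for all $N \geq 2$, using Proposition~\ref{prop_base_change_for_level_1_forms} as the base case $(H_2)$.

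For the inductive step, fix $N \geq 2$. If $N+1$ is not prime then the hypotheses of $(H_{N+1})$ and of $(H_N)$ are literally the same and there is nothing to do, so we may assume $p := N+1$ is prime. Let $\pi$ satisfy the hypotheses of $(H_{N+1})$, so $\pi$ is regular algebraic, cuspidal, without CM, has no supercuspidal local components, is unramified at every prime $> N+1$, and is $M$-regular at every prime $\leq N+1$ (with $M \geq \max(3, n[F:\bQ])$ as fixed at the start of the section). First I would take $S$ to be the set of all primes $l \leq N+1$: then $S$ contains the (finite) set of primes at which $\pi$ is ramified, $\pi_l$ is $M$-regular for every $l \in S$, and $p \in S$, so the hypotheses of Corollary~\ref{cor_removing_a_prime_of_ramification} hold. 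The Corollary furnishes a regular algebraic, cuspidal representation $\pi'$ of $\GL_2(\bA_\bQ)$ without CM with $\pi'_p$ unramified, $\pi'_l$ inertially equivalent to $\pi_l$ for every $l \neq p$, $\pi'_l$ $M$-regular for every $l \in S$, and such that $\mathrm{BC}_{F/\bQ}(\Sym^{n-1}\pi)$ exists if and only if $\mathrm{BC}_{F/\bQ}(\Sym^{n-1}\pi')$ does.

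The next step is to check that $\pi'$ again satisfies the hypotheses of $(H_N)$. It is regular algebraic, cuspidal and without CM; it is unramified at every prime $l > N$, since $\pi'_p$ is unramified by construction and for $l > N+1$ the representation $\pi'_l$ is inertially equivalent to the unramified $\pi_l$; and $\pi'_l$ is $M$-regular for every prime $l \leq N$, such primes lying in $S$. The point requiring genuine care is that $\pi'$ has no supercuspidal local components: this is clear at $p$ (where $\pi'_p$ is unramified), and at $l \neq p$ it has to be extracted from the hypothesis that $\pi_l$ is not supercuspidal. I would argue it using the construction of $\pi'$ in Corollary~\ref{cor_removing_a_prime_of_ramification} via Lemmas~\ref{lem_removing_ramification_non-ordinary_case} and~\ref{lem_removing_ramification_ordinary_case}: there $\pi'$ arises from a classical point on an irreducible component of the tame-level eigencurve through the point attached to $\pi$, and along such a component the local inertial type at each fixed prime $l \neq p$ (equivalently, the Bernstein component of $\pi'_l$, in particular whether or not it is supercuspidal) is constant; hence $\pi'_l$ is non-supercuspidal because $\pi_l$ is. Granting this, $(H_N)$ applies to $\pi'$, so $\mathrm{BC}_{F/\bQ}(\Sym^{n-1}\pi')$ exists, and therefore so does $\mathrm{BC}_{F/\bQ}(\Sym^{n-1}\pi)$; this establishes $(H_{N+1})$.

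Finally, $(H_2)$ holds by Proposition~\ref{prop_base_change_for_level_1_forms}, so the induction just carried out gives $(H_N)$ for all $N \geq 2$, and Theorem~\ref{thm_intermediate_step} follows from the Proposition above reducing it to the statements $(H_N)$. I expect essentially all of this argument to be routine bookkeeping on top of the already-proved Corollary~\ref{cor_removing_a_prime_of_ramification} and base case; the only genuinely delicate point is the stability of the ``no supercuspidal component'' hypothesis when passing from $\pi$ to $\pi'$, i.e.\ the constancy of the local inertial type at primes $l \neq p$ along irreducible components of the eigencurve.
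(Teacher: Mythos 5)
Your argument is correct and matches the paper's: a single application of Corollary~\ref{cor_removing_a_prime_of_ramification} (after reducing to the case that $p:=N+1$ is prime with $\pi_p$ ramified) followed by the inductive hypothesis $(H_N)$, with base case $(H_2)$ from Proposition~\ref{prop_base_change_for_level_1_forms}. The one thing worth correcting is that the point you single out as ``genuinely delicate'' --- that $\pi'$ has no supercuspidal local components --- is in fact automatic and requires no constancy-of-inertial-type argument on the eigencurve: for every prime $l \le N$, conclusion~(2) of Corollary~\ref{cor_removing_a_prime_of_ramification} guarantees that $\pi'_l$ is $M$-regular, and a local component admitting an ($M$-regular) accessible refinement has a non-zero Jacquet module, hence cannot be supercuspidal; while for $l > N$ the representation $\pi'_l$ is unramified (either because $l=p$, or because it is inertially equivalent to the unramified $\pi_l$), so again not supercuspidal.
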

\begin{proof}
Suppose that $(H_N)$ holds, and let $\pi$ be a regular algebraic, cuspidal automorphic representation of $\GL_2(\bA_\bQ)$ without CM and satisfying the following additional conditions:
\begin{itemize}
\item For every prime $p$, $\pi_p$ is not supercuspidal.
\item If $p > N+1$ is prime, then $\pi_p$ is unramified.
\item If $p \leq N+1$ is prime, then $\pi_p$ is $M$-regular.
\end{itemize}
We need to show that $\mathrm{BC}_{F / \bQ}(\Sym^{n-1} \pi)$ exists. After replacing $\pi$ by a character twist, we can assume that it is of weight $k \geq 2$. We can also assume, without loss of generality, that $N+1 = p$ is prime and $\pi_p$ is ramified. We can then invoke Corollary \ref{cor_removing_a_prime_of_ramification} to find another automorphic representation $\pi'$ of $\GL_2(\bA_\bQ)$ satisfying the same conditions as $\pi$ but relative to $N$ instead of $N+1$, and moreover such that $\mathrm{BC}_{F / \bQ}(\Sym^{n-1} \pi)$ exists if and only if $\mathrm{BC}_{F / \bQ}(\Sym^{n-1} \pi')$ does. The existence of $\mathrm{BC}_{F / \bQ}(\Sym^{n-1} \pi')$ follows by the induction hypothesis $(H_N)$; and this completes the proof.
\end{proof}

\section{Proof of the main theorem -- general case}

Fix a totally real number field $F$ and integer $n \geq 2$. In this section, we will complete the proof of the following theorem:
\begin{theorem}\label{thm_existence_of_base_change_with_supercuspidal_local_components}
Let $\pi$ be a regular algebraic automorphic representation of $\GL_2(\bA_\bQ)$, without CM. Then $\mathrm{BC}_{F / \bQ}(\Sym^{n-1} \pi)$ exists.
\end{theorem}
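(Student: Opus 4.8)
The plan is to reduce the theorem to the non-supercuspidal case treated in Theorem~\ref{thm_intermediate_step}, at the cost of enlarging the base field by a solvable totally real extension. By Theorem~\ref{thm_intermediate_step} we may assume that the (finite) set $T$ of primes $q$ for which $\pi_q$ is supercuspidal is non-empty. Since the Weil group $W_{\bQ_q}$ is prosolvable, the projective image of the two-dimensional representation $\rec_{\bQ_q}(\pi_q)$ is a finite solvable group, so there is a finite solvable extension $E_q/\bQ_q$ with $\rec_{\bQ_q}(\pi_q)|_{W_{E_q}}$ reducible; equivalently, $\mathrm{BC}_{E_q/\bQ_q}(\pi_q)$ is not supercuspidal. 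First I would construct, by a Grunwald--Wang type argument, a finite solvable Galois extension $\bQ'/\bQ$ of \emph{totally real} fields such that for every $q \in T$ and every place $w \mid q$ of $\bQ'$ the completion $\bQ'_w$ contains a conjugate of $E_q$; the points requiring care are keeping $\bQ'$ (hence also $F' := F\bQ'$) totally real and treating the exceptional supercuspidal types at small residue characteristic. Then $\pi' := \mathrm{BC}_{\bQ'/\bQ}(\pi)$ is cuspidal (by the irreducibility of $r_{\pi,\iota}|_{G_{\bQ'}}$ recalled at the start of \S\ref{sec_preliminaries}), regular algebraic, without CM, and --- since base change of a non-supercuspidal local representation is non-supercuspidal --- has no supercuspidal local component.

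By Lemma~\ref{lem_OK_after_base_change} applied to the solvable extension $F'/F$ of totally real fields, it suffices to produce a polarizable automorphic representation of $\GL_n(\bA_{F'})$ whose associated Galois representation is $\Sym^{n-1} r_{\pi,\iota}|_{G_{F'}}$. Because $r_{\pi',\iota} = r_{\pi,\iota}|_{G_{\bQ'}}$ and base change is transitive, such a representation --- with the correct local components, as discussed in \S\ref{sec_preliminaries} --- is precisely $\mathrm{BC}_{F'/\bQ'}(\Sym^{n-1} \pi')$. The theorem is thus reduced to the following: if $\pi'$ is a cuspidal, regular algebraic, non-CM automorphic representation of $\GL_2(\bA_{\bQ'})$ with no supercuspidal local component, and $F'/\bQ'$ is an extension of totally real fields, then $\mathrm{BC}_{F'/\bQ'}(\Sym^{n-1} \pi')$ exists.

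To prove this I would rerun the argument of Theorem~\ref{thm_intermediate_step} with $\bQ$ replaced throughout by $\bQ'$. The eigenvariety machinery --- definite unitary groups over an arbitrary totally real field, their eigenvarieties and Galois pseudocharacters, the analytic continuation of functoriality results, and the classicality criterion Proposition~\ref{prop:noncritical implies classical} --- is already set up over a general totally real base; the level-raising input Theorem~\ref{thm_congruence_to_n_regular_liftings} (using the level-raising theorems of \cite{Tho22}) and the automorphy input \cite[Theorem~9.1]{Tho22} are likewise available over any totally real field; and the ``remove a prime of ramification'' step --- Proposition~\ref{prop_analytic_continuation}, Lemmas~\ref{lem_removing_ramification_non-ordinary_case} and~\ref{lem_removing_ramification_ordinary_case}, and Corollary~\ref{cor_removing_a_prime_of_ramification} --- together with the induction on $N$, goes through with only notational changes. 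The one genuinely $\bQ$-specific ingredient is the base case of that induction, which over $\bQ$ uses the Buzzard--Kilford description of the $2$-adic, tame level one eigencurve \cite{Buz05}; over $\bQ'$ one must instead produce a starting point intrinsically --- via the automorphic induction construction of Proposition~\ref{prop_existence_of_M_regular_form_with_base_change_symmetric_power} (automorphic induction of an algebraic Hecke character from a CM quadratic extension of $\bQ'$, followed by \cite[Theorem~9.1]{Tho22}) --- and then propagate the conclusion along irreducible components of the eigenvariety using the same ramification-removal lemmas. This is the step I expect to be the main obstacle: setting up the base case of the ``everywhere finite slope'' argument over an arbitrary totally real field, so as to dispense with the appeal to \cite{Buz05}. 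By comparison the solvable base change removing the supercuspidal components, and the descent via Lemma~\ref{lem_OK_after_base_change}, are routine.
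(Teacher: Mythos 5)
Your strategy is genuinely different from the paper's, and it contains a gap you yourself flag but do not resolve. The paper stays over $\bQ$ throughout: it reduces the supercuspidal case to the non-supercuspidal case (Theorem~\ref{thm_intermediate_step}) by building an explicit chain of congruences between automorphic representations of $\GL_2(\bA_\bQ)$ --- first ``seasoning'' $\pi$ (Proposition~\ref{prop_putting_in_a_supercuspidal_prime}) and then inductively removing supercuspidal primes (Proposition~\ref{prop_taking_out_a_supercuspidal_prime}) --- and at each step transfers the conclusion $\mathrm{BC}_{F/\bQ}(\Sym^{n-1}\cdot)$ across the congruence via automorphy lifting theorems (\cite[Theorem 4.2.1]{BLGGT}, \cite[Theorem 2.1]{New21b}, \cite[Theorem 3.5.7]{kis04}, \cite[Theorem 3.3.5]{MR2551765}). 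Lemma~\ref{lem_large_image_over_F} is what makes those lifting theorems applicable over $G_F$ even though the congruence is constructed over $\bQ$, and an extra argument is needed when $p=2$ precisely because that lemma excludes $p=2$. The eigencurve/eigenvariety machinery of Sections 3--4, and in particular the Buzzard--Kilford input, is only invoked in the proof of Theorem~\ref{thm_intermediate_step} and never needs to be rerun over a larger base field.

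Your proposal instead base changes $\pi$ to a solvable totally real $\bQ'$ so that all local components become principal series or Steinberg, and then proposes to rerun the whole of Section 4 with $\bQ$ replaced by $\bQ'$. The solvable Grunwald--Wang step and the descent via Lemma~\ref{lem_OK_after_base_change} are indeed routine. But the sentence ``the ramification-removal lemmas \ldots go through with only notational changes'' and the parenthetical hope that the base case can be handled ``intrinsically'' conceal the real obstruction: Proposition~\ref{prop_base_change_for_level_1_forms} (the base case $(H_2)$) is not merely the \emph{production of one starting point}; it is the statement that \emph{every} everywhere-unramified finite-slope point is connected, through components of the $2$-adic tame level $1$ eigencurve, to a starting point for which the lifting is already known. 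Proposition~\ref{prop_existence_of_M_regular_form_with_base_change_symmetric_power} plus \cite[Theorem 9.1]{Tho22} would give you the starting point over $\bQ'$, but the ramification-removal lemmas (Lemmas~\ref{lem_removing_ramification_non-ordinary_case}, \ref{lem_removing_ramification_ordinary_case}, Corollary~\ref{cor_removing_a_prime_of_ramification}) only move you along a fixed irreducible component; they cannot tell you that an arbitrary unramified form lies on a component reachable from your starting point. That global connectivity is supplied over $\bQ$ by the explicit annulus structure of Buzzard--Kilford and the ``ping pong'' argument of \cite{New21a}, and there is no available analogue over a general totally real $\bQ'$. So your reduction converts the theorem into a strictly harder unsolved problem, which is exactly what the paper's chain-of-congruences argument is designed to avoid.
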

We will prove the theorem, as in \cite{New21b}, by induction on the cardinality of the set $sc(\pi)$ of primes $p$ such that $\pi_p$ is supercuspidal, the base case $sc(\pi) = \emptyset$ having been treated already. The proof we give in the case $n > 2$ will rely on the case $n = 2$ (in which case $\Sym^{n-1} \pi = \pi$) since the `functoriality lifting theorem' proved in \cite{New21b} requires as an assumption the existence of the base change $\mathrm{BC}_{F / \bQ}(\pi)$ in order to be able to conclude the existence of $\mathrm{BC}_{F / \bQ}(\Sym^{n-1} \pi)$.

In \cite{New21b}, Theorem \ref{thm_existence_of_base_change_with_supercuspidal_local_components} in the case $F = \bQ$ is reduced to the case where $\pi$ has no supercuspidal local components by constructing a chain of congruences and using automorphy lifting theorems. Lemma \ref{lem_large_image_over_F} shows that this chain of congruences is still robust after base change to $F$, and as such the arguments here follow those of \cite[\S 3]{New21b} very closely. Only the case $p = 2$ (when Lemma \ref{lem_large_image_over_F} does not apply) needs a small bit of extra care. 

Let $\pi$ be a regular algebraic cuspidal automorphic representation of $\GL_2(\bA_\bQ)$ of weight 2, and such that $2 \not\in sc(\pi)$. Let $q, t, r$ be prime numbers. In \cite[Definition 3.6]{New21b}, we have defined what it means for $\pi$ to be `seasoned' with respect to $(q, t, r)$. This condition depends on $n$ through the inequality $t > \max(10, 8n(n-1))$. Let $N = \max(3, n)$. When we use `seasoned' below, we will mean seasoned with respect to $N$, i.e.\ we are assuming $t > \max(10, 8N(N-1))$. This is imposed because of the following result, which is \cite[Proposition 3.7]{New21b}:
\begin{prop}
Let $\pi$ be seasoned with respect to $(q, t, r)$. Then $q \in sc(\pi)$ and for each prime $p \in sc(\pi)$, there exists an isomorphism $\iota : \overline{\bQ}_p \to \bC$ such that $\overline{r}_{\pi, \iota}(G_\bQ)$ contains a conjugate of $\SL_2(\bF_{p^a})$ for some $p^a > 2N-1$. 
\end{prop}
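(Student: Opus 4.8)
The plan is to obtain the statement by unpacking the definition of \emph{seasoned} in \cite[Definition 3.6]{New21b} and combining it with the classification of finite subgroups of $\GL_2(\overline{\bF}_p)$. First I would isolate the relevant features of that definition. By construction, a $\pi$ seasoned with respect to $(q,t,r)$ has $\pi_q$ supercuspidal --- $q$ being precisely the prime at which the level-raising congruence producing $\pi$ introduces a supercuspidal local component --- so $q \in sc(\pi)$, which gives the first assertion at once. For the second assertion, the pertinent additional conditions are that $2 \notin sc(\pi)$ (so every $p \in sc(\pi)$ is odd), and that at each such $p$ the representation $\pi_p$ is supercuspidal of a prescribed shape: $\rec_{\bQ_p}(\pi_p)$ is induced from a character of a quadratic extension of $\bQ_p$ (every supercuspidal type of $\GL_2$ in odd residue characteristic is of this dihedral form), with enough control on that character --- recorded in the definition --- that its reduction mod $p$ is still nontrivial modulo its $\sigma$-conjugate and of sufficiently large order.

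Now fix $p \in sc(\pi)$. I would choose an isomorphism $\iota : \overline{\bQ}_p \to \bC$ compatible with the local data at $p$ prescribed by the definition of seasoned, so that $\overline{r}_{\pi,\iota}|_{G_{\bQ_p}}$ is absolutely irreducible and its image contains an element of order prime to $p$ which is large enough --- by the seasoning hypotheses --- to force $p^a > 2N-1$ whenever the image meets $\PSL_2(\bF_{p^a})$. In particular $\overline{r}_{\pi,\iota}(G_\bQ)$ is an irreducible finite subgroup of $\GL_2(\overline{\bF}_p)$, so by the classification of such subgroups its image in $\PGL_2(\overline{\bF}_p)$ is either dihedral, isomorphic to one of $A_4$, $S_4$, $A_5$, or contains $\PSL_2(\bF_{p^a})$ for some $a \geq 1$; in the last case the large-order element shows $p^a > 2N-1$, and (after conjugation, using that the determinant of $\overline{r}_{\pi,\iota}$ is a fixed character) $\overline{r}_{\pi,\iota}(G_\bQ)$ contains $\SL_2(\bF_{p^a})$, as desired.

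It remains to eliminate the dihedral and exceptional possibilities, and this is where the auxiliary primes $t$ and $r$ and, in particular, the inequality $t > \max(10, 8N(N-1))$ built into the definition are used. The exceptional groups have projective order at most $60$, which is incompatible with the presence of the large-order element once $N$ is at all sizeable, so they are ruled out directly. The dihedral case is the genuinely delicate point: were $\overline{r}_{\pi,\iota}$ induced from a character of a quadratic field, then $\pi$ would be congruent modulo $p$ to a CM form, and the ramification conditions at $r$ together with the large residual image at $t$ imposed by the definition of seasoned are exactly what preclude this, the verification amounting to reconciling the induced structure with the prescribed local behaviour at $t$ and $r$. I expect this last step to be the main obstacle; granting it, all three bad cases are excluded and the proposition follows. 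Since this is precisely \cite[Proposition 3.7]{New21b}, in the write-up it suffices to cite that result.
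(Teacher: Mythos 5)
The paper offers no proof of this proposition at all: the text simply introduces the definition of ``seasoned'' from \cite[Definition~3.6]{New21b} (with $n$ replaced by $N = \max(3,n)$ in the inequality $t > \max(10, 8N(N-1))$) and then states the proposition as a direct citation of \cite[Proposition~3.7]{New21b}. Your closing observation that citing that result suffices is therefore exactly the intended argument, and your proposal matches the paper.

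Your speculative reconstruction of what the cited proof does is broadly in the right spirit --- the classification of finite subgroups of $\GL_2(\overline{\bF}_p)$, a large-order element coming from the tame supercuspidal at $q$ (of inertial level $t$) to rule out the exceptional projective images and to bound $p^a$ from below using $p^a + 1 \geq t > 8N(N-1)$, and conditions at the auxiliary primes to exclude the dihedral case --- but it guesses more structure into the definition than is there (the definition does not prescribe the shape of $\pi_p$ at \emph{every} $p \in sc(\pi)$; only $\pi_q$ is constrained to be supercuspidal of a specified type, together with conditions at $t$, $r$, and $2 \notin sc(\pi)$), and you acknowledge an unresolved gap in ruling out the dihedral case. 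Since the present paper does not reprove \cite[Proposition~3.7]{New21b} but simply imports it, these imperfections in the reconstruction do not affect the correctness of your answer.
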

Thus $p^a > 5$ (for any $n$). 
\begin{prop}\label{prop_putting_in_a_supercuspidal_prime}
Let $\pi$ be a regular algebraic, cuspidal automorphic representation of $\GL_2(\bA_\bQ)$ which is non-CM. Suppose that $2, 3, \not\in sc(\pi)$. Then we can find a regular algebraic, cuspidal automorphic representation $\pi'$ of $\GL_2(\bA_\bQ)$ with the following properties:
\begin{enumerate}
	\item There exist prime numbers $(q, t, r)$ such that $\pi'$ is seasoned with respect to $(q, t, r)$ and $sc(\pi') = sc(\pi) \cup \{ q  \}$.
	\item $\mathrm{BC}_{F / \bQ}(\Sym^{n-1} \pi)$ exists if and only if $\mathrm{BC}_{F / \bQ}(\Sym^{n-1} \pi')$ does.
\end{enumerate}
\end{prop}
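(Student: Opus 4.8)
The plan is to follow the strategy of \cite[\S3]{New21b}: Theorem \ref{thm_congruence_to_n_regular_liftings} will produce a congruent form $\pi'$ carrying one new supercuspidal prime, and an automorphy lifting theorem --- robust after base change to $F$ thanks to Lemma \ref{lem_large_image_over_F} --- will transport the existence of $\mathrm{BC}_{F/\bQ}(\Sym^{n-1}(\cdot))$ across the congruence. (Recall that $\pi$ has weight $2$ in this section, and that we have fixed $M \geq \max(3, n[F:\bQ])$ and $N = \max(3, n)$.)

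First I would fix auxiliary primes. As $\pi$ is non-CM, for all but finitely many primes $t$ there is an isomorphism $\iota_t : \overline{\bQ}_t \to \bC$ with $\overline{r}_{\pi, \iota_t}(G_\bQ) \supseteq \SL_2(\bF_t)$; I choose such a $t$ which is in addition split in $F$, unramified for $\pi$, and larger than $\max(10, 8N(N-1))$, than $n$, and than every element of $sc(\pi) \cup \{3\}$. By Chebotarev I then choose a large prime $q \notin sc(\pi) \cup \{2,3,t\}$ such that $\overline{r}_{\pi, \iota_t}(\Frob_q)$ has the shape permitting a lift $\rho_q : G_{\bQ_q} \to \GL_2(\overline{\bZ}_t)$ of $\overline{r}_{\pi, \iota_t}|_{G_{\bQ_q}}$, of determinant $\det r_{\pi, \iota_t}|_{G_{\bQ_q}}$, induced from a suitably ramified character of the unramified quadratic extension of $\bQ_q$ --- so that $\pi'_q$ will be supercuspidal of the type demanded by the definition of `seasoned' --- and, similarly, a large prime $r$ (distinct from the above) with $\overline{r}_{\pi, \iota_t}(\Frob_r)$ having eigenvalues in ratio $r$, so that $\overline{r}_{\pi, \iota_t}|_{G_{\bQ_r}}$ admits a lift $\rho_r$ that is an unramified twist of Steinberg. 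For the remaining $l \in S(\pi)$ I put $\rho_l = r_{\pi, \iota_t}|_{G_{\bQ_l}}$.

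I now apply Theorem \ref{thm_congruence_to_n_regular_liftings} with the prime $t$, with $S = S(\pi) \cup \{q, r\}$, with the liftings $(\rho_l)_{l \in S}$ chosen above, and with the integer $M$ (its hypotheses hold: $t > 2$, and $\overline{r}_{\pi, \iota_t}|_{G_{\bQ(\zeta_t)}}$ is irreducible because $\overline{r}_{\pi, \iota_t}(G_\bQ) \supseteq \SL_2(\bF_t)$ with $t \geq 5$). This yields a finite set $Q$ of primes $\equiv 1 \bmod t$ and a cuspidal, regular algebraic $\pi'$ of weight $2$, unramified outside $S \cup Q$, with $\overline{r}_{\pi', \iota_t} \cong \overline{r}_{\pi, \iota_t}$, with $r_{\pi', \iota_t}|_{G_{\bQ_l}} \sim \rho_l$ for $l \in S$, with each $\pi'_{q'}$ ($q' \in Q$) a ramified principal series, and with $\pi'_l$ being $M$-regular at each $l \in S \cup Q$ where it is not supercuspidal. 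By local--global compatibility $\pi'_q$ is supercuspidal of the chosen type, $\pi'_r$ is an unramified twist of Steinberg, and $\pi'_l$ is inertially equivalent to $\pi_l$ for $l \in S(\pi)$; hence $sc(\pi') = sc(\pi) \cup \{q\}$, while the principal series primes in $Q$ are $M$-regular and so cause no difficulty. One then checks against \cite[Definition 3.6]{New21b} that $\pi'$ is seasoned with respect to $(q, t, r)$, which gives (1). For (2): $\pi$ and $\pi'$ are congruent mod $t$, have weight $2$, are unramified at $t$, and $t$ is large, so $\Sym^{n-1} r_{\pi, \iota_t}|_{G_F}$ and $\Sym^{n-1} r_{\pi', \iota_t}|_{G_F}$ are crystalline in the Fontaine--Laffaille range above $t$ with the same residual representation, which by Lemma \ref{lem_large_image_over_F} still has image containing $\SL_2(\bF_t)$ and hence (for $t$ large) is absolutely irreducible and adequate as a representation of $G_F$. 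An automorphy lifting theorem (e.g.\ \cite[Theorem 4.2.1]{BLGGT}, applied after a soluble CM base change and descent as in Lemma \ref{lem_OK_after_base_change}) then shows that $\Sym^{n-1} r_{\pi, \iota_t}|_{G_F}$ is automorphic if and only if $\Sym^{n-1} r_{\pi', \iota_t}|_{G_F}$ is, which by local--global compatibility is exactly the equivalence asserted in (2).

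The main obstacle is confirming that the $\pi'$ output by Theorem \ref{thm_congruence_to_n_regular_liftings} meets every clause of the (elaborate) definition of `seasoned' --- above all, exhibiting by Chebotarev the primes $q$ and $r$ at which the prescribed supercuspidal, respectively Steinberg, local lifting rings of the correct determinant are non-empty, and verifying that the residual representations attached to $\pi'$ at the primes relevant to each $p \in sc(\pi')$ have sufficiently large image. Here the hypothesis $2, 3 \notin sc(\pi)$, together with the freedom to take $q \geq 5$, ensures that every prime in $sc(\pi')$ is $\geq 5$, so that no $p \in \{2,3\}$ exception to Lemma \ref{lem_large_image_over_F} (nor to the relevant adequacy statements) arises here.
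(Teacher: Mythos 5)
Your overall architecture matches the paper's: construct a congruent $\pi'$ with one new supercuspidal prime, then transport the existence of $\mathrm{BC}_{F/\bQ}(\Sym^{n-1}(\cdot))$ across the congruence using an automorphy lifting theorem together with Lemma~\ref{lem_large_image_over_F}. The second step is carried out exactly as in the paper (BLGGT Theorem 4.2.1; large image over $F$ via Lemma~\ref{lem_large_image_over_F}; potential diagonalizability unaffected by base change). Where you diverge is in step one: the paper simply cites ``the first part of the proof of \cite[Proposition 3.9]{New21b}'' to produce a seasoned $\pi'$ with $\overline{r}_{\pi,\iota}\cong\overline{r}_{\pi',\iota}$, whereas you attempt to rebuild that construction from scratch by invoking Theorem~\ref{thm_congruence_to_n_regular_liftings} with an ad hoc choice of auxiliary primes $(q,t,r)$ and local lifts $(\rho_l)_l$.

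That substitution is plausible, and indeed \cite[Proposition 3.9]{New21b} itself runs a congruence theorem of precisely this flavour. But the part you wave through --- ``one then checks \ldots that $\pi'$ is seasoned'' --- is where the real work in \emph{loc.~cit.} happens. The definition of `seasoned' in \cite[Definition 3.6]{New21b} imposes specific arithmetic constraints on the triple $(q,t,r)$ (congruences such as $q\equiv -1 \pmod t$, prescribed behaviour of the determinant, conditions on the inertial type at $q$ having order exactly $t$, etc.) beyond ``$q$ supercuspidal, $r$ Steinberg, $t$ large''; the group-theoretic consequence recorded in the paper's next proposition (large residual image modulo every $p\in sc(\pi')$) is \emph{derived} from these constraints, not assumed. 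Your Chebotarev choices of $q$ and $r$ are stated only loosely (``of the shape permitting a lift\ldots'') and you do not set up the precise congruence conditions, so as written the claim that the output of Theorem~\ref{thm_congruence_to_n_regular_liftings} is seasoned is asserted rather than established. A secondary, smaller point: Theorem~\ref{thm_congruence_to_n_regular_liftings} also forces $\pi'$ to acquire ramified principal series components at the Taylor--Wiles set $Q$; it is true that these do not enlarge $sc(\pi')$, but one needs to check they do not violate any clause of the definition of seasoned either --- this is automatic when one follows \cite[Proposition 3.9]{New21b} but should be said if one re-derives the construction. If you make the Chebotarev conditions and the comparison with Definition 3.6 of \cite{New21b} explicit, your argument closes the gap and is an acceptable, slightly more self-contained, alternative to the paper's citation.

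One expository point: the statement of the Proposition does not explicitly require $\pi$ to have weight $2$, but ``seasoned'' presupposes it; you correctly observe that weight $2$ is in force in this section of the argument, and it would be worth flagging that the weight-$2$ hypothesis is implicitly carried by the conclusion.

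Also, minor: requiring $t$ split in $F$ is not needed for Lemma~\ref{lem_large_image_over_F} or the BLGGT large image hypothesis over $F$; what matters is $t>5$ odd with $\SL_2(\bF_t)$ in the image. It does no harm but is an extraneous constraint.
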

\begin{proof}
Running through the first part of the proof of \cite[Proposition 3.9]{New21b}, we see that we can find a prime $t > \max(10, 8N(N-1),N(\pi))$ and a regular algebraic, cuspidal automorphic representation $\pi'$ of $\GL_2(\bA_\bQ)$, all satisfying the following conditions:
\begin{itemize}
\item There exists an isomorphism $\iota : \overline{\bQ}_t \to \bC$ such that $\operatorname{Proj} \overline{r}_{\pi, \iota}(G_\bQ)$ is conjugate either to $\PSL_2(\bF_t)$ or $\PGL_2(\bF_t)$. 
\item $\pi'$ is seasoned and there is an isomorphism $\overline{r}_{\pi, \iota} \cong \overline{r}_{\pi', \iota}$. 
\end{itemize}
We need to conclude that $\Sym^{n-1} r_{\pi, \iota}|_{G_F}$ is automorphic if and only if $\Sym^{n-1} r_{\pi', \iota}|_{G_F}$ is automorphic. As in the proof of \cite[Proposition 3.9]{New21b}, this follows from the automorphy lifting theorem \cite[Theorem 4.2.1]{BLGGT} -- the large image condition is satisfied over $F$ by Lemma \ref{lem_large_image_over_F} and the potential diagonalizability condition satisfied over $F$ because it is preserved under base extension.
\end{proof}
\begin{prop}\label{prop_taking_out_a_supercuspidal_prime}
Let $\pi$ be a regular algebraic, cuspidal automorphic representation of $\GL_2(\bA_\bQ)$ of weight 2. Suppose that $\pi$ is seasoned with respect to $(q, t, r)$, and let $p \in sc(\pi)$ satisfy $p \geq 5$. Then we can find a regular algebraic, cuspidal automorphic representation $\pi'$ of $\GL_2(\bA_\bQ)$ with the following properties:
\begin{enumerate}
\item $\pi'$ has weight 2 and is non-CM.
\item $sc(\pi') = sc(\pi) - \{ p \}$. If $p \neq q$, then $\pi'$ is seasoned with respect to $(q, t, r)$.
\item If $\mathrm{BC}_{F / \bQ}(\Sym^{n-1} \pi')$ exists, then so does $\mathrm{BC}_{F / \bQ}(\Sym^{n-1} \pi)$. 
\end{enumerate} 
\end{prop}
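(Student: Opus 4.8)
The plan is to adapt the proof of the analogous statement over $\bQ$ in \cite[\S3]{New21b}: one first produces, by a congruence argument over $\bQ$, a form $\pi'$ with $\pi'_p$ not supercuspidal and the correct local behaviour elsewhere, and then transfers the implication about symmetric power base change along the congruence by means of an automorphy lifting theorem. The only place the base field $F$ enters is in verifying the hypotheses of that lifting theorem over $F$, which is exactly what Lemma \ref{lem_large_image_over_F} provides; this is the one genuinely new ingredient compared with the case $F = \bQ$.

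First I would record the residual picture. Since $\pi$ is seasoned with respect to $(q,t,r)$ and $p \in sc(\pi)$, the quoted result \cite[Proposition 3.7]{New21b} supplies an isomorphism $\iota : \overline{\bQ}_p \to \bC$ such that $\overline{r}_{\pi,\iota}(G_\bQ)$ contains a conjugate of $\SL_2(\bF_{p^a})$ for some $p^a > 2N-1 \geq 5$. Because $p \geq 5$ is odd, Lemma \ref{lem_large_image_over_F} then shows that $\overline{r}_{\pi,\iota}(G_F)$ also contains a conjugate of $\SL_2(\bF_{p^a})$. Since $\SL_2(\bF_{p^a})$ is perfect, the same holds for $\overline{r}_{\pi,\iota}(G_{F(\zeta_p)})$; hence $\overline{r}_{\pi,\iota}|_{G_{F(\zeta_p)}}$ is absolutely irreducible and $\Sym^{n-1}\overline{r}_{\pi,\iota}|_{G_F}$ has adequate image --- the image group being identical to the one occurring in the case $F = \bQ$, adequacy follows from the same argument used there in \cite{New21b}.

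Next I would construct $\pi'$ over $\bQ$ following \cite[\S3]{New21b}: set up a fixed-determinant global deformation problem for $\overline{r}_{\pi,\iota}$, imposing at $p$ a weight-$2$, potentially Barsotti--Tate condition whose local deformation ring has a component on which the inertial type is non-supercuspidal, minimal conditions at the primes of $sc(\pi) - \{p\}$, and --- when $p \neq q$ --- conditions at $q$, $t$ and $r$ forcing the seasoned property to persist; then run the Taylor--Wiles--Kisin patching argument, using the level-changing results of \cite{Tho22} where needed, to show that the patched module meets the chosen non-supercuspidal component at $p$. This yields a regular algebraic, cuspidal, non-CM $\pi'$ of weight $2$ with $\overline{r}_{\pi',\iota} \cong \overline{r}_{\pi,\iota}$, with $\pi'_p$ not supercuspidal, with $sc(\pi') = sc(\pi) - \{p\}$, and seasoned with respect to $(q,t,r)$ when $p \neq q$. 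Nothing in this step refers to $F$, so it is a direct transcription of \emph{loc.~cit.}

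Finally, to deduce (3), I would compare $\Sym^{n-1} r_{\pi,\iota}|_{G_F}$ and $\Sym^{n-1} r_{\pi',\iota}|_{G_F}$. These are polarizable, residually isomorphic (both reduce to $\Sym^{n-1}\overline{r}_{\pi,\iota}|_{G_F}$, which has adequate image by the second paragraph), have the same Hodge--Tate weights, and are potentially diagonalizable at the places above $p$: the local representations at $p$ are potentially Barsotti--Tate since $\pi$ and $\pi'$ have weight $2$, and potential diagonalizability is preserved under restriction to $G_F$, just as in the proof of Proposition \ref{prop_putting_in_a_supercuspidal_prime}. Hence, if $\mathrm{BC}_{F/\bQ}(\Sym^{n-1}\pi')$ exists, the automorphy lifting theorem \cite[Theorem 4.2.1]{BLGGT} applies and shows that $\Sym^{n-1} r_{\pi,\iota}|_{G_F}$ is automorphic, i.e.\ that $\mathrm{BC}_{F/\bQ}(\Sym^{n-1}\pi)$ exists. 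The only step that is not a direct transcription of \cite[\S3]{New21b} is the verification of the large-image and adequacy hypotheses of this lifting theorem over $F$; by design this is exactly the content of Lemma \ref{lem_large_image_over_F}, and it is where the hypothesis $p \geq 5$ is used. (The technically substantive part of the whole argument --- the patching construction of $\pi'$ in the third paragraph --- is imported unchanged from \emph{loc.~cit.})
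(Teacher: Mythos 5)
Your first two paragraphs match the paper's approach: you invoke the seasoned condition via \cite[Proposition 3.7]{New21b} to obtain the residual large image, apply Lemma \ref{lem_large_image_over_F} to transfer it to $G_F$, and then transcribe the construction of $\pi'$ from \cite[Proposition 3.10]{New21b} (though you omit recording that $\pi'$ is \emph{not} $\iota$-ordinary, which the paper lists explicitly and which is needed below). The genuine gap is in your final paragraph, where you apply \cite[Theorem 4.2.1]{BLGGT}. You assert that the relevant local representations at places above $p$ are potentially diagonalizable ``since $\pi$ and $\pi'$ have weight $2$''; but the representations that BLGGT must be fed are $\Sym^{n-1} r_{\pi,\iota}|_{G_{F_w}}$ and $\Sym^{n-1} r_{\pi',\iota}|_{G_{F_w}}$, of rank $n$ with Hodge--Tate weights $0,1,\dots,n-1$, not the rank-$2$ potentially Barsotti--Tate representations you have in mind. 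For $n>2$, the $(n-1)$th symmetric power of a non-ordinary potentially Barsotti--Tate rank-$2$ representation is not Barsotti--Tate and is not known to be potentially diagonalizable; the argument you borrow from Proposition \ref{prop_putting_in_a_supercuspidal_prime} worked there only because one could choose $t$ large enough that the local representation is Fontaine--Laffaille, and that escape is unavailable here precisely because $p \in sc(\pi)$. This is exactly why the paper does not use BLGGT at this step: for $n=2$ it cites Kisin's potentially Barsotti--Tate modularity theorem \cite[Theorem 3.5.7]{kis04}, and for $n>2$ it appeals to the purpose-built functoriality lifting theorem \cite[Theorem 2.1]{New21b}, which bypasses the potential-diagonalizability problem but in exchange requires the prior existence of $\mathrm{BC}_{F/\bQ}(\pi)$ (supplied by the $n=2$ case of the theorem, which is established first and assumed in the $n>2$ argument). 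Your proposal does not surface this inductive dependence at all, and its stated justification for potential diagonalizability would fail for every $n>2$.
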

\begin{proof}
The proof of \cite[Proposition 3.10]{New21b} shows that we can find an isomorphism $\iota : \overline{\bQ}_p \to \bC$ and a regular algebraic, cuspidal automorphic representation $\pi'$ of $\GL_2(\bA_\bQ)$ with the following properties:
\begin{itemize}
\item The image of $\overline{r}_{\pi, \iota}$ contains a conjugate of $\SL_2(\bF_{p^a})$ for some $p^a > 2N-1$.
\item $\pi'$ has weight 2 and there is an isomorphism $\overline{r}_{\pi, \iota} \cong \overline{r}_{\pi', \iota}$. In particular, $\pi'$ is non-CM.
\item If $p \neq q$, then $\pi'$ is seasoned with respect to $(q, t, r)$. In any case, $sc(\pi') = sc(\pi) - \{ p \}$.
\item For each prime number $l \neq p$, we have $r_{\pi, \iota}|_{G_{\bQ_l}} \sim r_{\pi', \iota}|_{G_{\bQ_l}}$. Moreover, $\pi'$ is not $\iota$-ordinary. 
\end{itemize} 
If $n = 2$, then \cite[Theorem 3.5.7]{kis04} shows that if $r_{\pi', \iota}|_{G_F}$ is automorphic, then so is $r_{\pi, \iota}|_{G_F}$ (and then $\mathrm{BC}_{F / \bQ}(\pi)$ exists). If $n > 2$, then we can instead invoke \cite[Theorem 2.1]{New21b}. (Recall that if $n > 2$ then we are assuming the truth of Theorem \ref{thm_existence_of_base_change_with_supercuspidal_local_components} in the case $n = 2$, so the cited result does apply.) 
\end{proof}
\begin{prop}
Let $\pi$ be a regular algebraic, cuspidal automorphic representation of $\GL_2(\bA_\bQ)$ without CM, and such that $2, 3 \not\in sc(\pi)$. Then $\mathrm{BC}_{F / \bQ}(\Sym^{n-1} \pi)$ exists.
\end{prop}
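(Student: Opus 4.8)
The plan is to argue by induction on $|sc(\pi)|$, along the lines announced at the start of this section. The base case $sc(\pi) = \emptyset$ says that $\pi_p$ is not supercuspidal for any prime $p$, so Theorem \ref{thm_intermediate_step} immediately yields the existence of $\mathrm{BC}_{F/\bQ}(\Sym^{n-1}\pi)$. For the inductive step the idea is to replace $\pi$ by a non-CM representation with strictly fewer supercuspidal primes while keeping track of whether $\mathrm{BC}_{F/\bQ}(\Sym^{n-1}(-))$ exists; this is exactly what Propositions \ref{prop_putting_in_a_supercuspidal_prime} and \ref{prop_taking_out_a_supercuspidal_prime} are for, the one structural point being that Proposition \ref{prop_taking_out_a_supercuspidal_prime} applies only to \emph{seasoned} representations of weight $2$, so one must first manoeuvre into that situation even though it temporarily increases the number of supercuspidal primes.

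Concretely, suppose $sc(\pi) \neq \emptyset$ and fix $p_0 \in sc(\pi)$; since $2, 3 \notin sc(\pi)$ we have $p_0 \geq 5$. First I would apply Proposition \ref{prop_putting_in_a_supercuspidal_prime} to obtain primes $(q,t,r)$ and a non-CM weight-$2$ representation $\pi_1$, seasoned with respect to $(q,t,r)$, with $sc(\pi_1) = sc(\pi) \cup \{q\}$ and with $\mathrm{BC}_{F/\bQ}(\Sym^{n-1}\pi)$ existing if and only if $\mathrm{BC}_{F/\bQ}(\Sym^{n-1}\pi_1)$ does; here the level-raising prime $q$ lies outside $sc(\pi)$ and satisfies $q \geq 5$ by construction, so in particular $2, 3 \notin sc(\pi_1)$. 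Then I would apply Proposition \ref{prop_taking_out_a_supercuspidal_prime} twice: first to remove $p_0$ from $\pi_1$ (legitimate since $p_0 \in sc(\pi_1)$, $p_0 \neq q$ and $p_0 \geq 5$), producing a non-CM weight-$2$ representation $\pi_2$ still seasoned with respect to $(q,t,r)$ and with $sc(\pi_2) = (sc(\pi) - \{p_0\}) \cup \{q\}$; and then to remove $q$ from $\pi_2$ (legitimate since $\pi_2$ is still seasoned with respect to $(q,t,r)$, $q \in sc(\pi_2)$ and $q \geq 5$), producing a non-CM weight-$2$ representation $\pi_3$ with $sc(\pi_3) = sc(\pi) - \{p_0\}$. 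Each of these two steps provides the implication that existence of $\mathrm{BC}_{F/\bQ}(\Sym^{n-1}(-))$ for the newer form forces it for the older one. Since $sc(\pi_3) \subsetneq sc(\pi)$, we have $2, 3 \notin sc(\pi_3)$ and $|sc(\pi_3)| < |sc(\pi)|$, so the inductive hypothesis gives $\mathrm{BC}_{F/\bQ}(\Sym^{n-1}\pi_3)$; chaining the implications back through $\pi_2$ and $\pi_1$ then yields $\mathrm{BC}_{F/\bQ}(\Sym^{n-1}\pi)$, completing the induction.

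The genuinely substantial input is packaged inside the two cited propositions — the notion of being seasoned, the level-raising theorems of \cite{Tho22}, and the automorphy lifting theorems of \cite{BLGGT}, \cite{kis04} and \cite{New21b} — so the remaining task is just to arrange the bookkeeping so that those propositions apply at each step. I expect this bookkeeping to be the delicate (though ultimately routine) part: one needs the auxiliary prime $q$ introduced by Proposition \ref{prop_putting_in_a_supercuspidal_prime} to be $\geq 5$ so that it can itself later be removed, one needs the weight to remain $2$ and the non-CM property to persist, and one needs the hypothesis $2, 3 \notin sc$ to be inherited by $\pi_1, \pi_2, \pi_3$ so that the induction and the intermediate applications are legitimate. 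For $n > 2$, the use of Proposition \ref{prop_taking_out_a_supercuspidal_prime} implicitly invokes the $n = 2$ case of Theorem \ref{thm_existence_of_base_change_with_supercuspidal_local_components} — in order to know that $\mathrm{BC}_{F/\bQ}(\pi_i)$ exists, which is what the functoriality lifting theorem \cite[Theorem 2.1]{New21b} requires as input — but that dependence is already folded into the statement of that proposition, so it costs nothing extra here.
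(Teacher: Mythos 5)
Your proof is correct and takes the same approach as the paper, which for this proposition simply refers to the proof of \cite[Proposition 3.11]{New21b} --- that proof is exactly the induction on $|sc(\pi)|$ you describe, using Proposition \ref{prop_putting_in_a_supercuspidal_prime} to season and then Proposition \ref{prop_taking_out_a_supercuspidal_prime} twice (once to remove $p_0$ while remaining seasoned, once to remove the auxiliary prime $q$). Your bookkeeping is what makes the induction legitimate, and it is right: $q \geq 5$ because the seasoned condition forces $q$ to be much larger than $t > 10$, the weight stays $2$ and the non-CM property persists by the conclusions of the two cited propositions, and $2,3 \notin sc(\pi_3)$ since $sc(\pi_3) \subsetneq sc(\pi)$.
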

\begin{proof}
The proof is identical to the proof of \cite[Proposition 3.11]{New21b}, using Proposition \ref{prop_putting_in_a_supercuspidal_prime} and Proposition \ref{prop_taking_out_a_supercuspidal_prime}.
\end{proof}
We now finish the proof of Theorem \ref{thm_existence_of_base_change_with_supercuspidal_local_components}.
\begin{proof}[Proof of Theorem \ref{thm_existence_of_base_change_with_supercuspidal_local_components}]
The proof is again quite similar to the proof of \cite[Theorem 3.1]{New21b}. We first treat the case where $\pi$ has weight 2 and $2 \not\in sc(\pi)$. In this case we can repeat the argument of \emph{loc. cit.}, replacing the use of \cite[Theorem 2.1]{New21b} by \cite[Theorem 3.5.7]{kis04} in the case $ n  = 2$, in order to conclude that $\mathrm{BC}_{F / \bQ}(\Sym^{n-1} \pi)$ exists. 

The next case is when $\pi$ has weight 2 and $2 \in sc(\pi)$. Here we need to be slightly more careful since Lemma \ref{lem_large_image_over_F} does not apply when $p = 2$. Choose a prime $t > \max(4N(N-1), N(\pi), [ F : \bQ])$ such that $t \equiv 1 \text{ mod }4$ and there is an isomorphism $\iota_t : \overline{\bQ}_t \to \bC$ such that $\operatorname{Proj} \overline{r}_{\pi, \iota}(G_\bQ)$ is conjugate either to $\PSL_2(\bF_t)$ or $\PGL_2(\bF_t)$. Choose a prime $q > N(\pi)$ satisfying the following conditions:
\begin{itemize}
\item $q \equiv -1 \text{ mod } t$, $q \equiv 1 \text{ mod }8$, $q \equiv 1 \text{ mod }l$ for every prime $l < t$.
\item $\overline{r}_{\pi, \iota_t}(\Frob_q)$ is in the conjugacy class of complex conjugation.
\end{itemize} 
We can then find another regular algebraic, cuspidal automorphic representation $\pi'$ of $\GL_2(\bA_\bQ)$ with the following properties:
\begin{itemize}
\item $\overline{r}_{\pi, \iota_t} \cong \overline{r}_{\pi', \iota_t}$.
\item For each prime $l \neq q$, $r_{\pi, \iota_t}|_{G_{\bQ_l}} \sim r_{\pi', \iota_t}|_{G_{\bQ_l}}$.
\item There is an isomorphism $\rec_{\bQ_q}(\pi'_q) \cong \Ind_{W_{\bQ_{q^2}}}^{W_{\bQ_q}} \chi$, where $\chi : W_{\bQ_{q^2}} \to \bC^\times$ is a character such that $\chi|_{I_{\bQ_q}}$ has order $t$. 
\end{itemize} 
Then Lemma \ref{lem_large_image_over_F} and \cite[Theorem 4.2.1]{BLGGT} together show that $\mathrm{BC}_{F / \bQ}(\Sym^{n-1} \pi)$ exists if and only if $\mathrm{BC}_{F / \bQ}(\Sym^{n-1} \pi')$ does. 

Let $\iota : \overline{\bQ}_2 \to \bC$ be an isomorphism. Then \cite[Lemma 6.3]{MR2551763} implies that the image of $\operatorname{Proj} \overline{r}_{\pi', \iota}$ is conjugate to $\PGL_2(\bF_{2^a})$ for some $2^a > 4$. Moreover, this image contains an element of order $t$. We deduce that $2^a > 2n-1$ (this is one of the hypotheses of \cite[Theorem 2.1]{New21b}, which we will be applying). By \cite[Lemma 3.4]{New21b}, we can find another regular algebraic, cuspidal automorphic representation $\pi''$ of $\GL_2(\bA_\bQ)$ such that the following conditions are satisfied:
\begin{itemize}
\item There is an isomorphism $\overline{r}_{\pi', \iota} \cong \overline{r}_{\pi'', \iota}$.
\item $\pi''$ has weight 2 and is not $\iota$-ordinary. The representation $\pi''_2$ is not supercuspidal.
\item For each prime $l$, $\pi''_l$ is a twist of the Steinberg representation if and only if $\pi'_l$ is. 
\end{itemize}
We observe that $\operatorname{Proj} \overline{r}_{\pi'', \iota}(G_F) = \operatorname{Proj} \overline{r}_{\pi'', \iota}(G_\bQ)$. Indeed, let $\widetilde{F}$ denote the Galois closure of $F / \bQ$, and let $K / \bQ$ denote the extension cut out by $\operatorname{Proj} \overline{r}_{\pi'', \iota}$. It suffices to check that $K \cap \widetilde{F} = \bQ$. However, $\Gal(K / \bQ)$ is a simple group of order divisible by $t$, while $\Gal(K \cap \widetilde{F} / \bQ)$ has order prime to $t$ (because of our assumption $t > [F : \bQ]$). 

If $n = 2$, then we have already show that $\mathrm{BC}_{F / \bQ}(\pi'')$ exists. Now \cite[Theorem 3.3.5]{MR2551765} implies that $r_{\pi', \iota}|_{G_F}$ is automorphic, hence that $\mathrm{BC}_{F / \bQ}(\pi)$ exists. If $n > 2$, then we apply \cite[Theorem 2.1]{New21b} to conclude that $\mathrm{BC}_{F / \bQ}(\Sym^{n-1} \pi)$ exists.

It remains finally to treat the case that $\pi$ has weight $k > 2$. The argument of the final paragraph of the proof of \cite[Theorem 3.1]{New21b} now applies to reduce this case to the one already treated. 
\end{proof}
%

\renewcommand{\MR}[1]{}

\end{document}